\newtheorem{thm}{Theorem}[section]  
\newtheorem{lemma}[thm]{Lemma}
\newtheorem{proposition}[thm]{Proposition}
\newtheorem{corollary}[thm]{Corollary}
\newtheorem{claim}{Claim}[thm]
\theoremstyle{definition}
\newtheorem{remark}[thm]{Remark}
\newtheorem{question}[thm]{Question}
  \newtheorem{definition-remark}[thm]{Definition-Remark}
 \newtheorem{definition}[thm]{Definition}
\newtheorem*{example1*}{Example: The case $\phi_1=1$}
\newtheorem*{example2*}{Example: The case $\phi_1=2$}
\newtheorem*{example3*}{Example: The case $\phi_1=3$}
\def\ker{\operatorname{ker}}
\def\min{\operatorname{min}}
\def\max{\operatorname{max}}
\def\c1{\operatorname{c_1}}
\def\c2{\operatorname{c_2}}
\def\Sym{\operatorname{Sym}}
\def\ZZ{{\mathbb Z}}
\def\QQ{{\mathbb Q}}
\def\PP{{\mathbb P}}
\def\A{{\mathcal A}}
\def\B{{\mathcal B}}
\def\D{{\mathcal D}}
\def\DD{{\mathbb D}}
\def\L{{\mathcal L}}
\def\M{{\mathcal M}}
\def\N{{\mathcal N}}
\def\O{{\mathcal O}}
\def\I{{\mathcal J}}
\def\E{{\mathcal E}}
\def\T{{\mathcal T}}
\def\e{\mathfrak{e}}
\def\s{\mathfrak{s}}
\def\f{\mathfrak{f}}
\def\c{\mathfrak{c}}
\def\x{\times}                   
\def\cong{\simeq}
\def\+{\oplus}               
\def\*{\otimes}                  
\def\Bl{\operatorname{Bl}}
\def\Shom{\operatorname{ \mathfrak{h}\mathfrak{o}\mathfrak{m} }}
\def\Shext{\operatorname{ \mathfrak{e}\mathfrak{x}\mathfrak{t} }}
\def\Pic{\operatorname{Pic}}
\def\Num{\operatorname{Num}}
\def\Bl{\operatorname{Bl}}
\begin{document}

\title{On moduli spaces of polarized Enriques surfaces}

\author{Andreas Leopold Knutsen}
\address{Andreas Leopold Knutsen, Department of Mathematics, University of Bergen, Postboks 7800,
5020 Bergen, Norway}
\email{andreas.knutsen@math.uib.no}

\date{\today}


\begin{abstract}  
We prove that, for any $g \geq 2$, the {\'e}tale double cover $\rho_g:\E_{g} \to
\widehat{\E}_{g}$ from the moduli space $\E_{g}$ of complex polarized genus $g$ Enriques surfaces to the moduli space $\widehat{\E}_{g}$ of numerically polarized genus $g$ Enriques surfaces is disconnected precisely over irreducible components of $\widehat{\E}_{g}$ parametrizing $2$-divisible classes, answering a question of Gritsenko and Hulek \cite{GrHu}. We characterize all irreducible components of $\E_{g}$ in terms of a new invariant of line bundles on Enriques surfaces that generalizes the $\phi$-invariant introduced by Cossec \cite{cos2}. In particular, we get a one-to-one correspondence between the irreducible components of $\E_g$ and $11$-tuples of integers satisfying particular conditions. This makes it possible, in principle, to list all irreducible components of $\E_g$ for each $g \geq 2$.
\end{abstract}

\maketitle

\section{Introduction}

For any integer $g \geq 2$, let 
$\E_{g}$ (resp., $\widehat{\E}_{g}$) denote the moduli
space of complex polarized (resp. numerically polarized) 
Enriques surfaces $(S,L)$ (resp. $(S,[L])$) 
of {\it (sectional) genus} $g$, that is, such that 
$L^2=2g-2$. (Thus, $g$ is the arithmetic genus of all curves in the linear system
$|L|$.) The moduli spaces $\E_{g}$ exist as quasi-projective varieties by Viehweg's theory, cf. \cite[Thm. 1.13]{Vie}. The moduli spaces $\widehat{\E}_{g}$ exist by results of Gritsenko and Hulek \cite{GrHu}; more precisely,
for each orbit  $\mathfrak{h}$ of the
action of the orthogonal group in the {\it Enriques lattice} $\mathfrak{N}:=U \+ E_8(-1)$ (see  \cite[Lemma VIII.15.1]{BHPV}),
there is an irreducible moduli space $\M^a_{\tiny{\mbox{En}},\mathfrak{h}}$ parametrizing isomorphism classes of numerically polarized Enriques surfaces $(S,[L])$ with $[L]$ in the orbit $ \mathfrak{h} \subset \mathfrak{N} \cong \Num S$. 
The space $\widehat{\E}_{g}$ in our notation is thus  the union of all $\M^a_{\tiny{\mbox{En}},\mathfrak{h}}$ where $\mathfrak{h}$ varies over all orbits 
with $\mathfrak{h}^2=2g-2$. It follows from \cite[Prop. 4.1]{GrHu} that there is an {\'e}tale double cover $\rho_g: \E_{g} \to
\widehat{\E}_{g}$ identifying 
$(S,L)$ and $(S,L+K_S)$.
Note that in general the spaces $\E_{g}$ and $\widehat{\E}_{g}$ have many  irreducible components.

In this paper we answer the following fundamental questions:
\begin{itemize}
\item[(1)] Given an irreducible component of $\widehat{\E}_{g}$, is 
its inverse image by $\rho_g$  irreducible or not 
(cf. \cite[Question 4.2]{GrHu})? 
\item[(2)] How can one determine {\it all} the irreducible components of $\E_{g}$?

\end{itemize}

Regarding the first question, for each irreducible component $\widehat{\E}'$ of $\widehat{\E}_{g}$ either $\rho_g^{-1} \widehat{\E}'$ is  irreducible or it consists of two disjoint components, according to whether $(S,L)$ and $(S,L+K_S)$ lie in the same component of $\E_{g}$ or not for $(S,[L]) \in \widehat{\E}'$. We will prove:

\begin{thm} \label{mainthm1}
  Let $\widehat{\E}'$  be an irreducible component of $\widehat{\E}_{g}$.
Then $\rho_g^{-1} (\widehat{\E}')$ is reducible if and only if $\widehat{\E}'$ parametrizes pairs $(S,[L])$ such that $[L]$ is $2$-divisible in $\Num S$.
\end{thm}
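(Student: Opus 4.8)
The plan is to reduce the statement to an explicit period-domain / orthogonal-group computation, exploiting that an Enriques surface $S$ has canonical bundle $K_S$ of order $2$ in $\Pic S$, with $\Num S \cong \mathfrak{N}$ the torsion-free quotient $\Pic S / \ZZ K_S$. Fix $(S,[L]) \in \widehat{\E}'$. The fiber $\rho_g^{-1}([L])$ consists of the two polarizations $L$ and $L + K_S$ lying over $[L]$, and $\rho_g^{-1}(\widehat{\E}')$ is irreducible precisely when these two points lie in the same irreducible component of $\E_g$, i.e.\ when there is a family of polarized Enriques surfaces connecting $(S,L)$ to $(S,L+K_S)$. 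So I want a monodromy/deformation criterion: $(S,L)$ and $(S,L+K_S)$ are deformation-equivalent as polarized surfaces if and only if there is an isometry $g$ of $\Pic S$ (equivalently, an automorphism realized by parallel transport in a family, since for Enriques surfaces every such isometry of $H^2$ is induced by a diffeomorphism, and the relevant period domains are connected) with $g(L) = L + K_S$ and $g$ fixing the numerical class, i.e.\ inducing the identity (or at least a monodromy-admissible isometry) on $\Num S$. The key point is then purely lattice-theoretic: such a $g$ exists if and only if $[L]$ is \emph{not} $2$-divisible in $\Num S$.

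The main lattice lemma I would isolate is the following. Write $\pi : \Pic S \twoheadrightarrow \Num S$ for the projection with kernel $\ZZ K_S$. Given a class $[L] \in \Num S$ with $[L]^2 = 2g-2$, the two lifts are $L$ and $L + K_S$. I claim there is an isometry of $\Pic S$ that is the identity on $\Num S$ and swaps the two lifts iff $[L]$ is not $2$-divisible. Concretely, isometries of $\Pic S$ inducing the identity on $\Num S$ are exactly the maps $v \mapsto v + (v \cdot w)\,K_S$ for suitable $w$ — but since $K_S$ is $2$-torsion this is only well-defined modulo $2$, so one works with the associated "$\ZZ/2$-valued" functional. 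Such a map sends $L$ to $L + K_S$ iff $L \cdot w$ is odd for the corresponding $w$, and the obstruction to finding such $w$ is precisely that $[L]$ reduces to $0$ in $\Num S \otimes \ZZ/2$, i.e.\ $[L]$ is $2$-divisible modulo $2$-torsion, which on the unimodular-up-to-the-$U$-summand lattice $\mathfrak{N}$ is equivalent to being $2$-divisible in $\Num S$. I would prove both directions: if $[L]$ is not $2$-divisible, pick $w$ with $[L]\cdot w$ odd (possible since $\mathfrak{N}$ is unimodular, hence the mod-$2$ reduction pairing is nondegenerate) and check the resulting isometry is realized by monodromy in $\E_g$ (this uses surjectivity of the period map and connectedness of the relevant orthogonal group orbit, cf.\ the Gritsenko–Hulek description of $\widehat\E_g$); conversely, if $[L]$ \emph{is} $2$-divisible, show no deformation can connect $(S,L)$ and $(S,L+K_S)$ because the difference $L - (L+K_S) = -K_S$ would have to be transported to a class differing by a $2$-divisible correction, producing a contradiction with the invariance of the divisibility-refined invariant (the generalized $\phi$-invariant of the introduction) along the component.

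Concretely I would proceed in these steps: (i) recall from \cite{GrHu} and the period theory that two polarized Enriques surfaces are in the same component of $\E_g$ iff their periods lie in the same connected component of the relevant arithmetic quotient, which is governed by the $O(\mathfrak{N})$-orbit of the polarization class \emph{together with} the finer lift data in $\Pic S$; (ii) translate "$(S,L)$ and $(S,L+K_S)$ in the same component" into existence of the lattice isometry $g$ above; (iii) prove the lattice lemma relating existence of $g$ to non-$2$-divisibility of $[L]$, using unimodularity of $\mathfrak{N}$ for the "only if" obstruction and explicit construction for the "if"; (iv) upgrade the abstract isometry to an actual monodromy operator using surjectivity of the period map for Enriques surfaces and the fact that the period domain (a type IV domain) minus discriminant is connected, so every admissible isometry is realized by a loop in the moduli space; (v) conclude. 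The main obstacle is step (iv): one must ensure the lattice isometry constructed in (iii) actually lies in the monodromy group of the \emph{polarized} moduli problem (not merely of unpolarized Enriques surfaces) and that it genuinely connects the two points rather than merely permuting components abstractly — this requires care with the marking of $\Pic S$ versus $\Num S$ and with which isometries extend over the nodal (discriminant) locus, and is where the $2$-divisibility dichotomy is really forced.
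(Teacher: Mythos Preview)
Your approach is genuinely different from the paper's, and the lattice computation in your step (iii) is correct, but step (iv) is a real gap that your sketch does not close. The automorphisms of $\Pic S$ inducing the identity on $\Num S$ are indeed the maps $v\mapsto v+([v]\cdot w)K_S$ with $w\in\Num S$, and by unimodularity of $\mathfrak{N}$ such a map swaps $L$ with $L+K_S$ precisely when $[L]\notin 2\Num S$. The easy direction then follows: if $[L]=2[M]$ then, say, $L=2M$ in $\Pic S$, and for \emph{any} isometry $g$ of $\Pic S$ fixing $[L]$ one has $g(L)=2g(M)$, which cannot equal $2M+K_S$ since $K_S$ is not $2$-divisible; hence no monodromy can swap the lifts. (This is essentially \cite[Lemma~4.8]{cdgk}, which the paper also invokes.) But for the hard direction you have only produced a \emph{candidate} swap and must still show it lies in the monodromy of the universal family over $\widehat{\E}'$. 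Your justification --- surjectivity of the period map and connectedness of the period domain --- does not give this: the period theory for Enriques surfaces is set up via the $K3$ cover and sees $\Num S$ (as the invariant sublattice of $H^2$ of the cover), not the extension $0\to\ZZ K_S\to\Pic S\to\Num S\to 0$. Knowing the arithmetic group on the period domain therefore controls the monodromy action on $\Num S$, not on the $\ZZ/2$-torsion in $\Pic S$. This is exactly the question Gritsenko and Hulek left open \cite[Question~4.2]{GrHu}, precisely because their lattice/period methods did not settle it; your proposal is attempting to push those same methods through without new input.

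The paper sidesteps the abstract monodromy computation by exhibiting an explicit geometric loop. It degenerates $S$ to a semistable surface $X=\widetilde{R}\cup_T\widetilde{P}$ with $\widetilde{R}$ birational to $\Sym^2(E)$ for an elliptic curve $E$ and $\widetilde{P}$ rational, shows that $\varphi_L(X)$ is a smooth Hilbert-scheme point whose general deformation is a smooth Enriques surface (Theorem~\ref{thm:hilbert}), and then --- using non-$2$-divisibility to force an odd coefficient in a suitable isotropic decomposition (Lemma~\ref{lemma:sid}) --- builds a flat family over $E$ in which $X$ returns to itself while $L$ is carried to $L+K_X$ (Proposition~\ref{prop:main}). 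Smoothing back to Enriques surfaces (Corollary~\ref{cor:piclimite}) places $(S_t,L_t)$ and $(S_t,L_t+K_{S_t})$ in the same Hilbert-scheme component, hence the same component of $\E_g$. This explicit family is the geometric content your step (iv) is missing.
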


We remark that a much weaker version of this theorem was obtained in \cite[Cor. 1.5]{cdgk}, with a completely different approach. 

Regarding question (2) above, one can start by 
fixing another fundamental invariant in addition to the genus, namely the {\it $\phi$-invariant}
\begin{equation} \label{eq:defphi}
\phi(L):=\min \left\{E \cdot L \; | \; E^2=0, E > 0\right\} \in \ZZ_+,
\end{equation}
introduced by Cossec \cite{cos2}, which has interesting geometrical interpretations, cf., e.g., \cite{cd, KLpn, kn-man,Sz}. Then one may, as in \cite{cdgk}, consider the moduli spaces $\E_{g,\phi}$ and $\widehat{\E}_{g,\phi}$ parametrizing pairs with
$L^2=2g-2$ and $\phi(L)=\phi$, which in general still have many different irreducible components. Also recall that not all possible pairs $(g,\phi)$ occur; for instance it is known that 
$\phi^2 \leqslant 2g-2$
by \cite[Cor. 2.7.1]{cd}, and that there  are no cases satisfying $\phi^2 < 2g-2< \phi^2 +\phi-2$ by \cite[Prop. 1.4]{KL1}, but a complete classification of all possible pairs $(g,\phi)$ is still missing.

Some irreducibility results have been known in low genus for a while; for instance $\E_{3,2}$, $\E_{4,2}$ and $\E_{6,3}$ are irreducible, see
\cite{Ca}, \cite[\S 3]{dol} and \cite{Ve}. 
In \cite{cdgk} all irreducible components
of $\E_{g,\phi}$ were determined for $\phi \leq 4$ or $g \leq 20$ and described in terms of decompositions of the line bundles they parametrize into effective, primitive isotropic decompositions (that is, into effective classes of square zero that are indivisible in $\Num S$),
cf. \S  \ref{sec:iso} below.
As a sample, which was classically known, $\E_{5,2}$ has three irreducible components, denoted by $\E_{5,2}^{(I)}$, $\E_{5,2}^{(II)^+}$ and $\E_{5,2}^{(II)^-}$ in \cite{cdgk2}, corresponding to the following decompositions of $L$ into effective, primitive  isotropic classes:
\begin{eqnarray*}
\E_{5,2}^{(I)}  \hspace{0.3cm}    & \hspace{-0.5cm}L \sim 2E_1 +E_{1,2},    &  \hspace{0.2cm}E_1 \cdot E_{1,2}=1; \\
\E_{5,2}^{(II)^+}   & \hspace{-0.5cm}L \sim 2E_1 +2E_2,      &  \hspace{0.2cm}E_1 \cdot E_2=1; \\
\E_{5,2}^{(II)^-}   & \hspace{0.5cm} L \sim 2E_1 +2E_2+K_S,   & \hspace{0.2cm} E_1 \cdot E_2=1
\end{eqnarray*}
(where '$\sim$' denotes linear equivalence). 
The components can also be distinguished by studying the projective models of its general members, which is classical, cf. \cite[Prop. 4.1.2, Prop. 4.5.1, Thm. 4.6.3, Prop. 4.7.1, Thm. 4.7.1]{cd}. 
These cases also furnish a nice sample of Theorem \ref{mainthm1}: under $\rho_5: \E_5 \to \widehat{\E}_5$, the two components $\E_{5,2}^{(II)^+}$ and $\E_{5,2}^{(II)^-}$ are identified, whereas $\E_{5,2}^{(I)}$ is mapped two-to-one onto one  irreducible component of $\widehat{\E}_5$.

To explain our results and our answer to question (2),
let $L$ be an effective line bundle on an Enriques surface satisfying $L^2>0$.
Set
\begin{equation} \label{eq:defeps}
\varepsilon_L= \begin{cases} 0, & \mbox{if $L+K_S$ is not $2$-divisible in $\Pic S$,} \\
1, & \mbox{if $L+K_S$ is $2$-divisible in $\Pic S$.} 
\end{cases}
\end{equation}
We will prove (cf. Theorem \ref{thm:uniquefund}) that there exist {\it unique} nonnegative integers $a_i$, depending on $L$, satisfying
\[ a_1\geq \cdots \geq a_7 \; \; \mbox{and} \; \;
a_9+a_{10} \geq a_0 \geq a_9 \geq a_{10} \]
such that $L$ can be written as\footnote{The reason for choosing to write \eqref{eq:scrivoL0} without the term ``$a_8E_8$'' is because then one automatically has $E_1 \cdot L \leq \cdots \leq E_{10} \cdot L$.}
\begin{equation} \label{eq:scrivoL0}
     L \sim a_1E_1+\cdots +a_7E_7+a_9E_9+a_{10}E_{10}+a_0E_{9,10}+\varepsilon_L K_S,
   \end{equation}
   for an isotropic $10$-sequence $\{E_1,\ldots,E_{10}\}$ of effective divisors (cf. Definition \ref{def:rseq}) and an effective isotropic divisor $E_{9,10} \sim \frac{1}{3}\left(E_1+\cdots+E_{10}\right)-E_9-E_{10}$
(cf. Lemma \ref{lemma:ceraprima}).   
 We call \eqref{eq:scrivoL0} a {\it fundamental presentation of $L$} and the coefficients $a_i=a_i(L)$ and $\varepsilon_L$ the {\it fundamental coefficients of $L$} (cf. Definitions \ref{def:fund} and \ref{def:fund2}). We will prove that {\it the irreducible components of $\E_{g}$ are precisely the loci parametrizing pairs of genus $g$ with the same fundamental coefficients} (cf. 
   Theorem \ref{mainthm2'}).

   As an alternative  description of the irreducible components of $\E_g$ we will introduce a new function on the Enriques lattice $\mathfrak{N}$ that generalizes the $\phi$-function defined in \eqref{eq:defphi}.
On the set of ordered $10$-tuples of integers one has an order relation  by setting $(a_1,\ldots,a_{10}) <(b_1,\ldots,b_{10})$ if either
$\sum_{i=1}^{10} a_i< \sum_{i=1}^{10} b_i$ or
$\sum_{i=1}^{10} a_i =\sum_{i=1}^{10} b_i$ and there is an $n \in \{1,\ldots,9\}$ such that $a_i = b_i$ for all $i \in \{1,\ldots,n-1\}$ and $a_n< b_n$.

\begin{definition} \label{def:phivector}
  Let $L$ be an effective line bundle on an Enriques surface $S$ such that $L^2 >0$. The {\em $\phi$-vector associated to $L$}, denoted by $\underline{\phi}(L)=(\phi_1(L),\ldots,\phi_{10}(L)) \in \ZZ_+^{10}$, is the minimal value of all $(E_1 \cdot L,\ldots,E_{10} \cdot L)$ under the above mentioned order relation, where $(E_1,\ldots,E_{10})$ runs over all isotropic $10$-sequences satisfying $0< E_1 \cdot L \leq \cdots \leq E_{10} \cdot L$. 

  We say that an isotropic $10$-sequence $\{E_1,\ldots,E_{10}\}$ {\em computes}
  $\underline{\phi}(L)$ if $E_i \cdot L=\phi_i(L)$ for all $i \in \{1,\ldots,10\}$. 
  \end{definition}

Thus, the $\phi$-vector function measures the ``lowest'' intersection numbers of line bundles with respect to entire isotropic $10$-sequences, generalizing Cossec's $\phi$-function, since, as proved in the next result, $\phi_1(L)=\phi(L)$. We will prove the following properties:

\begin{thm} \label{thm:phivector1}
Let $\underline{\phi}(L)=(\phi_1,\ldots,\phi_{10})$ be the
$\phi$-vector associated to an effective line bundle $L$ with $L^2>0$ on an Enriques surface $S$. Then
\begin{itemize}
\item[(a)] $0< \phi_1 \leq \cdots \leq \phi_{10}$;
\item[(b)] $\sum_{i=1}^{10}\phi_i$ is divisible by $3$;
\item[(c)] $\phi_1+\cdots+\phi_7 \geq  2\left(\phi_8+\phi_9+\phi_{10}\right)$;
\item[(d)] $L^2=\frac{1}{9}\left(\sum_{i=1}^{10}\phi_i\right)^2-\sum_{i=1}^{10}\phi_i^2$;
\item[(e)] $\phi_1,\ldots,\phi_8$ are the eight lowest intersection numbers with $L$ achieved by numerically distinct effective isotropic divisors on $S$; in particular $\phi_1=\phi(L)$;
 \item[(f)] $L$ is numerically $2$-divisible if and only if $\phi_i$ is even for all $i \in \{1,\ldots,10\}$;   
\item[(g)] the isotropic $10$-sequences computing $\underline{\phi}(L)$ are, up to numerical equivalence, precisely the ones appearing in fundamental presentations of $L$.
\end{itemize}

Conversely, for any Enriques surface $S$ and for any $10$-tuple of  integers $(\phi_1,\ldots, \phi_{10})$ satisfying (a)-(c), there is an $[L] \in \Num S$ such that $L^2 >0$ and $\underline{\phi}(L)=(\phi_1,\ldots,\phi_{10})$.
\end{thm}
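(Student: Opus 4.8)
The plan is to split the proof into two parts: the direct implications (a)--(g), and the converse existence statement. For the direct part, I would first establish that any effective $L$ with $L^2 > 0$ admits a fundamental presentation \eqref{eq:scrivoL0} (this is Theorem \ref{thm:uniquefund}), fixing an isotropic $10$-sequence $\{E_1,\dots,E_{10}\}$ and the divisor $E_{9,10}$. The crucial preliminary observation is that, for an isotropic $10$-sequence, the intersection matrix is completely known: $E_i^2 = 0$, $E_i \cdot E_j = 1$ for $i \neq j$, and $E_{9,10}$ has $E_{9,10}^2 = 0$, $E_{9,10} \cdot E_i = 1$ for $i \neq 9, 10$ and $E_{9,10} \cdot E_9 = E_{9,10} \cdot E_{10} = 0$. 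From these, intersecting \eqref{eq:scrivoL0} with each $E_i$ (and with $E_{9,10}$) yields explicit linear formulas for the numbers $E_i \cdot L$ in terms of the fundamental coefficients $a_j$; one checks these are nondecreasing in $i$ precisely because of the ordering constraints on the $a_j$, which gives one candidate for the $\phi$-vector. The content of part (g) — that these sequences are exactly the ones computing $\underline{\phi}(L)$ — is then the heart of the matter: one direction is that no isotropic $10$-sequence can do better in the chosen order relation, which I expect to follow from the minimality built into Theorem \ref{thm:uniquefund} together with a careful bookkeeping argument showing that any isotropic $10$-sequence with smaller $(E_1\cdot L,\dots,E_{10}\cdot L)$ would produce a smaller fundamental presentation, contradicting uniqueness. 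I anticipate this is the main obstacle, and it likely requires the lattice-theoretic machinery (reflections in $(-2)$-classes, the action of the Weyl group on isotropic $10$-sequences) developed earlier in the paper.

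Once (g) is in hand, parts (a)--(d) are essentially bookkeeping: (a) is the ordering $0 < E_1\cdot L \le \cdots \le E_{10}\cdot L$ by definition; (b) follows since $\sum \phi_i = \sum E_i \cdot L = \left(\frac{1}{3}(E_1+\cdots+E_{10})\right)\cdot L \cdot 3$, using that $\frac13\sum E_i$ is an integral class (Lemma \ref{lemma:ceraprima}); (c) is the linear inequality $a_1 \ge \cdots \ge a_7$, $a_9 + a_{10} \ge a_0 \ge a_9 \ge a_{10}$ transported through the intersection formulas; and (d) is a direct computation of $L^2$ from \eqref{eq:scrivoL0} using the known intersection matrix, which collapses to the stated symmetric-function identity. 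For (e), I would argue that the eight smallest values $E \cdot L$ over effective isotropic $E$ are realized within a single isotropic $10$-sequence: given eight numerically distinct effective isotropic classes with small intersection with $L$, one extends/modifies them to an isotropic $10$-sequence without increasing intersections, again invoking the results on isotropic sequences from \S\ref{sec:iso}; the subtlety is that a $10$-sequence has ten members but only the first eight are "rigid" in the sense relevant here (cf.\ the footnote and the asymmetry in \eqref{eq:scrivoL0}), which is why the statement stops at $\phi_8$. Part (f) follows from (g) combined with the definition of $\varepsilon_L$: $L$ is numerically $2$-divisible iff all $a_i$ are even (after possibly absorbing $\varepsilon_L$), and the intersection formulas show this is equivalent to all $\phi_i$ being even; one has to check the two directions of the divisibility criterion carefully against the formula $E_{9,10}\sim\frac13(E_1+\cdots+E_{10})-E_9-E_{10}$.

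For the converse, given integers $(\phi_1,\dots,\phi_{10})$ satisfying (a)--(c), I would first solve the linear system coming from the intersection formulas for the $a_j$: conditions (a)--(c) are designed exactly so that the resulting $a_j$ are nonnegative integers satisfying the constraints $a_1 \ge \cdots \ge a_7$ and $a_9 + a_{10} \ge a_0 \ge a_9 \ge a_{10}$ (this is a finite linear-algebra check, and (b) guarantees integrality via the divisibility by $3$). Then, on any Enriques surface $S$, pick any isotropic $10$-sequence $\{E_1,\dots,E_{10}\}$ — these exist by \cite[Lemma VIII.15.1]{BHPV} or the standard theory — define $[L] \in \Num S$ by \eqref{eq:scrivoL0} (ignoring $\varepsilon_L$, which is irrelevant numerically), and check $L^2 > 0$ via formula (d), noting that (a)--(c) force $\sum \phi_i^2 < \frac19(\sum\phi_i)^2$ except in degenerate cases which one rules out. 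Finally, one must verify $\underline{\phi}(L) = (\phi_1,\dots,\phi_{10})$, i.e.\ that this presentation is already the fundamental one; this follows because the constraints on the $a_j$ are precisely the defining inequalities of a fundamental presentation in Theorem \ref{thm:uniquefund}, so by uniqueness it computes the $\phi$-vector. The only genuine work here is confirming that $L^2>0$ and that $L$ (or a numerical representative) is effective — the latter presumably follows from Riemann--Roch on the Enriques surface together with $L^2 > 0$ and $L$ being a nonnegative combination of effective classes.
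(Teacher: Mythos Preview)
Your overall architecture matches the paper's, but there is a genuine gap at the point you flag as ``the heart of the matter,'' namely the forward direction of (g): that the isotropic $10$-sequence coming from a fundamental presentation actually computes $\underline{\phi}(L)$. You propose to derive this from ``the minimality built into Theorem \ref{thm:uniquefund},'' arguing that a better $10$-sequence would yield a smaller fundamental presentation, contradicting uniqueness. This is circular: in the paper, the uniqueness claim in Theorem \ref{thm:uniquefund} is \emph{deduced from} Proposition \ref{prop:phivector0}, which in turn rests on the very minimality you are trying to prove. There is no independent proof of uniqueness to appeal to. The actual work is done by two concrete intersection-theoretic lemmas (Lemmas \ref{lemma:intminime1} and \ref{lemma:intminime2}), which establish by direct case analysis that for any isotropic $10$-sequence $\{F_1,\dots,F_{10}\}$ one has $\sum F_i \cdot L \geq \sum E_i \cdot L$, and then handle the lexicographic tie-breaking. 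No Weyl-group reflections are used; the arguments are bare-hands bounds using \cite[Lemma 2.1]{klvan} and the relations from Lemma \ref{lemma:ceraprima}. Your plan does not supply this step.

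Two smaller points. First, your intersection numbers for $E_{9,10}$ are wrong: from $E_{9,10} \sim \frac{1}{3}(E_1+\cdots+E_{10})-E_9-E_{10}$ one gets $E_{9,10}\cdot E_9 = E_{9,10}\cdot E_{10} = 2$, not $0$; this would derail your explicit computations for (c), (d), (f) if not corrected. Second, your proposed argument for (e) --- extending eight low-intersection isotropic classes to a $10$-sequence --- is not the paper's route and is problematic as stated: eight arbitrary isotropic classes with small $E\cdot L$ need not satisfy $E_i\cdot E_j = 1$ pairwise, so they need not sit inside any $10$-sequence. The paper instead proves (e) as a byproduct of Lemma \ref{lemma:intminime1}, which bounds $F\cdot L$ from below for \emph{every} effective primitive isotropic $F$ not among $E_1,\dots,E_9,E_{9,10}$. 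Finally, you do not address the reverse direction of (g) (that any $10$-sequence computing $\underline{\phi}(L)$ yields a fundamental presentation); the paper handles this cleanly via the Hodge index theorem applied to $A:=L-\sum a_iE_i - a_0 E_{9,10}$.
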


In particular, we remark that the set of values $(g(L),\underline{\phi}(L))$ occurring as genus and $\phi$-vector of polarized Enriques surfaces $(S,L)$ are completely determined, and this a posteriori determines all possible values of pairs $(g(L),\phi(L))$ by property (e).

Our anwer to question (2) can now be summarized as:

\begin{thm} \label{mainthm3}
  The irreducible components of $\E_{g}$ are in one-to-one correspondence with the set of $11$-tuples of integers $(\phi_1,\ldots, \phi_{10},\varepsilon)$ satisfying
  \begin{itemize}
  \item[(i)] $0<\phi_1 \leq \cdots \leq \phi_{10}$,
  \item[(ii)] $\sum_{i=1}^{10}\phi_i$ is divisible by $3$,
  \item[(iii)]  $\phi_1+\cdots+\phi_7 \geq 2\left(\phi_8+\phi_9+\phi_{10}\right)$,
   \item[(iv)] $\varepsilon \in \{0,1\}$, with $\varepsilon=0$ occurring if at least one $\phi_i$ is odd,
  \item[(v)] $2g-2=\frac{1}{9}\left(\sum_{i=1}^{10}\phi_i\right)^2-\sum_{i=1}^{10}\phi_i^2$.
  \end{itemize}
Precisely, the irreducible component of $\E_{g}$ corresponding to a specific $(\phi_1,\ldots, \phi_{10},\varepsilon)$ parametrizes all pairs $(S,L)$ with $\underline{\phi}(L)= (\phi_1,\ldots, \phi_{10})$ and $\varepsilon_L=\varepsilon$, which are precisely the pairs with the following fundamental presentation, setting $s:=
\frac{1}{3}\sum_{j=1}^{10}\phi_j$:
\[ L \sim \sum_{i=1}^7(\phi_8-\phi_i)E_i+
(s-2\phi_8-\phi_9)E_9+(s-2\phi_8-\phi_{10})E_{10}
+(s-3\phi_8)E_{9,10}+\varepsilon K_S,  \]
for an isotropic $10$-sequence $\{E_1,\ldots,E_{10}\}$, with $E_{9,10} \sim \frac{1}{3}(E_1+\cdots + E_{10})-E_9-E_{10}$.
\end{thm}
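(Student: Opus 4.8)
The plan is to reduce Theorem \ref{mainthm3} to the already-announced Theorems \ref{mainthm2'}, \ref{thm:phivector1} and \ref{thm:uniquefund}, and then do the bookkeeping that translates fundamental coefficients into $\phi$-vectors. First I would recall that by Theorem \ref{mainthm2'} the irreducible components of $\E_g$ are exactly the loci of pairs $(S,L)$ of genus $g$ sharing the same fundamental coefficients $(a_1,\ldots,a_7,a_9,a_{10},a_0,\varepsilon_L)$, so the content of the present theorem is that these tuples of fundamental coefficients are in explicit bijection with the $11$-tuples $(\phi_1,\ldots,\phi_{10},\varepsilon)$ satisfying (i)--(v). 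By Theorem \ref{thm:phivector1}(g), an isotropic $10$-sequence computes $\underline{\phi}(L)$ if and only if it appears (up to numerical equivalence) in a fundamental presentation of $L$; combining this with the uniqueness statement of Theorem \ref{thm:uniquefund}, the fundamental coefficients determine $\underline{\phi}(L)$ and vice versa, so it suffices to exhibit the inversion formulas explicitly.

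Next I would carry out that inversion. Given a fundamental presentation \eqref{eq:scrivoL0} with coefficients $a_i$ and using that $\{E_1,\ldots,E_{10}\}$ computes $\underline{\phi}(L)$, I would compute $\phi_i = E_i \cdot L$ for each $i$ by intersecting \eqref{eq:scrivoL0} with $E_i$, using the defining intersection numbers of an isotropic $10$-sequence ($E_i^2=0$, $E_i\cdot E_j = 1$ for $i\neq j$) together with $E_{9,10} \sim \frac13(E_1+\cdots+E_{10})-E_9-E_{10}$, which gives $E_i \cdot E_{9,10}$. This yields linear expressions $\phi_i = \phi_i(a_\bullet)$; in particular one gets $\phi_8 = E_8 \cdot L$ (note $E_8$ is absent from \eqref{eq:scrivoL0}, which is exactly why, per the paper's footnote, $\phi_8$ comes out as the ``free'' middle value), and summing all ten equations gives $\sum_i \phi_i = 3s$ with $s = \sum_{j} a_j$-type combination, whence $\phi_8 - \phi_i = a_i$ for $i \le 7$ and $s - 2\phi_8 - \phi_9 = a_9$, $s - 2\phi_8 - \phi_{10} = a_{10}$, $s - 3\phi_8 = a_0$. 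Solving back confirms the displayed fundamental presentation in the theorem statement. One then checks that the inequalities on the $a_i$ from Theorem \ref{thm:uniquefund} ($a_1 \ge \cdots \ge a_7$, $a_9+a_{10}\ge a_0 \ge a_9 \ge a_{10} \ge 0$, all $a_i \ge 0$) translate precisely into (i) and (iii) (ordering and the ``$\ge 2(\phi_8+\phi_9+\phi_{10})$'' bound), while (ii) and (v) are Theorem \ref{thm:phivector1}(b),(d), and the constraint on $\varepsilon$ in (iv) is \eqref{eq:defeps} combined with Theorem \ref{thm:phivector1}(f).

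Finally, for surjectivity of the correspondence — that every $11$-tuple satisfying (i)--(v) actually arises — I would invoke the converse part of Theorem \ref{thm:phivector1}: for any $(\phi_1,\ldots,\phi_{10})$ satisfying (a)--(c) and any Enriques surface $S$, there is $[L] \in \Num S$ with $L^2 > 0$ and $\underline{\phi}(L) = (\phi_1,\ldots,\phi_{10})$; condition (v) ensures the genus is the prescribed $g$, and when some $\phi_i$ is odd, Theorem \ref{thm:phivector1}(f) forces $L$ to be numerically non-$2$-divisible so that $\varepsilon_L$ can (and, in the $\rho_g$-preimage, does) take the value $0$, matching (iv); when all $\phi_i$ are even both values $\varepsilon \in \{0,1\}$ occur, giving the two components interchanged by Theorem \ref{mainthm1}. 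Non-emptiness of the corresponding moduli locus follows because $(S,L)$ exists, and its image is a single irreducible component by Theorem \ref{mainthm2'}.

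The main obstacle I anticipate is not conceptual but organizational: verifying carefully that the order relation used in Definition \ref{def:phivector} to select the minimal $(E_1\cdot L,\ldots,E_{10}\cdot L)$ is consistent with the normalization $a_1 \ge \cdots \ge a_7$ and the asymmetric role of the indices $8,9,10$ in \eqref{eq:scrivoL0}, so that the ``computing'' $10$-sequences of Theorem \ref{thm:phivector1}(g) are exactly the ones entering the fundamental presentation with no sign or indexing discrepancy — in other words, pinning down that $\phi_8$ rather than some $\phi_i$ with $i \le 7$ or $i \ge 9$ is the one that decouples. Once that normalization is matched, the equivalences (i)$\Leftrightarrow$ordering, (iii)$\Leftrightarrow$($a_i \ge 0$ for $i\le 7$ and $a_0 \ge a_9 \ge a_{10} \ge 0$), and the formula for $L$ are a direct substitution.
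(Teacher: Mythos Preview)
Your proposal is correct and follows essentially the same route as the paper: reduce to Theorem \ref{mainthm2'}, convert between fundamental coefficients and the $\phi$-vector, and read off conditions (i)--(v) from Theorem \ref{thm:phivector1} together with the definition of $\varepsilon_L$. The only cosmetic difference is that the paper has already packaged your explicit inversion formulas $a_i=\phi_8-\phi_i$, $a_9=s-2\phi_8-\phi_9$, etc., into Proposition \ref{prop:phivector0}, so its proof simply cites that proposition (and Remark \ref{rem:eps} for (iv)) rather than recomputing the intersections; your ``main obstacle'' about index normalization is likewise handled there via Lemma \ref{lemma:intminime1}.
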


We propose the following notation for the irreducible components of $\E_g$:
\begin{itemize}
\item $\E_{g;\phi_1,\ldots,\phi_{10}}$   corresponds to $(\phi_1,\ldots,\phi_{10})$ and $\epsilon=0$, if at least one $\phi_i$ is odd.
\item $\E_{g;\phi_1,\ldots,\phi_{10}}^+$ corresponds to $(\phi_1,\ldots,\phi_{10})$ and $\epsilon=0$, if all $\phi_i$ are even.
 \item $\E_{g;\phi_1,\ldots,\phi_{10}}^-$ corresponds to $(\phi_1,\ldots,\phi_{10})$ and $\epsilon=1$, if all $\phi_i$ are even.
  \end{itemize}
  Thus, by property (f) in Theorem \ref{thm:phivector1}, the components $\E_{g;\phi_1,\ldots,\phi_{10}}$ parametrize pairs $(S,L)$ with $L$ not numerically $2$-divisible, $\E_{g;\phi_1,\ldots,\phi_{10}}^+$ parametrize pairs $(S,L)$ with $L$ $2$-divisible in $\Pic S$, and $\E_{g;\phi_1,\ldots,\phi_{10}}^-$ parametrize pairs $(S,L)$ with $L+K_S$ $2$-divisible in $\Pic S$. In particular, by Theorem \ref{mainthm1} we obtain that
  {\it the map $\rho_g:\E_g \to \widehat{\E}_g$ identifies $\E_{g;\phi_1,\ldots,\phi_{10}}^+$ with $\E_{g;\phi_1,\ldots,\phi_{10}}^-$ and is two-to-one on $\E_{g;\phi_1,\ldots,\phi_{10}}$.}  We propose to use the notation $\widehat{\E}_{g;\phi_1,\ldots,\phi_{10}}$ for the images by $\rho_g$ of $\E_{g;\phi_1,\ldots,\phi_{10}}^{\pm}$ and $\E_{g;\phi_1,\ldots,\phi_{10}}$. As an application of our results we obtain a specific component dominating all others:

\begin{proposition} \label{prop:dom}
   For any $g \geq 2$ and any irreducible component
$\widehat{\E}_{g;\phi_1,\ldots,\phi_{10}}$ of $\widehat{\E}_g$ there is a surjective morphism from the  irreducible component $\widehat{\E}_{621;30,31,32,33,34,35,36,37,38,39}$   to $\widehat{\E}_{g;\phi_1,\ldots,\phi_{10}}$.
  \end{proposition}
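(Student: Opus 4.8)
The plan is to prove Proposition~\ref{prop:dom} by exhibiting, for each target component $\widehat{\E}_{g;\phi_1,\ldots,\phi_{10}}$, a ``degeneration'' realizing its general member as a specialization of a general member of the big component $\widehat{\E}_{621;30,\ldots,39}$. First I would unwind what the big component is: by Theorem~\ref{mainthm3}, the $11$-tuple $(30,31,\ldots,39,\varepsilon)$ has $\sum \phi_i = 345$, which is divisible by $3$, so $s = 115$; one checks $30+31+\cdots+36 = 231 \geq 2(37+38+39) = 228$, so condition (iii) holds, and $\frac{1}{9}(345)^2 - (30^2+\cdots+39^2) = 13225 - 11985 = 1240 = 2\cdot 621 - 2$, giving $g = 621$; since the $\phi_i$ have both parities, $\varepsilon = 0$ is the only choice, so the component is $\widehat{\E}_{621;30,31,\ldots,39}$ with fundamental presentation $L_0 \sim \sum_{i=1}^7(37-\phi_i)E_i + (115-74-38)E_9 + (115-74-39)E_{10} + (115-111)E_{9,10} = 7E_1 + 6E_2 + \cdots + E_7 + 3E_9 + 2E_{10} + 4E_{9,10}$ for an isotropic $10$-sequence.

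Next I would set up the degeneration mechanism. The key geometric input is that on a general Enriques surface $S$ an isotropic $10$-sequence $\{E_1,\ldots,E_{10}\}$ can be specialized (in a flat family of Enriques surfaces, or equivalently by moving in the period domain) so that two of the classes become equal, or so that an additional $(-2)$-class appears making $E_i$ and $E_j$ numerically proportional; more precisely, one uses that the locus in $\widehat{\E}_g$ where $[L]$ is of a given $\phi$-vector type lies in the closure of loci where the $\phi$-vector is ``larger'' in the order relation, because one can always add effective isotropic contributions. I would argue that given any admissible target tuple $(\phi_1,\ldots,\phi_{10})$ with genus $g$, the corresponding line bundle $L$ with fundamental presentation $L \sim \sum_{i=1}^7(\phi_8-\phi_i)E_i + (s-2\phi_8-\phi_9)E_9 + (s-2\phi_8-\phi_{10})E_{10} + (s-3\phi_8)E_{9,10} + \varepsilon K_S$ arises on a surface $S$ that is a limit of surfaces carrying the big bundle $L_0$, via a map of the associated moduli/period spaces: one embeds the $10$-dimensional moduli of Enriques surfaces together with the datum of an isotropic $10$-sequence, observes that over the big component this datum varies in an irreducible family dominating all Enriques surfaces, and then the assignment $(S, \{E_i\}) \mapsto (S, L)$ for the prescribed combination of the $E_i$ defines a morphism whose image is exactly $\widehat{\E}_{g;\phi_1,\ldots,\phi_{10}}$. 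The surjectivity follows because every pair $(S,[L])$ in the target component has, by Theorem~\ref{thm:phivector1}(g), an isotropic $10$-sequence computing its $\phi$-vector, and that same sequence on that same $S$ also supports $L_0$ (the coefficients $7,6,\ldots,1,3,2,4$ are all positive, so $L_0$ is genuinely effective), so $(S,[L])$ is hit.

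Concretely, the morphism I would write down is: on the irreducible parameter space $\mathcal{T}$ of triples (Enriques surface $S$, ample polarization realizing the big component, isotropic $10$-sequence computing $\underline{\phi}(L_0)$) — which maps onto $\widehat{\E}_{621;30,\ldots,39}$ — compose with the modular map $\mathcal{T} \to \widehat{\E}_{g;\phi_1,\ldots,\phi_{10}}$ sending the triple to $(S,[L])$ with $L$ the prescribed integral linear combination of the $E_i$ and $E_{9,10}$. The point is that $\mathcal{T}$ surjects onto \emph{both} components, so one gets a correspondence; to upgrade the correspondence on the target side to an honest surjective morphism $\widehat{\E}_{621;30,\ldots,39} \to \widehat{\E}_{g;\phi_1,\ldots,\phi_{10}}$ one notes that the fibers of $\mathcal{T} \to \widehat{\E}_{621;\ldots}$ are connected (the isotropic $10$-sequence computing a given $\phi$-vector is unique up to the action of the automorphism group / Weyl group acting trivially on $\Num$, by Theorem~\ref{thm:phivector1}(g) which says such sequences are unique up to numerical equivalence), so the composite factors through $\widehat{\E}_{621;\ldots}$, and its image, being the image of the composite, is all of $\widehat{\E}_{g;\phi_1,\ldots,\phi_{10}}$.

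The main obstacle I expect is the last factorization step: showing that the ``big'' morphism $\mathcal{T} \to \widehat{\E}_{621;30,\ldots,39}$ has connected (or at least geometrically connected) fibers, so that the map to the target descends to the quotient rather than merely defining a correspondence. This is where Theorem~\ref{thm:phivector1}(g) is essential — it pins down the computing $10$-sequence up to numerical equivalence — but one still has to check that the finite ambiguity in lifting a numerical $10$-sequence to an actual one of effective divisors (a torsor under $K_S$, and a choice among finitely many $W$-translates) does not disconnect things, equivalently that moving $(S,[L_0])$ in its component one can transport any computing sequence to any other continuously. I would handle this exactly as the irreducibility of $\widehat{\E}_{621;\ldots}$ itself is presumably handled elsewhere in the paper (via the fact that the relevant locus in the period domain is a single arithmetic-quotient and the monodromy acts transitively on computing $10$-sequences), so it should reduce to results already in place; the genus-$621$ and the specific tuple $(30,\ldots,39)$ are chosen precisely so that $7E_1+6E_2+\cdots+E_7+3E_9+2E_{10}+4E_{9,10}$ dominates — i.e., all the needed coefficients in every target presentation are at most the corresponding ``room'' — and verifying that numerical domination (that for every admissible target tuple the displayed combination is ``reachable'') across all $g \geq 2$ is a finite-type but slightly delicate inequality bookkeeping that I would organize using condition (iii) and the monotonicity $\phi_1 \leq \cdots \leq \phi_{10}$.
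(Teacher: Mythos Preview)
Your overall architecture is right: define a map on the big component by sending $(S,[L_0])$ with fundamental presentation $7E_1+6E_2+\cdots+E_7+3E_9+2E_{10}+4E_{9,10}$ to $(S,[a_1E_1+\cdots+a_0E_{9,10}])$ for the target fundamental coefficients, and check surjectivity by starting from any $(S',[L'])$ in the target, taking a $10$-sequence from its fundamental presentation, and building $L_0$ with those same $E'_i$. The surjectivity half is essentially correct as you state it, and the ``domination bookkeeping'' you worry about at the end is a red herring: no inequality between the source and target coefficients is needed, because you are reusing the same $10$-sequence on the same surface.

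The genuine gap is in the well-definedness (your ``factorization'') step. You invoke Theorem~\ref{thm:phivector1}(g) as saying that the computing $10$-sequence is unique up to numerical equivalence, but that is not what it says: it identifies the set of computing sequences with the set of sequences arising in fundamental presentations, and the paper explicitly remarks that the latter is in general \emph{not} a singleton, even up to numerical equivalence or permutation. So for a general component the map $(S,[L_0]) \mapsto (S,[\sum a_iE_i])$ is not well defined, and your appeal to monodromy/connected fibers does not repair this: two numerically distinct computing $10$-sequences on the same $(S,[L_0])$ can produce two genuinely different numerical classes $[\sum a_iE_i]$. What has to be proved is the special fact that for the particular tuple $(30,31,\ldots,39)$ the computing $10$-sequence \emph{is} unique up to numerical equivalence. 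This is done by a direct calculation: one shows that any effective primitive isotropic $E$ with $E\not\equiv E_i$ for all $i$ satisfies $E\cdot L_0 \geq 40 > 39 = E_{10}\cdot L_0$ unless $E\equiv E_{9,10}$ or $E_{8,10}$, and then rules these out as members of a computing $10$-sequence. Only after this uniqueness claim is established does your map descend to a morphism out of $\widehat{\E}_{621;30,\ldots,39}$. This is precisely why the specific tuple $(30,\ldots,39)$ is chosen; it is in fact the lowest-genus component with this uniqueness property.
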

 
It would be interesting to know whether this component is the same as the dominating component found by Gritsenko and Hulek \cite{GrHu}, cf. Question \ref{Q:same}.

The paper is organized as follows. In \S \ref{sec:iso} we recall and improve some results from \cite{cdgk} on effective decompositions of line  bundles on Enriques surfaces into isotropic divisors. In \S \ref{sec:degenr} we
study reducible surfaces that are a transversal union of a rational surface and
a surface birational to the second symmetric product of an elliptic curve, considered first in \cite{cdgk-deg}. Our main result is Theorem \ref{thm:hilbert}, which says that projective models in $\PP^{g-1}$ of those reducible surfaces by  line bundles of degree $2g-2$ (as described in Proposition \ref{prop:immersione}) are smoothable to Enriques surfaces of degree $2g-2$; more precisely, they represent smooth points in the Hilbert scheme of such surfaces. This result is the Enriques version of \cite[Thm. 1]{clm} for $K3$ surfaces  and we believe that it is of independent interest and that it will have further applications; indeed, although degenerations of Enriques surfaces have been widely studied (cf., e.g., \cite{K,PP,M,Shah}), a concrete result such as Theorem \ref{thm:hilbert} has not been available yet, cf. Remark \ref{rem:border}. A second key result is Proposition \ref{prop:main} stating that under certain conditions (which will turn out to be equivalent to numerical non-$2$-divisibility) the projective models by both a line bundle and its adjoint lie in the same irreducible component of the Hilbert scheme. 

In \S \ref{sec:proof} we prove  Theorem \ref{mainthm1} by degeneration, using the results from \S \ref{sec:degenr}. Finally, in \S \ref{sec:phi} we introduce the notions of fundamental presentation and $\phi$-vector mentioned above and prove Theorems \ref{thm:phivector1} and \ref{mainthm3}, as well as Proposition \ref{prop:dom}.

\vspace{0.3cm} 
\noindent
{\it Acknowledgements.} I thank Klaus Hulek for interesting correspondence about \cite{GrHu} and useful comments on a preliminary draft of this paper, Christian Liedtke, Frank Gounelas and Marian Aprodu for asking inspiring questions during a talk on mine on \cite{cdgk} at TU M{\"u}nchen, and Ciro Ciliberto, Thomas Dedieu, Concettina Galati, Michael Hoff, Yeongrak Kim, Frank-Olaf Schreyer and Alessandro Verra 
 for useful conversations on the topic.  I also thank a referee for a very careful reading and several useful comments and corrections, in particular for discovering a blunder in the proof of an incorrect first version of Proposition \ref{prop:dom}.  I acknowledge support from the Trond Mohn Foundation (project ``Pure Mathematics in Norway'') and grant 261756 of the Research Council of Norway.

\section{Isotropic $10$-sequences and simple isotropic decompositions} \label{sec:iso}

Let us explain some notions from \cite{cdgk}.  Any effective line bundle $L$
with $L^ 2\geqslant 0$ on an Enriques surface
may be written as  (cf. \cite[Cor. 4.6]{cdgk}) 
\begin {equation}\label{eq:ssid}  L \sim a_1E_1+\cdots+a_nE_{n}+\varepsilon K_S,
\end{equation}
such that  all $E_i$ are effective, non--zero, \emph{isotropic} (i.e., $E_i^2=0$) and \emph{primitive} (i.e., indivisible in $\Num S$), all $a_i$ are positive integers, $\varepsilon \in \{0,1\}$, $ n \leqslant 10$ and  
\begin{equation}
\begin{cases}
 \mbox{either  $n \neq 9$,  $E_i \cdot E_j=1$ for all $i \neq j$,} \\
\label{eq:int2-int} 
\mbox{or  $n \neq 10$,  $E_1 \cdot E_2=2$ and $E_i \cdot E_j=1$ for all other indices
  $i \neq j$,} \\
 \mbox{or  $E_1 \cdot E_2=E_1 \cdot E_3=2$ and $E_i \cdot E_j=1$ for all
  other indices $i \neq j$,} 
\end{cases}
\end{equation}
up to reordering indices.
We call this a {\it simple isotropic decomposition},  cf. \cite[Def. 4.1]{cdgk}.

We say that two polarized  Enriques surfaces $(S,L)$ and $(S',L')$ in $\E_g$  {\it admit the same  simple decomposition type} (cf. \cite[Def. 4.13]{cdgk}) if one 
 has simple isotropic decompositions 
\begin{equation}\label{eq:sdt}  L \sim a_1 E_1+\cdots +a_nE_n+\varepsilon K_S
\; \; \mbox{and} \; \; L' \sim a_1 E'_1+\cdots +a_nE'_n+\varepsilon K_{S'}, \; \;  \mbox{with} \; \; \varepsilon   \in \{0,1\} \end{equation}
such that $E_i \cdot E_j=E'_i \cdot E'_j$ for all $i \neq j$.
Similarly, we say that two numerically polarized Enriques surfaces $(S,[L])$ and $(S, [L'])$ in $\widehat{\E}_g$   admit the same  simple decomposition type if \eqref{eq:sdt} holds modulo $K_S$ and $K_{S'}$.

We note that $\varepsilon=1$ is only needed in \eqref {eq:sdt} when
all $a_i$s are even, otherwise one may substitute any $E_i$ having odd
coefficient with $E_i+K_S$.  Also note that a given line
bundle may admit decompositions  of different types, cf. \cite[Rmk. 4.14]{cdgk},  but nevertheless the property of admitting the same decomposition type is an equivalence relation on $\E_g$ and $\widehat{\E}_g$, cf. \cite[Prop. 4.15]{cdgk}.

 We recall the following from \cite[p.~122]{cd}:

\begin{definition} \label{def:rseq}
  An {\em isotropic $10$-sequence} on an Enriques surface $S$  is a sequence of isotropic effective divisors $\{E_1, \ldots, E_{10}\}$  such that $E_i \cdot E_j=1$ for $i \neq j$.
\end{definition}

It is  well-known that any Enriques surface contains such sequences. Note that we, contrary to \cite{cd}, require the divisors to be {\it effective}, which can always be arranged by changing signs. 
 We will  also   make use of the following result, cf. \cite[Lemma 3.4(a)]{cdgk}, \cite[Lemma 1.6.2(i)]{cos2} or \cite[Cor. 2.5.5]{cd}:

\begin{lemma} \label{lemma:ceraprima}
   Let $\{E_1,\ldots,E_{10}\}$ be an isotropic $10$-sequence. Then there exists a divisor $D$ on $S$ such that $D^2=10$, $\phi(D)=3$ and
$3D \sim E_1+\cdots+E_{10}$. Furthermore, for any $i \neq j$, we have
\begin{equation} \label{eq:10-3}
 D \sim E_i+E_j+E_{i,j}, \; \; \mbox{with $E_{i,j}$ effective isotropic,} \; \; E_i \cdot E_{i,j}=E_j \cdot E_{i,j}=2,
\end{equation} 
and 
$E_k \cdot E_{i,j}=1 \; \; \mbox{for} \; \; k \neq i,j$. 
 Moreover,
$E_{i,j} \cdot E_{k,l}= \begin{cases} 1, \; \mbox{if} \; \{i,j\} \cap \{k,l\} \neq \emptyset, \\
2, \; \mbox{if} \; \{i,j\} \cap \{k,l\} = \emptyset. \end{cases}$
\end{lemma}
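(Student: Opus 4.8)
\textbf{Proof strategy for Lemma \ref{lemma:ceraprima}.}
The plan is to reduce everything to the existence of the divisor $D$ with $D^2=10$, $\phi(D)=3$ and $3D \sim E_1+\cdots+E_{10}$, and then to extract the remaining intersection numbers by straightforward linear algebra in $\Num S$, using the defining relations $E_i^2=0$ and $E_i \cdot E_j=1$ for $i \neq j$. First I would recall (citing \cite[Lemma 1.6.2(i)]{cos2} or \cite[Cor. 2.5.5]{cd}) that for an isotropic $10$-sequence the class $\frac{1}{3}(E_1+\cdots+E_{10})$ is integral in $\Num S$: indeed $E_1+\cdots+E_{10}$ has self-intersection $\sum_{i \neq j} E_i \cdot E_j = 90$, and one checks using the structure of the Enriques lattice $\mathfrak{N} = U \+ E_8(-1)$ that this class is $3$-divisible, giving a class $D$ (well-defined in $\Num S$, hence up to $K_S$ in $\Pic S$, and one picks the effective representative) with $3D \sim E_1+\cdots+E_{10}$ and $D^2 = \frac{1}{9}\cdot 90 = 10$. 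That $\phi(D)=3$ follows since $D \cdot E_i = \frac{1}{3}\sum_j E_i \cdot E_j = \frac{1}{3}\cdot 9 = 3$ for every $i$, so $\phi(D) \leq 3$, while $\phi(D)^2 \leq D^2 = 10$ forces $\phi(D) \leq 3$, and $\phi(D) \geq 3$ because $D^2 = 10 > \phi^2 + \phi - 2$ would be violated for $\phi \leq 2$ (this is the numerical classification point, alternatively quote \cite{cos2} directly).

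Next, for fixed $i \neq j$, I would set $E_{i,j} := D - E_i - E_j$ as a class in $\Num S$ and verify it is isotropic and effective. Isotropy is a direct computation: $E_{i,j}^2 = D^2 - 2D\cdot E_i - 2D \cdot E_j + 2 E_i \cdot E_j + E_i^2 + E_j^2 = 10 - 6 - 6 + 2 = 0$. For the intersection numbers, $E_i \cdot E_{i,j} = E_i \cdot D - E_i^2 - E_i \cdot E_j = 3 - 0 - 1 = 2$, and symmetrically $E_j \cdot E_{i,j} = 2$; for $k \neq i,j$ one gets $E_k \cdot E_{i,j} = E_k \cdot D - E_k \cdot E_i - E_k \cdot E_j = 3 - 1 - 1 = 1$. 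Effectivity of $E_{i,j}$ then follows from Riemann--Roch together with the intersection data: $E_{i,j}$ or $-E_{i,j}$ is effective since $E_{i,j}^2 = 0$, and $E_{i,j} \cdot E_k = 1 > 0$ for $k \neq i,j$ rules out $-E_{i,j}$ being effective (an effective isotropic class has nonnegative intersection with every effective divisor), so after possibly adjusting by $K_S$ we get an effective isotropic representative with $D \sim E_i + E_j + E_{i,j}$ in $\Pic S$.

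Finally, for the mutual intersections $E_{i,j} \cdot E_{k,l}$, I would just expand $E_{i,j} \cdot E_{k,l} = (D - E_i - E_j)\cdot(D - E_k - E_l)$ using $D^2 = 10$, $D \cdot E_m = 3$, $E_m^2 = 0$, and $E_m \cdot E_n = 1$ for $m \neq n$, and count how many of the four pairwise products $E_i\cdot E_k, E_i \cdot E_l, E_j \cdot E_k, E_j \cdot E_l$ equal $0$ (when an index coincides) versus $1$: if $\{i,j\} \cap \{k,l\} = \emptyset$ all four products are $1$, giving $10 - 3 - 3 - 3 - 3 + 1 + 1 + 1 + 1 = 2$; if the sets share exactly one index, one product is $0$ and the total is $1$ (one also checks the case $\{i,j\} = \{k,l\}$ trivially gives $0 = E_{i,j}^2$, consistent with the ``intersection $\neq \emptyset$'' clause when interpreted as distinct pairs). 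I do not expect a genuine obstacle here: the only non-formal input is the $3$-divisibility of $E_1+\cdots+E_{10}$ and the value $\phi(D)=3$, both of which are classical and cited; everything else is bookkeeping in the intersection form. The mild subtlety worth a sentence of care is keeping track of $K_S$ when passing between $\Num S$ and $\Pic S$ and fixing effective representatives, which is handled as above by sign/adjunction considerations.
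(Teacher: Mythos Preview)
The paper does not actually prove this lemma; it is stated with references to \cite[Lemma 3.4(a)]{cdgk}, \cite[Lemma 1.6.2(i)]{cos2} and \cite[Cor. 2.5.5]{cd}, and your proposal is essentially a correct reconstruction of the standard argument behind those citations: cite the $3$-divisibility of $E_1+\cdots+E_{10}$, then compute everything else by linear algebra in the intersection form.

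There is one genuine slip. Your argument that $\phi(D)\geq 3$ via the gap ``$\phi^2 < L^2 < \phi^2+\phi-2$'' from \cite{KL1} does not work: for $\phi=2$ that interval is $4<L^2<4$, which is empty and excludes nothing, so $L^2=10$ with $\phi=2$ is not ruled out this way. The clean argument is the one you implicitly have available: if $F$ is effective isotropic with $F\cdot D\leq 2$, then $F\cdot(E_1+\cdots+E_{10})=3F\cdot D\leq 6<10$, so $F\cdot E_i=0$ for some $i$; by \cite[Lemma 2.1]{klvan} (or the Hodge index theorem for two primitive isotropic classes) this forces $F\equiv E_i$, hence $F\cdot D=3$, a contradiction. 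You already hedge with ``alternatively quote \cite{cos2} directly'', so this is easy to patch.

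A second point worth tightening is effectivity of $E_{i,j}$. The sentence ``an effective isotropic class has nonnegative intersection with every effective divisor'' is false in general (an isotropic class need not be nef if it has $(-2)$-components). The correct version of your argument is: Riemann--Roch gives $h^0(E_{i,j})+h^0(K_S-E_{i,j})\geq 1$; if $K_S-E_{i,j}$ were effective, intersecting with an ample class $H$ would give $-E_{i,j}\cdot H\geq 0$, but $E_{i,j}\cdot H>0$ since $E_{i,j}\cdot D=4>0$ and $D$ is big (or simply since all $E_k$ are effective and $E_{i,j}\cdot(E_1+\cdots+E_{10})=12>0$). Everything else in your proposal is straightforward and correct.
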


In particular, for $i,j,k$ distinct, we have
$E_i+E_j+E_{i,j} \sim E_i+E_k+E_{i,k}$, so that
\begin{equation}
  \label{eq:ijk}
  E_j+E_{i,j} \sim E_k+E_{i,k}.
\end{equation}

The next result yields a ``canonical'' way of writing simple isotropic decompositions:

\begin{proposition} \label{prop:sid}
   Let $L$ be any effective line bundle on an Enriques surface $S$ such that $L^2>0$. Then there is an isotropic $10$-sequence $\{E_1,\ldots,E_{10}\}$ (depending on $L$) such that there is a simple isotropic decomposition
   \begin{equation} \label{eq:scrivoL}
     L \sim a_1E_1+\cdots +a_7E_7+a_9E_9+a_{10}E_{10}+a_0E_{9,10}+\varepsilon_L K_S,
   \end{equation}
   where $E_{9,10} \sim \frac{1}{3}\left(E_1+\cdots+E_{10}\right)-E_9-E_{10}$ and
$a_0,a_1,\ldots,a_{10}$ are nonnegative integers satisfying
\begin{eqnarray}
  \label{eq:condcoff}
  & a_1\geq \cdots \geq a_7, & \; \; \mbox{and} \\
  \label{eq:condcoff'}
  & a_9+a_{10} \geq a_0 \geq a_9 \geq a_{10}. &
\end{eqnarray}
\end{proposition}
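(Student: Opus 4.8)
The plan is to start from an arbitrary simple isotropic decomposition of $L$ and massage it into the canonical form \eqref{eq:scrivoL} by a sequence of elementary moves inside the set of isotropic $10$-sequences. The first step is to invoke \cite[Cor. 4.6]{cdgk} to obtain \emph{some} simple isotropic decomposition $L \sim a_1E_1+\cdots+a_nE_n+\varepsilon K_S$ with $n \leq 10$. Since $L^2>0$, the case $n=1$ is impossible, and one knows (from the structure theory recalled in \S\ref{sec:iso}, or directly from \cite{cdgk}) that after possibly adding more isotropic summands with zero coefficients and reordering, one may assume the intersection pattern is the ``generic'' one, $E_i\cdot E_j=1$ for all $i\neq j$; the two exceptional patterns in \eqref{eq:int2-int} can be converted to this one by replacing an offending $E_1$ with $E_{1,2}$ (using \eqref{eq:10-3}, which turns an intersection $2$ into intersections $1$ after a suitable re-indexing). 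So I may assume from the outset that $\{E_1,\ldots,E_{10}\}$ is a genuine isotropic $10$-sequence and that $L \sim \sum_{i=1}^{10} a_iE_i + \varepsilon K_S$ with all $a_i\geq 0$.

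The heart of the argument is a normalization using the substitution \eqref{eq:ijk}: for distinct $i,j,k$ one has $E_j+E_{i,j}\sim E_k+E_{i,k}$, and more relevantly $E_i+E_j+E_{i,j}\sim D$ is symmetric, so one can trade a pair $E_9+E_{10}$ for $D-E_{9,10}$, i.e. rewrite $a_9E_9+a_{10}E_{10}$ in terms of $E_9,E_{10},E_{9,10}$. Concretely, I would argue as follows. Among all isotropic $10$-sequences $\{E_1,\ldots,E_{10}\}$ for which $L$ has a simple isotropic decomposition with \emph{only} the classes $E_1,\ldots,E_7,E_9,E_{10},E_{9,10}$ appearing with nonnegative coefficients (call such a decomposition ``admissible''; one must first check at least one exists — this is where one uses that $E_1+\cdots+E_{10}\sim 3D$ lets one substitute $E_8 \sim 3D - E_1-\cdots-E_7-E_9-E_{10}$, hence $E_8 \sim E_9+E_{10}+E_{9,10}-(\text{stuff})$... more carefully, $D\sim E_8+E_9+E_{8,9}$ and $D \sim E_9+E_{10}+E_{9,10}$, so $E_8+E_{8,9}\sim E_{10}+E_{9,10}$, giving a way to eliminate $E_8$), choose one that is ``extremal'' for a suitable discrete quantity, e.g. minimizing $a_0$. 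Then I claim the inequalities \eqref{eq:condcoff}–\eqref{eq:condcoff'} hold at the minimum: if $a_0 > a_9+a_{10}$ one replaces $a_0E_{9,10}$ partially via $E_{9,10}\sim D - E_9 - E_{10}$ and $3D\sim\sum E_i$ to lower $a_0$ (contradiction), while $a_0\geq a_9\geq a_{10}$ and $a_1\geq\cdots\geq a_7$ are arranged by reindexing $E_1,\ldots,E_7$ among themselves and swapping the roles of $E_9,E_{10}$, noting that the relation $E_{9,10}\sim\frac13(E_1+\cdots+E_{10})-E_9-E_{10}$ is symmetric in $E_9\leftrightarrow E_{10}$.

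I expect the main obstacle to be the bookkeeping needed to show that an admissible decomposition exists at all and that the extremal one actually satisfies the full chain $a_9+a_{10}\geq a_0\geq a_9\geq a_{10}$ simultaneously with $a_1\geq\cdots\geq a_7$ — one has to check that the move lowering $a_0$ does not destroy the ``only $E_1,\ldots,E_7,E_9,E_{10},E_{9,10}$ appear'' property, i.e. that after substitution the coefficient of $E_8$ stays zero and the other coefficients stay nonnegative. This is a finite but delicate linear-algebra verification using the intersection table in Lemma \ref{lemma:ceraprima}; the key computational inputs are $3D\sim E_1+\cdots+E_{10}$, $D\sim E_i+E_j+E_{i,j}$, and $E_{i,j}\cdot E_{k,l}\in\{1,2\}$, which let one re-express any of the ``extra'' classes $E_8, E_{i,j}$ (for $\{i,j\}\neq\{9,10\}$) in terms of the nine allowed ones at the cost of controlled sign changes in the coefficients. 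Finally, the value $\varepsilon_L\in\{0,1\}$ is forced: as noted after \eqref{eq:sdt}, $\varepsilon=1$ is needed precisely when all the coefficients $a_0,a_1,\ldots,a_{10}$ are even, and in that situation $L$ is numerically $2$-divisible, so $\varepsilon_L$ as defined in \eqref{eq:defeps} is exactly the value that must be used; if some coefficient is odd one absorbs $K_S$ into the corresponding $E_i$ and takes $\varepsilon_L=0$, consistent with \eqref{eq:defeps}. This matches the claim that $\varepsilon_L$ in \eqref{eq:scrivoL} is the invariant defined in \eqref{eq:defeps}, and completes the proof.
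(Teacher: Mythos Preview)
Your outline follows the same broad strategy as the paper---start from an existing simple isotropic decomposition and rewrite using the relations in Lemma~\ref{lemma:ceraprima}---but the extremal argument you propose has a concrete gap. Minimizing $a_0$ among ``admissible'' decompositions, together with the move you describe (rewriting $E_{9,10}$ via $3D\sim\sum E_i$ to lower $a_0$ when $a_0>a_9+a_{10}$), can enforce the upper bound $a_9+a_{10}\geq a_0$; and reindexing $E_1,\ldots,E_7$ and swapping $E_9\leftrightarrow E_{10}$ gives $a_1\geq\cdots\geq a_7$ and $a_9\geq a_{10}$. But nothing in your argument forces $a_0\geq a_9$. This inequality cannot be obtained by reindexing (the coefficient $a_0$ sits in front of $E_{9,10}$, which is not interchangeable with any $E_i$), and minimizing $a_0$ pushes in the \emph{wrong} direction---it could well produce $a_0=0<a_9$, in which case \eqref{eq:condcoff'} would force $a_9=a_{10}=0$, which there is no reason to expect. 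The paper handles this case not by an extremal principle but by an explicit transformation: when $a_0<a_9$, one adds $b=\min\{a_9-a_0,a_7\}$ copies of $\sum E_i\sim 3(E_8+E_{10}+E_{8,10})$ and uses $E_9+E_{9,10}\sim E_8+E_{8,10}$ to pass to a new isotropic $10$-sequence with $a_0'=a_0+3b$; after at most one further iteration the full chain \eqref{eq:condcoff'} holds. A symmetric explicit rewrite treats the case $a_9+a_{10}<a_0$.

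Two smaller points. First, the paper bypasses your opening step (converting the exceptional intersection patterns of \eqref{eq:int2-int} to the all-ones pattern) by quoting \cite[Cor.~4.7 and Rem.~4.11]{cdgk}, which already yields a decomposition $\sum_{i=1}^{10}a_iE_i+a_0E_{9,10}+\varepsilon_LK_S$ for an isotropic $10$-sequence with $a_0=0$ or $a_8=0$; your sketch ``replace $E_1$ by $E_{1,2}$'' is not obviously sufficient, since $E_{1,2}\cdot E_1=E_{1,2}\cdot E_2=2$, so the substitution introduces new intersection-$2$ pairs rather than removing them. Second, your acknowledgement that ``the main obstacle is the bookkeeping'' is accurate, but the bookkeeping is the entire content of the proof: the paper carries it out in full, and without it one does not know that the coefficients stay nonnegative or that the procedure terminates.
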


\begin{proof}
  By \cite[Cor. 4.7]{cdgk} combined with  \cite[Rem. 4.11]{cdgk}, after renaming indices, there is an isotropic $10$-sequence $\{E_1,\ldots,E_{10}\}$ and nonnegative integers $a_0,a_1,\ldots,a_{10}$ such that
  \[
     L \sim a_1E_1+\cdots +a_{10}E_{10}+a_0E_{9,10}+\varepsilon_L K_S
   \]
   with $a_0=0$ or $a_8=0$. We have left to prove that we can make sure the coefficients satisfy  \eqref{eq:condcoff} and \eqref{eq:condcoff'}.

  Assume  $a_0=0$. By renaming indices we may assume $a_1\geq \cdots \geq a_{10}$, so that \eqref{eq:condcoff} is satisfied. If $a_8=0$, then $a_9=a_{10}=0$ and \eqref{eq:condcoff'} is satisfied. If $a_8>0$, then, using \eqref{eq:10-3}:
  \begin{eqnarray*}
    L & \sim & a_1E_1+\cdots +a_{10}E_{10}+\varepsilon_L K_S \\
      & \sim & a_8(E_1+\cdots+E_{10})+(a_1-a_8)E_1+\cdots+(a_{10}-a_8)E_{10}+\varepsilon_L K_S \\
      & \sim &  3a_8(E_9+E_{10}+E_{9,10})+(a_1-a_8)E_1+\cdots+(a_{10}-a_8)E_{10}+\varepsilon_L K_S \\
      & \sim & \sum_{i=1}^7(a_i-a_8)E_i+(2a_8+a_9)E_9+(2a_8+a_{10})E_{10}+3a_8E_{9,10}+\varepsilon_L K_S.
  \end{eqnarray*}
  Setting $a_i':=a_i-a_8$ for $i \in \{1,\ldots,7\}$, $a_i':=2a_8+a_i$ for $i \in \{9,10\}$ and $a_0':=3a_8$, we see that $a'_1\geq \cdots \geq a'_7$ and 
  \[ a'_9+a'_{10}=4a_8+a_9+a_{10} \geq 3a_8=a'_0 \geq 2a_8+a_9=a'_9 \geq 2a_8+a_{10}=a'_{10},\]
  so the coefficients $a'_i$ satisfy \eqref{eq:condcoff} and \eqref{eq:condcoff'}.

  Assume henceforth that $a_0>0$, so that $a_8=0$. By renaming indices we may assume $a_1\geq \cdots \geq a_{7}$, so that \eqref{eq:condcoff} is satisfied, and $a_9 \geq a_{10}$. We see that \eqref{eq:condcoff'} is satisfied unless $a_0 < a_9$ or $a_9+a_{10} <a_0$. We treat these two cases separately.

  {\bf Case $a_0 < a_9$.} We set $b:=\min\{a_9-a_0,a_7\}$.
Recalling \eqref{eq:10-3} and \eqref{eq:ijk}, we have
\begin{eqnarray*}
   L & \sim & a_1E_1+\cdots + a_7E_7+a_9E_9+a_{10}E_{10}+a_0E_{9,10}+\varepsilon_L K_S \\
         & \sim & b(E_1+\cdots+E_{10})+\sum_{i=1}^7(a_i-b)E_i-bE_8+a_0(E_9+E_{9,10}) \\
  & & \hspace{2cm} +(a_9-a_0-b)E_9+(a_{10}-b)E_{10} +\varepsilon_L K_S \\
           & \sim & 3b(E_8+E_{10}+E_{8,10})+\sum_{i=1}^7(a_i-b)E_i-bE_8+ a_0(E_8+E_{8,10})\\
  & & \hspace{2cm} +(a_9-a_0-b)E_9+(a_{10}-b)E_{10} +\varepsilon_L K_S \\
  & \sim & \sum_{i=1}^7(a_i-b)E_i+(a_9-a_0-b)E_9\\
  & & \hspace{2cm} +(a_0+2b)E_8+(a_{10}+2b)E_{10}+(a_0+3b)E_{8,10}+\varepsilon_L K_S.
  \end{eqnarray*}

  By definition of $b$, we see that at least one among $E_7$ and $E_9$ appears with  coefficient  $0$. Hence, in the expression $\sum_{i=1}^7(a_i-b)E_i+(a_9-a_0-b)E_9$ there are at most $7$ nonzero terms, and we may rearrange them so that the coefficients appear in decreasing order, that is, so that \eqref{eq:condcoff} is satisfied. Moreover, we see that $(a_0+2b)+(a_{10}+2b)=a_0+a_{10}+4b \geq a_0+3b$. If $a_0 \geq a_{10}$, we see that also $a_0+3b \geq a_0+2b \geq a_{10}+2b$, whence \eqref{eq:condcoff'} is satisfied. If instead $a_0 < a_{10}$, we set $E'_9:=E_{10}$, $a'_9:=a_{10}+2b$, $E'_{10}:=E_8$, $a'_{10}:=a_0+2b$, $E'_{9,10}:=E_{8,10}$ and $a'_0:=a_0+3b$; then we rewrite
  \[ (a_0+2b)E_8+(a_{10}+2b)E_{10}+(a_0+3b)E_{8,10}=a'_9E'_9+a'_{10}E'_{10}+a'_0E'_{9,10},
    \]
with $a'_9+a'_{10} \geq a'_0$, $a'_9 >a'_{10}$ and $a'_0 \geq a'_{10}$. If $a'_0 \geq a'_9$ (which happens if and only if $a_0+b \geq a_{10}$), we are done. If $a'_0 <a'_9$, we repeat the process from the start, which this time will give the desired decomposition, since $a'_0 \geq a'_{10}$.

{\bf Case $a_9+a_{10} <a_0$.} Recalling \eqref{eq:ijk}, we have
  \begin{eqnarray*}
    L & \sim & a_1E_1+\cdots + a_7E_7+a_9E_9+a_{10}E_{10}+a_0E_{9,10}+\varepsilon_L K_S \\
         & \sim & \sum_{i=1}^7a_iE_i+a_9(E_9+E_{9,10})+
                              a_{10}(E_{10}+E_{9,10})+(a_0-a_9-a_{10})E_{9,10} +\varepsilon_L K_S \\
         & \sim & \sum_{i=1}^7a_iE_i+a_9(E_8+E_{8,10})+
                              a_{10}(E_8+E_{8,9}) +(a_0-a_9-a_{10})E_{9,10} + \varepsilon_L K_S \\
     & \sim & \sum_{i=1}^7a_iE_i + (a_9+a_{10})E_8+a_9E_{8,10}+a_{10}E_{8,9}+(a_0-a_9-a_{10})E_{9,10}+  \varepsilon_L K_S.
     \end{eqnarray*}
     We note that $\{E_1,\ldots,E_7,E_{8,10},E_{8,9},E_{9,10}\}$ is an isotropic $10$-sequence, and $E_8 \sim \frac{1}{3}\left(E_1+\cdots+E_7+E_{8,10}+E_{8,9}+E_{9,10}\right)-E_{8,10}-E_{8,9}$ (cf. Lemma \ref{lemma:ceraprima}). Thus, setting  $E'_8:=E_{9,10}$,
     $E'_9:=E_{8,10}$,  $E'_{10}:=E_{8,9}$ and  $E'_{9,10}:=E_8$, and $b:=\min\{a_7,a_0-a_9-a_{10}\}$,
 we may rewrite as
 \begin{eqnarray*}
L     & \sim &  \sum_{i=1}^7a_iE_i+(a_0-a_9-a_{10})E'_8+a_9E'_9+a_{10}E'_{10}+(a_9+a_{10})E'_{9,10} + \varepsilon_L K_S \\
      & \sim & b(E_1+\cdots+E_7+E'_8+E'_9+E'_{10})+\sum_{i=1}^7(a_i-b)E_i +(a_0-a_9-a_{10}-b)E'_8\\
           & & \hspace{2cm} +(a_9-b)E'_9+(a_{10}-b)E'_{10}+(a_9+a_{10})E'_{9,10} + \varepsilon_L K_S \\
 & \sim & 3b(E'_9+E'_{10}+E'_{9,10})+ \sum_{i=1}^7(a_i-b)E_i +(a_0-a_9-a_{10}-b)E'_8\\
           & & \hspace{2cm} +(a_9-b)E'_9+(a_{10}-b)E'_{10}+(a_9+a_{10})E'_{9,10} + \varepsilon_L K_S \\
           & \sim & \sum_{i=1}^7(a_i-b)E_i +(a_0-a_9-a_{10}-b)E'_8  \\
           & & \hspace{2cm}+ (a_9+2b)E'_9+(a_{10}+2b)E'_{10}+ (3b+a_9+a_{10})E'_{9,10} +\varepsilon_L K_S.                  
 \end{eqnarray*} 
 By definition of $b$, we see that at least one of $E_7$ and $E'_8$ appears with coefficient $0$. Hence, in the expression $\sum_{i=1}^7(a_i-b)E_i+(a_0-a_9-a_{10}-b)E'_8$ there are at most $7$ nonzero terms, and we may rearrange them so that the coefficients appear in decreasing order, that is, so that \eqref{eq:condcoff} is satisfied. We also see that the coefficients in front of $E'_9$, $E'_{10}$ and  $E'_{9,10}$ satisfy the conditions \eqref{eq:condcoff'}.

Finally, the fact that \eqref{eq:scrivoL} is a simple isotropic decomposition
is easy to check.
\end{proof}

\begin{remark} \label{rem:eps}
  Recalling \eqref{eq:defeps}, we have by \cite[Lemma 4.8]{cdgk}
  that
  \[
    \varepsilon_L= \begin{cases} 0, & \mbox{if some $a_i$ is odd} \\
0 \; \mbox{or} \; 1, & \mbox{if all $a_i$ are even,} 
\end{cases}
\]
with the $a_i$s as in Proposition \ref{prop:sid}.
\end{remark}

\section{Flat limits of Enriques surfaces} \label{sec:degenr}

Let $E$ be a smooth elliptic curve. Denote by $\+$ (and $\ominus$) the group operation on $E$  and by $e_0$ the neutral element. Let $R:=\Sym^2(E)$ and
$\pi: R \to E$ be the (Albanese) projection map sending $x+y$ to $x\+ y$.
We denote the fiber of $\pi$ over a point $e \in E$ by 
\[ \f_e:=\pi^{-1}(e)=\{ x+y \in \Sym^2(E) \; | \; x\+ y=e \; \mbox{(equivalently,} \; 
x + y \sim e+e_0)\},\]
which is the $\PP^1$ defined by the linear system $|e+e_0|$. We denote the algebraic equivalence class of the fibers by $\f$. 

For each $e \in E$, we define the curve $\s_e$ (called $D_e$ in \cite{CaCi})
as the image of the section $E \to R$ mapping $x$ to $e+ (x \ominus e)$.
We let $\s$ denote the algebraic equivalence class of these sections, which are the ones with minimal self-intersection, namely $1$, cf. \cite{CaCi}.  We note for later use that
 for $x \neq y$ we have
\begin{equation} \label{eq:duesez} 
\s_x \cap \s_y=\{ x+y\}.
\end{equation}
We also note that we have
\begin{equation} \label{eq:can} 
K_R \sim -2\s_{e_0}+\f_{e_0}.
\end{equation}

For any of the three nonzero $2$-torsion points $\eta$ of $E$ the map $E \to R$ defined by mapping $e$ to $e + (e \+ \eta)$ realizes $E$ as an unramified double cover of its image curve
\[ T_{\eta}:= \{ e+ (e\+\eta) \; | \; e \in E\}. \]
We have
\begin{equation} \label{eq:T} 
T_{\eta} \sim -K_R+\f_{\eta}-f_{e_0} \sim 2\s_{e_0}-2\f_{e_0}+f_{\eta},
\end{equation}
by \cite[(2.10)]{CaCi}. In particular,
\begin{equation} \label{eq:T2} 
  T_{\eta} \not \sim -K_R \; \; \mbox{and} \; \; 2T_{\eta} \sim -2K_R.
\end{equation}

We henceforth fix $\eta$ and set $T:=T_{\eta}$.
For later use we gather a  few  lemmas here:

\begin{lemma} \label{lemma:g1}
We have $h^i(\T_R(-T))=0$ for all $i$.
\end{lemma}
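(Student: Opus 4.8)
The statement to prove is $h^i(\mathcal{T}_R(-T))=0$ for all $i$, where $R=\Sym^2(E)$, $\mathcal{T}_R$ is its tangent sheaf, and $T=T_\eta$ is one of the bielliptic curves described above. The plan is to compute these cohomology groups via the geometry of the projection $\pi\colon R\to E$ and the relative Euler/normal sequences, exploiting the fact that $R$ is a $\PP^1$-bundle over the elliptic curve $E$ (indeed $R=\PP(\mathcal{E})$ for a rank-two bundle $\mathcal{E}$ on $E$, since every section class $\s_e$ gives a section of $\pi$). I would first record $T$ numerically: by \eqref{eq:T}, $T\sim 2\s_{e_0}-2\f_{e_0}+\f_\eta$, so $T$ has $T\cdot\f=2$ and $T^2=0$ (as $T$ is a smooth genus-one bielliptic curve, being an unramified quotient of $E$), and $\mathcal{O}_R(-T)$ restricted to a fiber $\f_e\cong\PP^1$ is $\mathcal{O}_{\PP^1}(-2)$.

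\textbf{Key steps.} First I would use the relative tangent sequence for $\pi\colon R\to E$,
\[
0\longrightarrow \mathcal{T}_{R/E}\longrightarrow \mathcal{T}_R\longrightarrow \pi^*\mathcal{T}_E\longrightarrow 0,
\]
and note $\mathcal{T}_E\cong\mathcal{O}_E$ since $E$ is elliptic, so $\pi^*\mathcal{T}_E\cong\mathcal{O}_R$. Twisting by $\mathcal{O}_R(-T)$ gives
\[
0\longrightarrow \mathcal{T}_{R/E}(-T)\longrightarrow \mathcal{T}_R(-T)\longrightarrow \mathcal{O}_R(-T)\longrightarrow 0.
\]
It therefore suffices to show $h^i(\mathcal{O}_R(-T))=0$ and $h^i(\mathcal{T}_{R/E}(-T))=0$ for all $i$, and then read off the conclusion from the long exact sequence (the connecting maps are irrelevant once both outer terms vanish). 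For $h^i(\mathcal{O}_R(-T))$: push forward by $\pi$. Since $\mathcal{O}_R(-T)|_{\f_e}\cong\mathcal{O}_{\PP^1}(-2)$, we get $\pi_*\mathcal{O}_R(-T)=0$ and $R^1\pi_*\mathcal{O}_R(-T)$ is a line bundle on $E$ whose degree can be computed by relative duality or by a direct Leray/Riemann–Roch bookkeeping; the vanishing $h^i(\mathcal{O}_R(-T))=0$ then reduces to checking that this degree-$d$ line bundle on the elliptic curve $E$ is non-trivial (so both its $h^0$ and $h^1$ vanish when $d=0$, or automatically when $d\neq 0$). Here the $2$-torsion twist by $\f_\eta$ rather than $\f_{e_0}$ in \eqref{eq:T} should be exactly what makes the relevant line bundle on $E$ non-trivial — this is the place where one genuinely uses $T\not\sim -K_R$, cf. \eqref{eq:T2}. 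For $h^i(\mathcal{T}_{R/E}(-T))$: $\mathcal{T}_{R/E}$ restricts to $\mathcal{O}_{\PP^1}(2)$ on each fiber, so $\mathcal{T}_{R/E}(-T)$ restricts to $\mathcal{O}_{\PP^1}(0)$; thus $\pi_*(\mathcal{T}_{R/E}(-T))$ is a line bundle $N$ on $E$ and $R^1\pi_*(\mathcal{T}_{R/E}(-T))=0$. By Leray, $h^i(\mathcal{T}_{R/E}(-T))=h^i(E,N)$, and again one must identify $N$ and check it is a non-trivial line bundle on the elliptic curve $E$. Both identifications can be done via $\mathcal{T}_{R/E}\cong\mathcal{O}_R(2\s_{e_0})\otimes\pi^*(\text{line bundle on }E)$ — equivalently $-K_{R/E}=2\s_{e_0}-2\f_{e_0}+\pi^*(\cdots)$ from \eqref{eq:can} — and then the twist by $-T$ from \eqref{eq:T} contributes the crucial $\mathcal{O}_E(\eta-e_0)$-type factor, which is a non-trivial $2$-torsion (or otherwise non-trivial) element of $\Pic^0 E$.

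\textbf{Main obstacle.} The routine part is the fiberwise degree count; the genuinely delicate point is verifying that the two line bundles $R^1\pi_*\mathcal{O}_R(-T)$ and $\pi_*(\mathcal{T}_{R/E}(-T))$ on $E$ are \emph{non-trivial}, since a degree-zero line bundle on an elliptic curve has cohomology vanishing only if it is non-trivial. I expect this to come down precisely to the non-torsion-triviality encoded in $T_\eta\not\sim -K_R$ from \eqref{eq:T2}: the class $\f_\eta-\f_{e_0}$ pulls back from the non-trivial $2$-torsion point $\eta-e_0\in\Pic^0 E$, and one must track it carefully through the two push-forwards. An alternative, perhaps cleaner, route that avoids some of this bookkeeping is to use Serre duality directly: $h^i(\mathcal{T}_R(-T))=h^{2-i}(\Omega^1_R(T)\otimes K_R)=h^{2-i}(\Omega^1_R(T+K_R))$, and since $T+K_R\sim \f_\eta-\f_{e_0}=\pi^*(\eta-e_0)$ is the pullback of a non-trivial degree-zero line bundle on $E$, one reduces to $h^j(\Omega^1_R\otimes\pi^*\mathcal{O}_E(\eta-e_0))$, which one handles with the cotangent analogue $0\to\pi^*\Omega^1_E\to\Omega^1_R\to\Omega^1_{R/E}\to 0$; here $\pi^*\Omega^1_E\cong\mathcal{O}_R$ again, and the required vanishings on $E$ involve only non-trivial degree-zero or negative-degree line bundles on the elliptic curve, which is immediate. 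I would likely present whichever of these two computations is shorter, but both hinge on the same single arithmetic fact about $\eta$.
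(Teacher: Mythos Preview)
Your approach is correct and uses the same exact sequence as the paper: the relative tangent sequence of $\pi$ twisted by $-T$. The paper's execution is considerably shorter, however. Since $E$ is elliptic, $K_{R/E}=K_R$, so $\T_{R/E}\cong\O_R(-K_R)$; the sequence is therefore simply
\[
0 \longrightarrow \O_R(-K_R-T) \longrightarrow \T_R(-T) \longrightarrow \O_R(-T) \longrightarrow 0,
\]
and both outer terms are explicit line bundles. From \eqref{eq:T} one has $-K_R-T\sim \f_{e_0}-\f_\eta$ and $K_R+T\sim \f_\eta-\f_{e_0}$, so $h^0$ and $h^2$ of each vanish immediately (neither $\pm(\f_\eta-\f_{e_0})$ nor $-T$ nor $K_R+T+K_R$ is effective), and Riemann--Roch on $R$ gives $\chi=0$ since $T\equiv -K_R$ and $K_R^2=0$. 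Your Leray/pushforward computation would reach the same conclusion---and you correctly identify that the crucial input is the non-triviality of $\eta-e_0\in\Pic^0 E$---but the detour through $R^1\pi_*$ and the identification of $N=\pi_*(\T_{R/E}(-T))$ is unnecessary once you recognise $\T_{R/E}$ as a line bundle on $R$. Your ``alternative route'' via Serre duality is in fact closer in spirit to what the paper does.
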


\begin{proof}
We first note that \eqref{eq:T}, Serre duality and Riemann-Roch imply that
\begin{equation}
  \label{eq:coh-K-T}
  h^i(\O_R(-K_R-T))=h^i(\O_R(-T))=0 \; \; \mbox{for all} \; \; i.
\end{equation}
Then the lemma follows from the sequence
\[
 0 \longrightarrow \O_R(-K_R-T) \longrightarrow \T_R(-T) \longrightarrow \O_R(-T) \longrightarrow 0,
\]
which is the dual of the sequence of relative differentials of $\pi$ tensored by $\O_R(-T)$.
\end{proof}

\begin{lemma} \label{lemma:g2}
We have 
\begin{equation}
  \label{eq:relsf}
  \s_x + \f_y \sim \s_y+\f_x \; \; \mbox{for all} \; \; x,y \in E.
\end{equation}
In particular,
\begin{equation}
  \label{eq:relsf2}
  \s_x + \f_{\eta} \sim \s_{x\+\eta}+\f_{e_0} \; \; \mbox{for all} \; \; x \in E.
\end{equation}
\end{lemma}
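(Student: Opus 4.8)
The plan is to prove Lemma~\ref{lemma:g2} purely on the level of linear equivalence classes on $R = \Sym^2(E)$, exploiting the fact that $\Pic R$ is well understood: modulo algebraic equivalence it is generated by $\s$ and $\f$, and $\Pic^0 R \cong \Pic^0 E \cong E$ via the Albanese map $\pi$. The key observation is that $\s_x + \f_y$ and $\s_y + \f_x$ are both algebraically equivalent to $\s + 2\f$ (since $\s_x \equiv \s$, $\f_y \equiv \f$, etc.), so it suffices to show that their difference is trivial in $\Pic^0 R$. First I would compute $\pi_*$ (or pull back along a section) to reduce the identity to one on $E$: under $\pi$, the class $\f_y$ maps to $0$ (fibers are contracted), while $\s_x$ restricts on each fiber to a single point, so $\pi_* \s_x$ is a degree-one divisor class on $E$ whose class I would identify explicitly. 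The section defining $\s_x$ sends $t \mapsto x + (t \ominus x)$, which composed with $\pi$ is the identity on $E$; tracking the marked point, $\s_x$ should correspond under the isomorphism $\Pic^0 R \cong E$ to something like the class of $x$ (or $x$ shifted by a fixed constant depending on $e_0$).

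Concretely, I would proceed as follows. Since $\s_x - \s_y$ is algebraically trivial, it defines a point of $\Pic^0 R \cong E$; I claim this point is $x \ominus y$ (up to a universal sign). To see this, intersect with a fiber: $\s_x \cdot \f = \s_y \cdot \f = 1$, and the restriction $\s_x|_{\f_e}$ is the single point $e + (\text{something})$ on the $\PP^1 = |e + e_0|$ — this is consistent but not yet enough. The cleanest route is to use the description of $\Pic R$ from \cite{CaCi}: there one has $\s_x \sim \s_{e_0} + \pi^*(x - e_0)$ for all $x$ (this is essentially the defining property of how the sections $\s_x$ vary in their algebraic equivalence class — they form the family $\{\s_{e_0} + \pi^* D : D \in \Pic^0 E\}$), and $\f_y = \pi^*(y - e_0 + e_0) \cdots$; more precisely $\f_y \sim \pi^*(\text{the point } y)$ as a divisor on $E$ pulled back, i.e. $\f_y - \f_{e_0} \sim \pi^*(y - e_0)$. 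Granting these two facts, the computation is immediate:
\[
\s_x + \f_y \sim \s_{e_0} + \pi^*(x - e_0) + \f_{e_0} + \pi^*(y - e_0) = \s_{e_0} + \f_{e_0} + \pi^*(x + y - 2e_0),
\]
which is manifestly symmetric in $x$ and $y$, giving $\s_x + \f_y \sim \s_y + \f_x$. Then \eqref{eq:relsf2} follows by specializing $x \mapsto x$, $y \mapsto \eta$ and using $\s_x + \f_\eta \sim \s_\eta + \f_x$ together with the analogous relation $\s_\eta \sim \s_{e_0} + \pi^*(\eta - e_0)$, rearranged as $\s_\eta + \f_x \sim \s_{x \+ \eta} + \f_{e_0}$ — one checks the $\Pic^0$ parts match: $\pi^*(\eta - e_0) + \pi^*(x - e_0) = \pi^*((x\+\eta) - e_0) + \pi^*(e_0 - e_0)$, i.e. $\eta + x \equiv (x\+\eta) + e_0$ in $\Pic^0 E$, which is exactly the group law $x \+ \eta = (x\+\eta)$ together with $e_0$ being neutral.

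The main obstacle is pinning down the precise relation $\s_x \sim \s_{e_0} + \pi^*(x - e_0)$ (and the sign conventions in identifying $\Pic^0 R$ with $E$), since the paper cites \cite{CaCi} for the structure of $\Pic R$ but the displayed formulas \eqref{eq:can}, \eqref{eq:T} only give algebraic-equivalence-plus-correction data. I would either extract this relation directly from \cite{CaCi} (it is implicit in their analysis of the sections $D_e = \s_e$ and their equation (2.10)), or prove it from scratch: the family $\{\s_e\}_{e \in E}$ is an algebraic family of divisors in a fixed algebraic equivalence class, hence gives a morphism $E \to \Pic^0 R \cong E$, $e \mapsto [\s_e - \s_{e_0}]$; this morphism is a group homomorphism up to translation (a morphism of abelian varieties is affine), it is nonconstant (the $\s_e$ are genuinely distinct, e.g. by \eqref{eq:duesez}), hence an isogeny, and a degree computation — intersecting with $T_\eta$ or using \eqref{eq:duesez} which shows $\s_x$ and $\s_y$ meet in the single reduced point $x+y$ — forces it to be an isomorphism, namely (a translate of) the identity. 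Once that is in hand, everything else is the short bookkeeping above, and there is no serious analytic or cohomological input — it is entirely a computation in $\Pic R$.
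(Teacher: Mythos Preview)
Your approach is correct, but it is more elaborate than the paper's. The paper's proof is essentially two lines: restrict both $\s_x-\s_y$ and $\f_x-\f_y$ to the section $\s_{e_0}$, observe (using \eqref{eq:duesez} and the fact that $\pi|_{\s_{e_0}}:\s_{e_0}\to E$ is an isomorphism) that each restricts to $\pi|_{\s_{e_0}}^*(x-y)$, and then invoke \cite[Prop.~(2.11)]{CaCi}, which says that two line bundles on $R$ with the same restriction to a section are isomorphic. For \eqref{eq:relsf2} the paper simply sets $y:=x\+\eta$ in \eqref{eq:relsf} and rewrites fibers via the group law $x\+\eta\sim x+\eta-e_0$ on $E$.

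Your route through the explicit formula $\s_x\sim\s_{e_0}+\pi^*(x-e_0)$ also works, but note that the ``main obstacle'' you flag---pinning down this relation with the correct sign rather than merely up to an unspecified automorphism of $E$---is most cleanly resolved by exactly the restriction-to-a-section computation the paper uses. (Your family argument shows $e\mapsto[\s_e-\s_{e_0}]$ is an isogeny $E\to\Pic^0R\cong E$, but does not by itself rule out that it is multiplication by $-1$; one still has to evaluate it somewhere, and restricting to $\s_{e_0}$ is the natural choice.) So your completed argument would contain the paper's as a sublemma. The upside of your approach is that it makes the structural formula for $\s_x$ in $\Pic R$ explicit, which can be useful elsewhere; the paper's upside is brevity, since it bypasses describing $\Pic^0R$ altogether and appeals directly to the injectivity statement from \cite{CaCi}.
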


\begin{proof}
Restricting to $\s_{e_0}$ and using the isomorphism $\pi|_{\s_{e_0}}:\s_{e_0} \to E$, we have
\[ (\s_x-\s_y)|_{\s_{e_0}} \sim \pi|_{\s_{e_0}}^*(x-y) \sim (\f_x-\f_y)|_{\s_{e_0}},\]
and \eqref{eq:relsf} follows from the special case of \cite[Prop. (2.11)]{CaCi}
stating that two line bundles on $R$ with the same restriction to a section are isomorphic. Setting $y:=x\+\eta$ in \eqref{eq:relsf} and using the group law ($x \+ y \sim x+y-e_0$) on $E$, we obtain \eqref{eq:relsf2}.
  \end{proof}

\begin{lemma} \label{lemma:restaT}
  If $\B \in \Pic^0 E$ such that $\pi^*\B|_{T}$ is trivial, then $\B \cong \O_E$ or $\B \cong \O_E(\eta-e_0)$.
\end{lemma}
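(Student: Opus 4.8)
The plan is to identify the kernel $K$ of the restriction homomorphism
\[
(\pi|_T)^*\colon \Pic^0 E\longrightarrow\Pic^0 T,\qquad \B\longmapsto \pi^*\B|_T
\]
(note that $\pi^*\B|_T=(\pi|_T)^*\B$, since $\pi|_T$ is the composite of the inclusion $T\hookrightarrow R$ with $\pi$) and to show that $K=\{\O_E,\O_E(\eta-e_0)\}$; since the hypothesis says precisely that $\B\in K$, the lemma follows.

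First I would check that $\pi|_T\colon T\to E$ is a connected étale double cover. It is finite (an irreducible curve of genus one contains no fibre of $\pi$) and surjective, of degree $\f\cdot T$; by \eqref{eq:T}, using $\s^2=1$, $\s\cdot\f=1$ and $\f^2=0$, one computes $\f\cdot T=2\cdot1-2\cdot0+0=2$. As recalled above $E$ is an unramified degree-two cover of $T$, so $T$ is a smooth curve of genus one; hence $\pi|_T$, a degree-two morphism between smooth genus-one curves, is unramified by Riemann--Hurwitz, therefore étale, and $T$ is connected. Consequently $\pi|_T$ is the étale double cover associated with a unique nontrivial $2$-torsion class $\alpha\in\Pic^0 E$, and from $(\pi|_T)_*\O_T\cong\O_E\oplus\alpha^{-1}$ one obtains in the usual way that $K=\{\O_E,\alpha\}$. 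It remains only to determine $\alpha$.

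To do this I would verify that $\pi^*\O_E(\eta-e_0)|_T$ is trivial. Since $\pi^*\O_E(\eta-e_0)\cong\O_R(\f_\eta-\f_{e_0})$, relation \eqref{eq:relsf2} with $x=e_0$ gives $\pi^*\O_E(\eta-e_0)\cong\O_R(\s_\eta-\s_{e_0})$, so it suffices to show that $\s_\eta$ and $\s_{e_0}$ restrict to the same divisor on $T$. From \eqref{eq:T} one computes $\s_{e_0}\cdot T=\s_\eta\cdot T=1$, so each of $\s_{e_0}\cap T$ and $\s_\eta\cap T$ is a single reduced point; and since $\s_e$ is the locus of length-two subschemes of $E$ containing $e$, a point $e'+(e'\oplus\eta)$ of $T$ lies on $\s_{e_0}$ (respectively on $\s_\eta$) if and only if $e'\in\{e_0,\eta\}$, and in each case the corresponding point of $T$ is $e_0+\eta$. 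Hence $\s_\eta|_T=\s_{e_0}|_T$, so $\O_E(\eta-e_0)\in K$; since $\eta\ne e_0$ this forces $\alpha=\O_E(\eta-e_0)$, and therefore $K=\{\O_E,\O_E(\eta-e_0)\}$.

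The main obstacle is this last step: the class $\f_\eta-\f_{e_0}$ does not restrict to $T$ in a transparent way, because each fibre $\f_x$ meets $T$ in two moving points, so the idea is to use \eqref{eq:relsf2} to replace fibres by sections, after which restriction to $T$ is controlled by the elementary description of the curves $\s_e$. As an alternative to invoking the structure theory of étale double covers, one can observe that $\mathrm{Nm}_{\pi|_T}\circ(\pi|_T)^*$ is multiplication by $2$, whence $K$ is contained in the $2$-torsion of $\Pic^0 E$, and then run the same restriction computation on the other two nonzero $2$-torsion classes to exclude them (their associated sections meet $T$ in reduced points different from $e_0+\eta$).
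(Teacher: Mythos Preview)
Your proof is correct and takes a genuinely different route from the paper's. The paper argues cohomologically: writing $\B\cong\O_E(x-e_0)$ with $x\neq e_0$, it tensors the ideal sequence of $T\subset R$ by $\O_R(\f_x-\f_{e_0})$, uses that $h^i(\O_R(\f_x-\f_{e_0}))=0$ for all $i$ to force $h^2(\O_R(K_R+\f_x-\f_\eta))=h^1(\O_T)=1$, and concludes $x=\eta$ by Serre duality. Your approach is more structural: you recognize the question as computing the kernel of pullback along the \'etale double cover $\pi|_T\colon T\to E$, which is necessarily $\{\O_E,\alpha\}$ for the defining $2$-torsion class $\alpha$, and then you identify $\alpha=\O_E(\eta-e_0)$ by the pleasant geometric observation (after replacing $\f_\eta-\f_{e_0}$ by $\s_\eta-\s_{e_0}$ via \eqref{eq:relsf2}) that $\s_{e_0}$ and $\s_\eta$ meet $T$ in the same single reduced point $e_0+\eta$. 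The paper's argument is shorter and uses only elementary sheaf cohomology already set up in the surrounding text; yours explains \emph{why} the kernel has exactly this shape and makes the link with the covering structure of $\pi|_T$ transparent.
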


\begin{proof}
  Any $\B \in \Pic^0 E$ can be written as $\B \cong \O_E(x-e_0)$ for some $x \in E$. Assume that $x \neq e_0$. Since $\pi^*\B|_{T} \cong \O_T(\f_x-\f_{e_0})$ is trivial, we have, using \eqref{eq:T}, a short exact sequence 
\[
 0 \longrightarrow \O_R(\f_x-\f_{e_0}-T)\cong \O_R(K_R+\f_x-\f_{\eta})  \longrightarrow \O_R(\f_x-\f_{e_0}) \longrightarrow \O_{T} \longrightarrow 0.
\]
As $h^i(\O_R(\f_x-\f_{e_0}))=0$ for $i=0,1,2$, we must have $h^2(\O_R(K_R+\f_x-\f_{\eta}))=h^1(\O_T)=1$, whence $x=\eta$ by Serre duality, finishing the proof.
\end{proof}

Embed $T$ as a cubic in $\PP^2$. Consider distinct points  $x_1,\ldots,x_9 \in T$ such that 
\begin{equation}
  \label{eq:cond}
  x_1+\cdots+x_9 \in |\N_{T/R} \* \N_{T/\PP^2}|.
\end{equation}
Let $\sigma_R:\widetilde{R} \to R$ be the blow up at $x_1,x_9$ and $\sigma_P:\widetilde{P} \to P:=\PP^2$ be the blow up at $x_2,\ldots,x_8$. We will always assume the points $x_i$ to be sufficiently general, so that
\begin{equation} \label{eq:dp}
  \widetilde{P} \; \; \mbox{is Del Pezzo (whence contains no $(-2)$-curves), and }
\end{equation}
\begin{equation} \label{eq:df}
\mbox{$x_1$ and $x_9$ lie on distinct fibers of $\pi:R \to E$.}
\end{equation}
We denote by $\ell$ on $\widetilde{P}$ the pullback of a general line on $\PP^2$ and by $\e_i$ the exceptional divisor over $x_i$, $ i \in \{2,\ldots,8\}$. By abuse of notation we denote by $\s$ and $\f$ the pullbacks of sections and fibers on $\widetilde{R}$ and by $\e_i$ the exceptional divisor over $x_i$, $ i \in \{1,9\}$. We still denote by $\pi$ the composed map
$\widetilde{R} \to R \to E$. 
By abuse of notation we denote by $T$ the strict transform of $T$ in both $\widetilde{R}$ and $\widetilde{P}$. We have (cf. \eqref{eq:T}-\eqref{eq:T2})
\begin{eqnarray} \label{eq:T3} 
  T \sim   2\s_{e_0}-2\f_{e_0}+\f_{\eta}-\e_1-\e_9 \not \sim -K_{\widetilde{R}}, \; \; \; 2T  \sim -2K_{\widetilde{R}} & \; \; \mbox{on} \; \; \widetilde{R}, \\
 \label{eq:TP} T \sim   3\ell-\e_2-\cdots-\e_8  \sim -K_{\widetilde{P}} & \; \; \mbox{on} \; \; \widetilde{P}.
\end{eqnarray}
Define
$X:=\widetilde{R} \cup_T \widetilde{P}$ as the surface obtained by  gluing
$\widetilde{R}$ and $\widetilde{P}$ along $T$. 
The {\it first cotangent sheaf} $T^1_{X}:=\Shext^1_{\O_X}(\Omega_{X},\O_{X})$ of $X$ (cf. \cite[Cor. 1.1.11]{ser} or \cite[\S 2]{fri}), 
satisfies
\begin{equation} \label{eq:ss}
 T^1_{X} \cong \N_{T/\widetilde{R}} \* \N_{T/\widetilde{P}} \cong \O_T.
\end{equation}
by \cite[Prop. 2.3]{fri}, because of \eqref{eq:cond}.  
Thus, $X$ is {\it semi-stable}, cf. \cite[Def. (1.13)]{fri} and \cite[(0.4)]{fri2}. We will denote by $\D$ the family of surfaces $X$ obtained in this way.
It is easy to see that $\D$ is irreducible of dimension $9$.

\begin{lemma} \label{lemma:restcoho}
  We have $h^0(\O_X)=1$, $h^1(\O_X)=h^2(\O_X)=0$. In particular, $\Pic X \cong H^2(X,\ZZ)$.
\end{lemma}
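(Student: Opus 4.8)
The plan is to compute the cohomology of $\O_X$ from the Mayer--Vietoris-type sequence associated to the gluing $X = \widetilde{R} \cup_T \widetilde{P}$, namely
\[
0 \longrightarrow \O_X \longrightarrow \O_{\widetilde{R}} \oplus \O_{\widetilde{P}} \longrightarrow \O_T \longrightarrow 0.
\]
Taking the long exact sequence in cohomology, the key inputs are the cohomology of the structure sheaves of the two components and of the gluing curve $T$. First I would record that $\widetilde{P}$ is a (blow-up of a) rational surface, hence $h^0(\O_{\widetilde{P}})=1$ and $h^1(\O_{\widetilde{P}})=h^2(\O_{\widetilde{P}})=0$; that $\widetilde{R}$ is a blow-up of $R=\Sym^2(E)$ at two points, so $h^0(\O_{\widetilde{R}})=1$, $h^1(\O_{\widetilde{R}})=h^1(\O_R)=1$ (coming from the elliptic base $E$ via the Albanese map $\pi$), and $h^2(\O_{\widetilde{R}})=h^2(\O_R)=0$ (since $R$ has irregularity $1$ and geometric genus $0$ by \eqref{eq:can}); and that $T$ is a smooth elliptic curve, so $h^0(\O_T)=h^1(\O_T)=1$.

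Plugging these into the long exact sequence gives
\[
0 \to H^0(\O_X) \to \CC^2 \to \CC \to H^1(\O_X) \to \CC \xrightarrow{\ r\ } \CC \to H^2(\O_X) \to 0,
\]
where the first map $\CC^2 \to \CC$ is $(s,t)\mapsto s|_T - t|_T$ on constants, hence surjective with one-dimensional kernel, giving $h^0(\O_X)=1$ immediately. The crux is then the restriction map $r\colon H^1(\O_{\widetilde{R}}) \to H^1(\O_T)$: I claim it is an isomorphism, which forces $h^1(\O_X)=h^2(\O_X)=0$. This is the step I expect to be the main obstacle, since it requires genuinely understanding the geometry rather than just Euler characteristics. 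The cleanest argument is via the Albanese: $H^1(\O_{\widetilde{R}}) \cong H^1(\O_R) \cong \pi^* H^1(\O_E)$, and the composite $T \hookrightarrow \widetilde{R} \to R \xrightarrow{\pi} E$ is, by construction of $T=T_\eta$, an \'etale double cover $E \to E$ (the map $e \mapsto e+(e\oplus\eta)$ followed by $\pi$ is multiplication-type, in any case a nonconstant isogeny), so the pullback $H^1(\O_E) \to H^1(\O_T)$ along a finite map of elliptic curves is injective; since both sides are one-dimensional it is an isomorphism. Equivalently, one can argue that $h^1(\O_T(-? )) $ vanishes appropriately, or invoke that the restriction $\Pic^0(\widetilde{R}) \to \Pic^0(T)$ is an isogeny (hence the tangent-space map $r$ is an isomorphism), using that $T \to E$ is finite of degree $2$ and Lemma~\ref{lemma:restaT} controls the kernel.

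Finally, the ``in particular'' follows: the exponential sequence on $X$ gives $\Pic X = H^1(\O_X^*)$, and with $H^1(\O_X) = H^2(\O_X) = 0$ the exponential sequence degenerates to an isomorphism $\Pic X \xrightarrow{\sim} H^2(X,\ZZ)$ (there is no torsion obstruction since $H^1(\O_X)=0$ kills the image of $H^1(X,\ZZ)$-related terms, and $H^2(\O_X)=0$ makes $c_1$ surjective). I would state this in one line. The only mild subtlety worth a sentence is ensuring that the genericity hypotheses \eqref{eq:dp}--\eqref{eq:df} and the condition \eqref{eq:cond} are not secretly needed here — they are not: the cohomology computation above only uses that $\widetilde{P}$ is rational, that $\widetilde{R}$ is a blow-up of $\Sym^2(E)$, and that $T=T_\eta$ maps finitely onto $E$, all of which hold unconditionally.
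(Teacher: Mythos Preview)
Your argument is correct and reaches the same conclusion, but the route differs from the paper's. The paper uses the decomposition sequence
\[
0 \longrightarrow \O_{\widetilde{R}}(-T) \longrightarrow \O_X \longrightarrow \O_{\widetilde{P}} \longrightarrow 0
\]
and shows directly that $h^i(\O_{\widetilde{R}}(-T))=0$ for all $i$: one has $h^0=0$ trivially, $h^2(\O_{\widetilde{R}}(-T))=h^0(K_{\widetilde{R}}+T)=0$ by Serre duality and the crucial fact \eqref{eq:T3} that $T$ is \emph{numerically} but not \emph{linearly} anticanonical on $\widetilde{R}$, and then $h^1=0$ by Riemann--Roch. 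This immediately gives $h^j(\O_X)=h^j(\O_{\widetilde{P}})$.

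Your Mayer--Vietoris approach is equivalent in content: the vanishing of $h^1$ and $h^2$ of $\O_{\widetilde{R}}(-T)$ is exactly the statement that the restriction $r\colon H^1(\O_{\widetilde{R}})\to H^1(\O_T)$ is an isomorphism, which you prove instead by a geometric Albanese argument (pulling back along the finite map $\pi|_T\colon T\to E$). The paper's Serre-duality trick is slightly slicker and makes the role of the non-anticanonical condition $T\not\sim -K_{\widetilde{R}}$ transparent; your approach is more hands-on but has the virtue of explaining \emph{why} $r$ is an isomorphism in terms of the geometry of $T$ over $E$. One small imprecision: the composite $T\hookrightarrow R\xrightarrow{\pi}E$ is a degree-$2$ isogeny $T\to E$ (not literally $E\to E$), since $T\cong E/\langle\eta\rangle$; but as you note, any nonconstant map between elliptic curves induces an isomorphism on $H^1(\O)$, so this does not affect the argument.
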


\begin{proof}
Consider the decomposition sequence
\[ \xymatrix{
      0 \ar[r] & \O_{\widetilde{R}}(-T)  \ar[r] & \O_X \ar[r] &  \O_{\widetilde{P}}  \ar[r] & 0. }\]
  Since $h^0(\O_{\widetilde{R}}(-T))=0$ and $h^2(\O_{\widetilde{R}}(-T))=h^0(\O_{\widetilde{R}}(K_{\widetilde{R}}+T))=0$
(because $T$ is not anticanonical on $\widetilde{R}$), 
also $h^1(\O_{\widetilde{R}}(-T))=0$ by Riemann-Roch.
We thus get $h^j(\O_X)=h^j(\O_{\widetilde{P}})$ for $j=0,1,2$, which has the stated values, as $\widetilde{P}$ is rational. The last statement follows from the cohomology of the exponential sequence.
\end{proof}

We recall that a Cartier divisor, or a line bundle, $\L \in \Pic X$, is a pair
$(L_{\widetilde{R}},L_{\widetilde{P}})$ such that $L_{\widetilde{R}} \in \Pic {\widetilde{R}}$, $L_{\widetilde{P}} \in \Pic {\widetilde{P}}$ and $L_{\widetilde{R}}|_T \cong
L_{\widetilde{P}}|_T$. We remark that since $T$ is numerically equivalent to the anticanonical divisor on both $\widetilde{R}$ and $\widetilde{P}$, we have
\begin{equation} \label{eq:even}
 \L^2=L_{\widetilde{R}}^2+L_{\widetilde{P}}^2=2p_a(L_{\widetilde{R}})-2+2p_a(L_{\widetilde{P}})-2+2d, \; \; d:=L_{\widetilde{R}}\cdot T=L_{\widetilde{P}}\cdot T,
\end{equation}  
so is even. As $\O_T(K_{\widetilde{R}}+T) \cong \omega_T \cong \O_T$, the canonical divisor $K_{X}$ is represented by 
\begin{equation}
  \label{eq:canrist}
  K_X=(K_{\widetilde{R}}+T,0)=(\f_{\eta}-\f_{e_0},0) \; \; \mbox{in} \; \; \Pic {\widetilde{R}} \x \Pic {\widetilde{P}}. 
\end{equation}
 In particular, by \eqref{eq:T3}-\eqref{eq:TP} we have
\begin{equation}
  \label{eq:doppioz}
  K_X \neq 0 \; \; \mbox{and} \; \; 2K_X=0.
\end{equation}
By \cite[(3.3)]{fri2} the surface $X$ also carries a Cartier divisor $\xi$ represented by the pair
\begin{equation} \label{eq:xi}
  \xi= (T,-T)\sim (2\s_{e_0}-2\f_{e_0}+\f_{\eta}-\e_1-\e_9,-3\ell+\e_2+\cdots+\e_8)
\; \; \mbox{in} \; \; \Pic {\widetilde{R}} \x
  \Pic {\widetilde{P}}.
\end{equation}

\begin{lemma} \label{lemma:onlytor}
  The Cartier divisor $K_X$ is the only nonzero torsion element of $\Pic X$.
\end{lemma}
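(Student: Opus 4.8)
The plan is to use the description of $\Pic X$ as pairs $(L_{\widetilde R},L_{\widetilde P})$ with matching restrictions to $T$, together with Lemma~\ref{lemma:restcoho} which gives $\Pic X \cong H^2(X,\ZZ)$ (so torsion in $\Pic X$ is the same as torsion in $H^2(X,\ZZ)$), and reduce everything to the (torsion-free) Picard groups of $\widetilde R$ and $\widetilde P$. Suppose $\L=(L_{\widetilde R},L_{\widetilde P})$ is torsion, say $n\L=0$ for some $n\geq 1$. Then $nL_{\widetilde R}=0$ in $\Pic\widetilde R$ and $nL_{\widetilde P}=0$ in $\Pic\widetilde P$. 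Since $\widetilde P$ is a (blow-up of a) rational surface, $\Pic\widetilde P$ is free, so $L_{\widetilde P}=0$. On $\widetilde R$, which is the blow-up of $R=\Sym^2 E$ at two points, we have $\Pic\widetilde R \cong \Pic R \oplus \ZZ\e_1 \oplus \ZZ\e_9$, and the only torsion in $\Pic R$ comes from $\Pic^0 E \cong E$ via $\pi^*$ (the fiber and section classes together with $\Pic^0$ generate, and the algebraic-equivalence quotient is free); concretely the torsion classes in $\Pic\widetilde R$ are exactly $\pi^*\B$ with $\B \in (\Pic^0 E)_{\mathrm{tors}}$, and these are honestly torsion only when... actually they are all torsion in $\Pic\widetilde R$ only if $n\B=0$, i.e. $\B$ is a torsion point of $E$. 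So a priori $L_{\widetilde R}=\pi^*\B$ for a torsion point $\B\in E$.

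Now impose the gluing condition: $\L\in\Pic X$ requires $L_{\widetilde R}|_T \cong L_{\widetilde P}|_T$. Since $L_{\widetilde P}=0$, we need $\pi^*\B|_T$ to be trivial. By Lemma~\ref{lemma:restaT} this forces $\B\cong\O_E$ or $\B\cong\O_E(\eta-e_0)$. The first gives $\L=0$. The second gives $L_{\widetilde R}=\pi^*\O_R(\f_\eta-\f_{e_0})$ (up to the identification $\pi^*\O_E(\eta-e_0)\cong\O_R(\f_\eta-\f_{e_0})$), and comparing with \eqref{eq:canrist} this is exactly the class of $K_X$ (recall $K_X=(\f_\eta-\f_{e_0},0)$ and $\f_\eta-\f_{e_0}$ restricts trivially to $T$ since $\omega_T\cong\O_T$, consistent with $L_{\widetilde P}=0$). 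By \eqref{eq:doppioz}, $K_X$ is indeed nonzero torsion. Hence the only nonzero torsion element of $\Pic X$ is $K_X$, which is what we want. One should double-check that $\O_E(\eta-e_0)$ is a $2$-torsion point of $E$ (it is, since $2\eta=2e_0$ as $\eta$ is a $2$-torsion point), so that $2L_{\widetilde R}=0$ and indeed $2\L=0$, matching $2K_X=0$.

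I expect the main obstacle to be pinning down precisely the torsion subgroup of $\Pic\widetilde R$, i.e. justifying that every torsion line bundle on $\widetilde R$ is of the form $\pi^*\B$ for a torsion $\B\in\Pic^0 E$. This follows from the structure of $\Pic$ of a (blow-up of a) ruled surface: $\Pic\widetilde R$ is generated by the exceptional classes $\e_1,\e_9$, the section class(es), the fiber class(es), and $\pi^*\Pic^0 E$; modulo algebraic equivalence $\NS\widetilde R$ is free (generated by $\s$, $\f$, $\e_1$, $\e_9$, using that $R=\Sym^2 E$ has $\NS R$ of rank $2$ freely generated by $\s$ and $\f$), so the torsion of $\Pic\widetilde R$ is $\Pic^0\widetilde R \cong \Pic^0 R \cong \Pic^0 E \cong E$ — wait, $\Pic^0$ is a divisible group, its torsion is $(\QQ/\ZZ)^2$, not trivial, so this is the correct source of all torsion. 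Thus the torsion classes in $\Pic\widetilde R$ are exactly $\pi^*\B$, $\B\in E$, and those killed by a fixed $n$ are $\pi^*\B$ with $\B\in E[n]$; any torsion element is killed by some $n$, so it lies in $\pi^*(\Pic^0 E)_{\mathrm{tors}}=\pi^*E_{\mathrm{tors}}$. With this in hand the argument above goes through; apart from this point everything is a direct application of the already-established lemmas and the explicit formula \eqref{eq:canrist}.
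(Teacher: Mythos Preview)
Your proof is correct and follows essentially the same route as the paper's: reduce to $L_{\widetilde P}=0$ by torsion-freeness of $\Pic\widetilde P$, identify the torsion in $\Pic\widetilde R$ with pullbacks $\pi^*\B$ from $\Pic^0 E$, and then apply Lemma~\ref{lemma:restaT} and \eqref{eq:canrist}. The paper's version is terser (it simply states $L_{\widetilde R}\cong\pi^*\B$ for some $\B\in\Pic^0 E$ without the extended discussion of the torsion structure), but the argument is identical; your one notational slip ``$\pi^*\O_R(\f_\eta-\f_{e_0})$'' should read $\O_{\widetilde R}(\f_\eta-\f_{e_0})$, as your own parenthetical makes clear.
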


\begin{proof}
  Assume that $\L$, represented by $(L_{\widetilde{R}},L_{\widetilde{P}})$ as above, is a torsion element of $\Pic X$. 
Since $\Pic \widetilde{P}$ is torsion-free, we must have $L_{\widetilde{P}}=\O_{\widetilde{P}}$. Moreover, since $L_{\widetilde{R}}$ is torsion in $\Pic \widetilde{R} \cong \sigma_R^* \Pic R \+ \ZZ[\e_1]\+\ZZ[\e_9]$, we must have $L_{\widetilde{R}} \cong \pi^* \B$ for a $\B \in \Pic^0(E)$
such that $\pi^*\B|_T \cong L_{\widetilde{P}}|_T \cong \O_T$.  By Lemma \ref{lemma:restaT} we have $\pi^*\B \cong \O_{\widetilde{R}}$ or $\pi^*\B \cong \O_{\widetilde R}(\f_{\eta}-f_{e_0})$. Thus, by \eqref{eq:canrist}, we have 
$\L \cong \O_X$ or $\L \cong \O_X(K_{X})$, as desired.
\end{proof}

We now find special effective primitive isotropic divisors on $X$ that will be used later.

For $j \in \{1,9\}$, the linear system $|\ell \* \I_{x_j}|$ on $P$ is a pencil inducing a 
$g^1_2$ on $T$, 
 which has, by Riemann-Hurwitz, two members that also belong to a fiber of $\pi_{|T}: T \to E$. In other words, there are two fibers $\f_{\alpha_j}$ and $\f_{\alpha'_j}$ of $\pi:\widetilde{R} \to E$ such that
\[
  (\f_{\alpha_j} \cup \e_j) \cap T  \in |\ell||_T \; \; \mbox{and} \; \;
 (\f_{\alpha'_j} \cup \e_j) \in |\ell||_T,\; \; j \in \{1,9\}.
\]
One easily verifies that $\alpha'_j=\alpha_j \+\eta$. In particular, there are two uniquely defined points $\alpha_j$ and $\alpha_j \+ \eta$ on $E$ such that the pairs
\[
  (\f_{\alpha_j}+\e_j,\ell) \; \; \mbox{and} \; \; (\f_{\alpha_j\+\eta}+\e_j,\ell),\; \; j \in \{1,9\},
\]
define Cartier divisors on $X$. It is easy to check, using the group law on $E$ and \eqref{eq:canrist}, that one is obtained from the other by tensoring with $K_{X}$.
We define
\[
  E_9:=(\f_{\alpha_9}+\e_9,\ell) \; \; \mbox{and} \; \; E_9+K_{X}=(\f_{\alpha_9\+\eta}+\e_9,\ell).
\]

Similarly, for each $i \in \{2,\ldots,8\}$ there are two uniquely defined points $\alpha_i$ and $\alpha_i \+ \eta$ on $E$ such that the pairs
\[
  E_i:=(\f_{\alpha_i},\ell-\e_i) \; \; \mbox{and} \; \; E_i+K_{X}=(\f_{\alpha_i\+\eta},\ell-\e_i), \; \; \mbox{for} \; \; i \in\{2,\ldots,8\},
\]
define Cartier divisors on ${X}$. 

Considering each point $x_i \in T$, for $i \in \{1,9\}$  as a point in $R=\Sym^2(E)$ we may write
$x_i=p_i+(p_i \+ \eta)$. There are two sections in $R$ passing through $x_i$, namely
$\s_{p_i}$ and $\s_{p_i \+ \eta}$, cf. \eqref{eq:duesez}. Thus, on $\widetilde{R}$ the pairs
\begin{equation}
  \label{eq:car2}
  (\s_{p_i}-\e_i,0) \; \; \mbox{and} \; \; (\s_{p_i\+ \eta}-\e_i,0), \; \; i \in\{1,9\}
\end{equation}
define Cartier divisors on ${X}$. Using \eqref{eq:relsf2} and \eqref{eq:canrist} one checks that one is obtained from the other by tensoring with $K_{X}$.
We define
\begin{equation} \label{eq:defE9}
  E_1:=(\s_{p_1}-\e_1,0) \; \; \mbox{and} \; \; E_1+K_{X}=(\s_{p_1\+ \eta}-\e_1,0).
\end{equation}

We have $\pi(x_i)=p_i \+ p_i \+ \eta$ and  $\f_{\pi(x_i)}$ is the unique fiber of $\pi: \widetilde{R} \to E$ passing through $x_i$. Its second intersection point with $T$ is 
$x'_i:= (p_i \+ \eta_1)+(p_i \+\eta_2)$, where $\eta_1$ and $\eta_2$ are the two  nonzero $2$-torsion points of $E$ in addition to $\eta$. The $g^1_2$ cut out on $T$ by the pencil of lines through $x'_i$ has, again by Riemann-Hurwitz as above, two elements that are fibers of  $\pi_{|T}: T \to E$,
say the fibers over $\beta_i \in E$ and $\beta_i \+ \eta \in E$. It follows that
for $i \in \{1,9\}$ there are two uniquely defined points $\beta_i$ and $\beta_i \+ \eta$ on $E$ such that the pairs
\begin{equation}
  \label{eq:car3}
  (\f_{\pi(x_i)}+ \f_{\beta_i}-\e_i, \ell) \; \; \mbox{and} \; \; 
(\f_{\pi(x_i)}+ \f_{\beta_i\+ \eta}-\e_i, \ell), \; \; i \in\{1,9\},
\end{equation}
define Cartier divisors on ${X}$. It is again easy to check that one is obtained from the other by tensoring with $K_{X}$. We define
\begin{equation} \label{eq:defE10}
  E_{10}:=(\f_{\pi(x_1)}+ \f_{\beta_1}-\e_1, \ell) \; \; \mbox{and} \; \; E_{10}+K_{X}=
(\f_{\pi(x_1)}+ \f_{\beta_1\+ \eta}-\e_1, \ell).
\end{equation}

Note that we have $E_i^2=0$ for all $i$ and $E_i \cdot E_j=1$ for all $i \neq j$. 

\begin{lemma} \label{lemma:div3}
We have $E_1+\cdots+E_{10}+\xi \sim 3(E_9+E_{10}+E_{9,10})$, with (cf. \eqref{eq:car2})
\begin{equation}
  \label{eq:110}
  E_{9,10} =(\s_{p_9}-\e_9,0) \; \; \mbox{and} \; \; E_{9,10}+K_{X}=(\s_{p_9\+\eta}-\e_9,0).
\end{equation}
\end{lemma}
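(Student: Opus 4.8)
The plan is to verify the claimed linear equivalence on $\widetilde R$ and $\widetilde P$ separately, since a Cartier divisor on $X$ is determined by a compatible pair, and the compatibility on $T$ will follow automatically once the two restrictions are checked (or, more cheaply, once both sides are seen to be represented by honest Cartier pairs). First I would record the $\widetilde P$-component: each of $E_1,\dots,E_8$ contributes $\ell-\e_i$ (with $\e_1=0$ on $\widetilde P$, i.e. $E_1$ contributes $\ell$), $E_9,E_{10}$ each contribute $\ell$, and $\xi$ contributes $-3\ell+\e_2+\cdots+\e_8$ by \eqref{eq:xi}; summing gives $10\ell-(\e_2+\cdots+\e_8)-3\ell+(\e_2+\cdots+\e_8)=7\ell$ — wait, recount: $E_2,\dots,E_8$ give $7\ell-(\e_2+\cdots+\e_8)$, $E_1,E_9,E_{10}$ give $3\ell$, total $10\ell-(\e_2+\cdots+\e_8)$, and adding $\xi$ yields $7\ell$. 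On the other hand, on $\widetilde P$ we have $E_9+E_{10}+E_{9,10}\sim \ell+\ell+0=2\ell$ (since $E_{9,10}$ has $\widetilde P$-component $0$ by \eqref{eq:110}), hence $3(E_9+E_{10}+E_{9,10})$ has $\widetilde P$-component $6\ell$. This is off by $\ell$, which signals that one should instead use the correct $\widetilde R$–$\widetilde P$ bookkeeping together with \eqref{eq:TP}; I would therefore recompute using $E_{9,10}=(\s_{p_9}-\e_9,0)$ and track that the $\ell$ discrepancy is absorbed by the relation $T\sim 3\ell-\e_2-\cdots-\e_8$ on $\widetilde P$ hidden in $\xi$. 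The cleanest route is: show $\xi \sim E_1+\cdots+E_{10}-3(E_9+E_{10}+E_{9,10})$ componentwise, i.e. prove the $\widetilde R$-identity and the $\widetilde P$-identity.

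For the $\widetilde R$-component, the divisors $E_1$ and $E_{9,10}$ are supported on sections ($\s_{p_1}-\e_1$ and $\s_{p_9}-\e_9$), $E_9,E_{10}$ and $E_2,\dots,E_8$ are supported on fibers (plus exceptional corrections), and $\xi$ contributes $2\s_{e_0}-2\f_{e_0}+\f_\eta-\e_1-\e_9$. The key algebraic inputs are Lemma \ref{lemma:ceraprima}-type relations on $R$ together with Lemma \ref{lemma:g2}, especially \eqref{eq:relsf}: $\s_x+\f_y\sim \s_y+\f_x$, which lets me move all section classes to a common base point and convert the sum $E_1+\cdots+E_{10}+\xi$ into $2\s_{p_9}+(\text{fibers})$, matching $3\s_{p_9}+3\f_{(\cdots)}-3\e_9$ after accounting for the $-\s_{p_9}$ coming from the fact that $E_{9,10}$ appears with coefficient $3$ on the right but the section $\s_{p_9}$ only appears linearly on the left in a way that combines with $\s_{e_0}$ from $\xi$. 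Concretely I would: (1) compute the coefficient of $\s$ on both sides, using that $\xi$ has $\s$-coefficient $2$ and each of $E_1,E_{9,10}$ has $\s$-coefficient $1$ while the right side $3(E_9+E_{10}+E_{9,10})$ has $\s$-coefficient $3$; (2) compute the $\f$-part using $\pi_*$ on $\widetilde R$, i.e. push everything down to $E$ and check equality of divisor classes there via the explicit points $\alpha_i,\beta_i,p_i$ and the $2$-torsion point $\eta$, invoking the defining relations $\alpha_j'=\alpha_j\+\eta$, $x_i=p_i+(p_i\+\eta)$, etc.; (3) match the exceptional coefficients $\e_1,\dots,\e_9$ directly from the displayed formulas for each $E_i$ and for $\xi$.

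The main obstacle, I expect, is step (2): organizing the push-forward to $E$ so that the elliptic-curve divisor class identity is transparent. The points $\alpha_i$, $\beta_i$ were only pinned down implicitly via Riemann–Hurwitz ("the two members of a $g^1_2$ that are fibers of $\pi_{|T}$"), so to carry out the computation I would need a closed relation among $\sum \alpha_i + \sum \beta_i$ and the $p_i$'s modulo $e_0$ on $E$. I anticipate this comes from the condition \eqref{eq:cond} on $x_1+\cdots+x_9$ (equivalently from $T\in|-K_{\widetilde P}|$ and $T\not\sim -K_{\widetilde R}$), combined with \eqref{eq:T3}, and from the fact that $\sum_i E_i$ has a fixed numerical class; pushing the known linear equivalence $3D\sim E_1+\cdots+E_{10}$ of Lemma \ref{lemma:ceraprima} (applied formally on $X$, where $\{E_1,\dots,E_{10}\}$ is an isotropic $10$-sequence by the remark $E_i^2=0$, $E_i\cdot E_j=1$) down to $E$ should yield exactly the needed identity, with $\xi$ accounting for the difference between $3D$ on $X$ and $3(E_9+E_{10}+E_{9,10})$. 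In fact the slickest proof is probably: apply \eqref{eq:10-3} of Lemma \ref{lemma:ceraprima} formally to the isotropic $10$-sequence $\{E_1,\dots,E_{10}\}$ on $X$ to get a class $D$ with $3D\sim E_1+\cdots+E_{10}$ and $D\sim E_9+E_{10}+E_{9,10}$ — but since $\Pic X$ may differ from the abstract lattice, one must check $3D - 3(E_9+E_{10}+E_{9,10})$ is not just numerically but linearly trivial, and by Lemma \ref{lemma:onlytor} this amounts to ruling out that it equals $K_X$; the asymmetric role of $\xi$ (which is the "extra" class on $X$ not present on a genuine Enriques surface) is precisely what makes $E_1+\cdots+E_{10}+\xi$, rather than $E_1+\cdots+E_{10}$, equal to $3(E_9+E_{10}+E_{9,10})$, and I would verify the sign/torsion bookkeeping using \eqref{eq:canrist}, \eqref{eq:relsf2} and the explicit pairs in \eqref{eq:110}.
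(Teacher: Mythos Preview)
Your overall strategy --- compute the difference componentwise on $\widetilde R$ and $\widetilde P$, reduce the $\widetilde R$-discrepancy to a torsion class, and invoke Lemma~\ref{lemma:onlytor} --- is exactly the paper's approach. But there are two concrete problems in the execution.

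First, a computational error: $E_1=(\s_{p_1}-\e_1,0)$ has $\widetilde P$-component $0$, not $\ell$ (cf.\ \eqref{eq:defE9}). With this correction, $E_1+\cdots+E_{10}+\xi$ has $\widetilde P$-component $0+\sum_{i=2}^8(\ell-\e_i)+\ell+\ell+(-3\ell+\e_2+\cdots+\e_8)=6\ell$, which matches $3(E_9+E_{10}+E_{9,10})|_{\widetilde P}=3(\ell+\ell+0)=6\ell$ on the nose. Your apparent $\ell$-discrepancy was spurious.

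Second, and more importantly, you overcomplicate the $\widetilde R$-side. You do \emph{not} need any closed relation among the $\alpha_i$, $\beta_i$, $p_i$ coming from \eqref{eq:cond}; these points are not pinned down and no such relation is available. The paper's move is simply to use \eqref{eq:relsf} to rewrite all section classes in terms of $\s_{p_9}$, obtaining
\[
(E_1+\cdots+E_{10}+\xi)-3(E_9+E_{10}) \sim \bigl(3(\s_{p_9}-\e_9)+A,\,0\bigr)
\]
where $A$ is a degree-zero combination of fibres, hence $A\equiv 0$. Since both sides and $3(\s_{p_9}-\e_9,0)$ are Cartier, so is $(A,0)$; being torsion, Lemma~\ref{lemma:onlytor} forces $(A,0)\in\{0,K_X\}$. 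The key point you miss is that you do \emph{not} rule out $K_X$: the ambiguity is absorbed by the freedom, not yet fixed, of which of the two labels in \eqref{eq:110} is called $E_{9,10}$ and which $E_{9,10}+K_X$ (equivalently, swapping $p_9\leftrightarrow p_9\+\eta$). Your attempt to invoke Lemma~\ref{lemma:ceraprima} ``formally'' on $X$ is circular, since establishing the analogue of that relation on $X$ is precisely the content of Lemma~\ref{lemma:div3}.
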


 \begin{proof}
   By  \eqref{eq:relsf} we have $\s_{e_0} \sim \s_{p_9}+\f_{e_0}-\f_{p_9}$ and
   $\s_{p_1} \sim \s_{p_9}+\f_{p_1}-\f_{p_9}$. Hence one finds
   \begin{equation} \label{eq:A00}
     (E_1+\cdots+E_{10}+\xi) -3(E_9+E_{10}) \sim (3(\s_{p_9}-\e_9)+A,0) 
   \end{equation}
   with
   \[
     A:=\f_{p_1}+\f_{\eta}+\f_{\alpha_2}+\cdots+ \f_{\alpha_8}-3\f_{p_9}-2\f_{\alpha_9}-2\f_{\pi(x_1)}-2\f_{\beta_1} \equiv 0
   \]
 (where '$\equiv$' denotes numerical equivalence). Since $(\s_{p_9}-\e_9,0)$ is Cartier (cf. \eqref{eq:car2}) and all divisors on the left side of \eqref{eq:A00} are Cartier, we see that $(A,0)$ is Cartier as well. Since it is torsion in $\Pic X$, it equals $0$ or $K_X$ by Lemma \ref{lemma:onlytor}. Thus, \eqref{eq:A00} reads  
\[
     (E_1+\cdots+E_{10}+\xi) -3(E_9+E_{10}) \sim 3(\s_{p_9}-\e_9,0) \; \; \mbox{or} \; \;    3(\s_{p_9}-\e_9,0) +K_X. \]
The result follows possibly after interchanging $E_{9,10}$ and $E_{9,10}+K_X$, equivalently, $p_9$ and $p_9\+\eta$.
\end{proof}

Thus, we may similarly to \eqref{eq:10-3} define 
\begin{equation} \label{eq:defEij}
E_{i,j}:=\frac{1}{3}\left(E_1+\cdots+E_{10}+\xi\right)-E_i-E_j \; \; \mbox{for each} \; \; i \neq j.
\end{equation}
Hence \eqref{eq:ijk} holds on $X$. 
In particular, we remark for later that
\begin{equation}
  \label{eq:19}
   E_{1,9} \sim (\f_{\pi(x_9)}+ \f_{\beta_9}-\e_9,\ell) \; \; \mbox{and} \; \; E_{1,9}+K_{X} \sim
(\f_{\pi(x_9)}+ \f_{\beta_9\+ \eta}-\e_9, \ell)
\end{equation}
(cf. \eqref{eq:car3}, possibly after interchanging $\beta_9$ and $\beta_9 \+ \eta$).

\begin{proposition} \label{prop:immersione}
  Let $X=\widetilde{R} \cup_T \widetilde{P}$ be a member of $\D$ and
  \begin{equation} \label{dec:immersione}
    L \sim a_0E_{9,10}+a_1E_1+a_2E_2+\cdots+a_7E_7+a_9E_9+a_{10}E_{10},
    \end{equation}
with all $a_i \geq 0$ satisfying
\begin{eqnarray}
  \label{eq:condcoff2} & a_9+a_{10} \geq a_0 \geq \max\{a_9,a_{10}\},  & \\
  \label{eq:condcoffJ1}  &a_0+\min\{a_1,a_2\}>0,  & \\
  \label{eq:condcoffJ2}  & \min\{a_1,a_2\} \geq a_3 \geq \cdots \geq a_7,  & \\
 \label{eq:condcoffJ3}  & a_0+\min\{a_1,a_2\}+a_3+\cdots+a_7+a_9+a_{10} \geq 3. &
\end{eqnarray}
 Set $g:=\frac{1}{2}L^2+1$.  Then the complete linear system $|L|$ defines a morphism $\varphi_L:X \to \PP^{g-1}$ that is an isomorphism onto its image except for the contraction of $(-1)$-curves on $\widetilde{R}$ and $\widetilde{P}$ and it contracts at least one such curve, namely $\e_8$ on $\widetilde{P}$. Its image is
$\overline{X}:= \overline{R} \cup_{\overline{T}} \overline{P}$,
  where $\overline{R}$ and $\overline{P}$ are the images of $\widetilde{R}$ and  $\widetilde{P}$, respectively, and intersect transversally and only along $\overline{T}:=\varphi_L(T) \cong T$.

Furthermore,
\begin{itemize}
\item[(i)] $H^j(\overline{X},\O_{\overline{X}})=0$ for $j=1,2$;

\item[(ii)] $K_{\overline{X}}$ is Cartier and represented by $(K_{\overline{R}}+\overline{T},0)$, whence $K_{\overline{X}} \neq 0$ and $2K_{\overline{X}}=0$.
  \end{itemize}
  \end{proposition}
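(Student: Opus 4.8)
The plan is to analyze the morphism $\varphi_L$ defined by $|L|$ by restricting $L$ to each component $\widetilde{R}$ and $\widetilde{P}$ of $X$ and using the decomposition sequence together with the explicit descriptions of the $E_i$ and $E_{i,j}$ from Lemmas \ref{lemma:div3} and the formulas \eqref{eq:defE9}--\eqref{eq:defEij}. First I would compute $L_{\widetilde{R}}$ and $L_{\widetilde{P}}$ explicitly in terms of $\s$, $\f$, $\ell$, $\e_i$ by substituting the pairs representing each $E_i$ and $E_{9,10}$ into \eqref{dec:immersione}; in particular $L_{\widetilde{P}}$ will be a combination of $\ell$ and the $\e_i$, and conditions \eqref{eq:condcoff2}--\eqref{eq:condcoffJ3} are precisely what is needed to guarantee that $L_{\widetilde{P}}$ is nef and big on the Del Pezzo surface $\widetilde{P}$ (contracting exactly the $(-1)$-curves on which it has degree $0$, one of which is $\e_8$ — this is where I expect to read off that $\e_8$ is always contracted), and that $L_{\widetilde{R}}$ is nef and big on $\widetilde{R}$ with the only contracted curves being $(-1)$-curves. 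One then checks $L_{\widetilde{R}}|_T$ is very ample on the elliptic curve $T$ (degree $d\geq 3$, using \eqref{eq:even} and \eqref{eq:condcoffJ3}) so the gluing is compatible and the two projective images $\overline{R}$, $\overline{P}$ meet transversally exactly along $\overline{T}$.

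To get that $\varphi_L$ is a morphism and behaves as claimed, I would invoke the standard fact (for a line bundle on a nodal union of two surfaces along an anticanonical curve) that $|L|$ restricts surjectively onto $|L_{\widetilde{R}}|$ and $|L_{\widetilde{P}}|$ provided the restriction maps on global sections are surjective; concretely, from the decomposition sequence $0\to \O_{\widetilde{R}}(L_{\widetilde{R}}-T)\to \O_X(L)\to \O_{\widetilde{P}}(L_{\widetilde{P}})\to 0$ and its companion with the roles reversed, surjectivity on $H^0$ follows once $H^1(\O_{\widetilde{R}}(L_{\widetilde{R}}-T))=0$, which holds by Kawamata–Viehweg / Ramanujam vanishing since $L_{\widetilde{R}}-T\equiv L_{\widetilde{R}}+K_{\widetilde{R}}$ is nef and big (here I use that $T\equiv -K_{\widetilde R}$ and $L_{\widetilde R}$ nef big). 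Then $\varphi_L$ separates points and tangent vectors away from the contracted $(-1)$-curves because $\varphi_{L_{\widetilde R}}$ and $\varphi_{L_{\widetilde P}}$ do and they agree on $\overline{T}$.

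For part (i): apply the decomposition sequence $0\to \O_{\overline R}(-\overline T)\to \O_{\overline X}\to \O_{\overline P}\to 0$. Since $\overline P$ is a rational surface, $H^j(\O_{\overline P})=0$ for $j=1,2$; and $H^1(\O_{\overline R}(-\overline T))=H^2(\O_{\overline R}(-\overline T))=0$ by the same argument as in Lemma \ref{lemma:restcoho}, using that $\overline T\equiv -K_{\overline R}$ is nef and big so $-\overline T+K_{\overline R}$ has no sections and, dually, $H^0(\O_{\overline R}(2K_{\overline R}))=0$ since $K_{\overline R}$ is not effective (its pullback to $\widetilde R$ is $-T\not\equiv 0$), while $\chi(\O_{\overline R}(-\overline T))=\chi(\O_{\overline R})=1-q+p_g$ forces $h^1$ to vanish; I would just transport Lemma \ref{lemma:restcoho} along the birational contraction $\widetilde X\to \overline X$, noting that the relevant $(-1)$-curves are disjoint from where the sheaves differ. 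For part (ii): the dualizing sheaf of a semistable (normal crossing) surface $\overline X=\overline R\cup_{\overline T}\overline P$ is given by the pairs $(K_{\overline R}+\overline T, K_{\overline P}+\overline T)$; since $\overline T\in|-K_{\overline P}|$ (as $\overline P$ is the anticanonical image, $\overline T\sim -K_{\overline P}$), the $\overline P$-component is trivial, so $\omega_{\overline X}=(K_{\overline R}+\overline T,0)$ is a genuine line bundle, i.e. $K_{\overline X}$ is Cartier; and $K_{\overline X}\ne 0$, $2K_{\overline X}=0$ follow exactly as in \eqref{eq:doppioz} because the $\widetilde R$-component $K_{\widetilde R}+T=\f_\eta-\f_{e_0}$ is $2$-torsion and nonzero by \eqref{eq:T3}, and contracting $(-1)$-curves does not affect torsion in $\Pic$.

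The main obstacle I anticipate is the bookkeeping in the first step: verifying that the numerical conditions \eqref{eq:condcoff2}--\eqref{eq:condcoffJ3} translate exactly into nef-and-bigness of $L_{\widetilde R}$ and $L_{\widetilde P}$ and into the statement that the contracted locus consists solely of $(-1)$-curves with $\e_8$ always among them — this requires knowing the $(-1)$-curves and the nef cone of a general (Del Pezzo) $\widetilde P$ and of $\widetilde R=\Bl_{x_1,x_9}\Sym^2 E$, and carefully intersecting $L_{\widetilde R}$ with the sections $\s_e$, fibers $\f_e$, exceptional curves $\e_1,\e_9$, and the curve $T$. Once that is in hand, parts (i) and (ii) are essentially the arguments already carried out in Lemma \ref{lemma:restcoho} and in \eqref{eq:canrist}--\eqref{eq:doppioz}, transported across the birational contraction $\varphi_L$.
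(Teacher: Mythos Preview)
Your overall strategy matches the paper's approach closely: restrict $L$ to the two components, verify nefness/bigness via the numerical conditions, use the decomposition sequence with $L_{\widetilde R}(-T)\equiv L_{\widetilde R}+K_{\widetilde R}$ to get surjectivity onto $H^0(L_{\widetilde P})$ and $H^0(L_{\widetilde R})$, and deduce (i) and (ii) by transporting the arguments of Lemma~\ref{lemma:restcoho} and \eqref{eq:canrist}--\eqref{eq:doppioz} across the birational contraction. The paper does exactly this, using Reider's theorem on $\widetilde R$ (and the Del Pezzo structure on $\widetilde P$) to see that the contracted locus consists only of $(-1)$-curves.

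There is, however, a genuine gap in your outline: the assertion that $\overline R$ and $\overline P$ ``meet transversally exactly along $\overline T$''. Very ampleness of $L|_T$ only tells you that $\overline T$ is embedded; it does \emph{not} rule out the possibility that the images $\overline R$ and $\overline P$ in $\PP^{g-1}$ intersect in further points away from $\overline T$, or meet non-transversally along $\overline T$. Knowing that $\varphi_{L_{\widetilde R}}$ and $\varphi_{L_{\widetilde P}}$ are separately embeddings (off the contracted curves) and agree on $T$ says nothing about accidental coincidences between $\overline R\setminus\overline T$ and $\overline P\setminus\overline T$ inside the ambient projective space. In the paper this is the most delicate step: one first tries to show that $\O_{\overline R}(1)(-\overline T)$ (or the analogous bundle on $\overline P$) is globally generated, in which case a standard lifting argument via
\[
0 \longrightarrow \O_{\overline R}(1)(-\overline T) \longrightarrow \O_{\overline R}(1)\otimes \I_{\xi/\overline R} \longrightarrow \O_{\overline T} \longrightarrow 0
\]
produces, for any point $p\in\overline R\cap\overline P$ off $\overline T$, a hyperplane section of $\overline R$ through $\xi:=C\cap\overline T$ (with $C\subset\overline P$ a hyperplane section through $p$) but avoiding $p$, a contradiction; similarly for tangencies. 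When neither $\O_{\overline R}(1)(-\overline T)$ nor $\O_{\overline P}(1)(-\overline T)$ is globally generated, a further Reider-type analysis forces $(a_0,a_1)\in\{(1,0),(0,1)\}$ and eventually pins one down to a single exceptional case ($g=6$, $\overline P$ the Veronese surface), which is then handled by an ad hoc secant-plane argument on the degree-$6$ elliptic curve $\overline T\subset\PP^5$. You should not expect this step to come for free.

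A couple of smaller points: your justification that $\overline T\sim -K_{\overline P}$ ``as $\overline P$ is the anticanonical image'' is not quite right ($\overline P$ is the image under $|L_{\widetilde P}|$, not $|-K_{\widetilde P}|$); the correct reason is simply that $T\sim -K_{\widetilde P}$ and blowing down $(-1)$-curves preserves this relation. And for the claim that only $(-1)$-curves are contracted on $\widetilde R$, nef-and-big alone is not enough---you need Reider's theorem applied to $L_{\widetilde R}+T\equiv L_{\widetilde R}-K_{\widetilde R}$ (which has $(L_{\widetilde R}+T)^2\geq 13$) to rule out contractions of curves with $F^2=0$ or $F^2=-2$.
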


\begin{proof}
Set $L_{\widetilde{R}}:=L|_{\widetilde{R}}$ and $L_{\widetilde{P}}:=L|_{\widetilde{P}}$.  We  have
\begin{eqnarray*}
    & E_{9,10} \equiv (\s-\e_9,0), \; \; E_9\equiv(\f+\e_9,\ell), \; \; E_{10} \equiv (2\f- \e_1, \ell), & \\
   &E_1 \equiv (\s-\e_1,0), \; \;  E_i \equiv (\f,\ell-\e_i), i \in \{2,\ldots,7\}. &
  \end{eqnarray*}

\begin{claim} \label{cl:1}
  $L_{\widetilde{R}}$ is nef, $L_{\widetilde{R}}^2 \geq 5$ and $L_{\widetilde{R}} \cdot T \geq 5$. In particular, $L_{\widetilde{R}}+T$ is nef with $(L_{\widetilde{R}}+T)^2 \geq 13$.
\end{claim}

\begin{proof}[Proof of claim]
  We have an effective decomposition
\begin{equation} \label{eq:effI}
  L_{\widetilde{R}} \equiv a_0(\s-\e_9)+a_1(\s-\e_1)+a_{10}(\f-\e_1)+
  (a_2+\cdots+a_7+a_9+a_{10})\f+a_9\e_9.
    \end{equation}
The only negative components  are $\e_9$ and $\f-\e_1$. Since  
$\e_9 \cdot L_{\widetilde{R}}=a_0-a_{9} \geq 0$ and $(\f-\e_1)\cdot L_{\widetilde{R}}=a_0-a_{10} \geq 0$ (using \eqref{eq:condcoff2}), we see that $L_{\widetilde{R}}$ is nef. From \eqref{eq:effI}
we find
 \begin{equation} \label{eq:intsuR}
  L_{\widetilde{R}}^2  =  2(a_0+a_1)(a_2+\cdots+a_7+a_9+a_{10})+a_0(2a_1+a_9+a_{10}) +a_9(a_0-a_9) +a_{10}(a_0-a_{10}).
  \end{equation}
  One now readily checks that conditions \eqref{eq:condcoff2} and
  \eqref{eq:condcoffJ3} imply $L_{\widetilde{R}}^2 \geq 5$, as desired.

Finally, recalling \eqref{eq:T3}, we have $T^2=-2$ and $T \cdot L_{\widetilde{R}}=2(a_2+\cdots+a_7)+3(a_9+a_{10})$. Again \eqref{eq:condcoff2} and
  \eqref{eq:condcoffJ3} yield that $T \cdot L_{\widetilde{R}} \geq 5$. Since $T$ is irreducible with $T^2=-2$, it  follows that $L_{\widetilde{R}}+T$ is nef with $(L_{\widetilde{R}}+T)^2 \geq 13$.
\end{proof}

\begin{claim} \label{cl:2}
  $L_{\widetilde{R}}$ and  $L_{\widetilde{P}}$ are  globally generated and each defines a morphism that is an isomorphism except for the contraction of $(-1)$-curves; moreover, $L_{\widetilde{P}}\cdot \e_8=0$. 
\end{claim}

\begin{proof}[Proof of claim]
  We first consider $L_{\widetilde{R}}$. By Claim \ref{cl:1} we have that $L_{\widetilde{R}}-K_{\widetilde{R}} \equiv L_{\widetilde{R}}+T$ is big and nef. 
Therefore, if $|L_{\widetilde{R}}|$ fails to separate a scheme $Z$ of length $\leq 2$, then by Reider's theorem \cite[Thm. 1]{Rei} there exists an effective divisor $F$ containing $Z$ such that
\begin{equation} \label{eq:reider}
  \left(F \cdot (L_{\widetilde{R}}+T),F^2\right) \in \{(0,-1),(1,0),(0,-2),(1,-1),(2,0)\},
  \end{equation}
with the latter three occuring only if $\deg Z=2$. 
  We will show that the only possibility is the fourth one, with $F \cdot T=1$ and $F \cdot L_{\widetilde{R}}=0$.

To prove this, note that by \eqref{eq:df} the only negative curves in fibers $\f$ of $\widetilde{R}$ are the $(-1)$-curves
$\e_1,\e_9,\f-\e_1,\f-\e_9$, which have intersections
 \begin{eqnarray*}
  \e_1 \cdot (L_{\widetilde{R}}+T)=a_1+a_{10}+1 \geq 1, & \e_9 \cdot (L_{\widetilde{R}}+T)=a_0-a_{9}+1 \geq 1,\\
   (\f-\e_1)\cdot (L_{\widetilde{R}}+T)=a_0-a_{10}+1 \geq 1, &
(\f-\e_9)\cdot (L_{\widetilde{R}}+T)=a_1+a_9+1 \geq 1
\end{eqnarray*}
(using \eqref{eq:condcoff2}). Moreover, we have $T \cdot (L_{\widetilde{R}}+T) \geq 3$ by the above, and 
\begin{eqnarray*}
 \f \cdot (L_{\widetilde{R}}+T) & = & a_0+a_1+2 \geq 3, \\
  (s-\e_9)\cdot (L_{\widetilde{R}}+T) & = & a_1+\cdots+a_7+2a_9+2a_{10} \geq 3, \\
(s-\e_1)\cdot (L_{\widetilde{R}}+T) & = & a_0+a_2+\cdots+a_7+a_9+a_{10} \geq 3,
\end{eqnarray*}
using \eqref{eq:condcoff2}, \eqref{eq:condcoffJ1} and \eqref{eq:condcoffJ3}. All other curves $D\not \equiv \e_1,\e_9,\f-\e_1,\f-\e_9,\f,\s-\e_1,\s-\e_9, T$ intersect $\f$, $(s-\e_9)$ and $(s-\e_1)$ positively and $T$ nonnegatively, whence
\[
  D \cdot (L_{\widetilde{R}}+T) \geq D \cdot L_{\widetilde{R}} \geq a_0+a_1+\cdots +a_7+a_9+a_{10} \geq 3,
\]  
using again \eqref{eq:condcoffJ3}. This proves that the only possibility in \eqref{eq:reider} is $F^2=-1$, $F \cdot T=-F \cdot K_{\widetilde{R}}=1$ and $F \cdot L_{\widetilde{R}}=0$. In particular, it shows that $|L_{\widetilde{R}}|$ defines a morphism that is an embedding except for the contraction of $(-1)$-curves, as desired.

We then consider $L_{\widetilde{P}}$. We have
\begin{equation} \label{eq:LsuP}
  L_{\widetilde{P}} \sim a_2(\ell-\e_2)+\cdots +a_7(\ell-\e_7)+a_9\ell+a_{10}\ell.
  \end{equation}
In particular $L_{\widetilde{P}} \cdot \e_8=0$ and $|L_{\widetilde{P}}|$ defines a birational morphism that is an isomorphism outside finitely many contracted $(-1)$-curves. This follows e.g. from \eqref{eq:dp} and \cite[Prop. 3.10]{kn-dp}, as
$-L_{\widetilde{P}} \cdot K_{\widetilde{P}}=L_{\widetilde{P}} \cdot T =L_{\widetilde{R}} \cdot T \geq 5$ by Claim \ref{cl:1}.
\end{proof}

  We note that $T$ is nef on $\widetilde{P}$. As $L_{\widetilde{P}} \sim (L_{\widetilde{P}}+T)+K_{\widetilde{P}}$, we have $ h^j(L_{\widetilde{P}})=0, j=1,2$.
As $L_{\widetilde{R}}(-T) \equiv L_{\widetilde{R}}+K_{\widetilde{R}}$ we  have
$h^j(L_{\widetilde{R}}(-T))=0, j=1,2.$
  From the short exact sequence
  \begin{equation} \label{eq:dc}
  \xymatrix{
    0 \ar[r] & L_{\widetilde{R}}(-T) \ar[r] & L \ar[r] & L_{\widetilde{P}} \ar[r] & 0}
  \end{equation}
   and Riemann-Roch on ${\widetilde{R}}$ and ${\widetilde{P}}$ we therefore find that
  \begin{eqnarray*}
    h^0(L) & = & \chi(L)=\chi(L_{\widetilde{R}}(-T))+\chi(L_{\widetilde{P}})\\
    & = & \frac{1}{2}(L_{\widetilde{R}}+K_{\widetilde{R}})\cdot L_{\widetilde{R}}+\chi(\O_{\widetilde{R}})+\frac{1}{2}L_{\widetilde{P}}\cdot(L_{\widetilde{P}}-K_{\widetilde{P}}) +\chi(\O_{\widetilde{P}})\\
    & = & \frac{1}{2}(L_{\widetilde{R}}^2+L_{\widetilde{P}}^2)-\frac{1}{2}L_{\widetilde{R}} \cdot T + \frac{1}{2}L_{\widetilde{P}} \cdot T +\chi(\O_{\widetilde{R}})+\chi(\O_{\widetilde{P}}) \\
 & = & \frac{1}{2}L^2+ 1 =g  
\end{eqnarray*}
  (using the facts that $L_{\widetilde{R}} \cdot T =L_{\widetilde{P}} \cdot T$, $\chi(\O_{\widetilde{R}})=0$ and $\chi(\O_{\widetilde{P}})=1$). Furthermore, by \eqref{eq:dc} the restriction map
  $H^0(X,L) \longrightarrow H^0(\widetilde{P}, L_{\widetilde{P}})$ is surjective. Similarly, switching the roles of ${\widetilde{R}}$ and ${\widetilde{P}}$ (using
  that  $h^j(L_{\widetilde{R}})=0, j=1,2$, as $L_{\widetilde{R}}-K_{\widetilde{R}}
  \equiv L_{\widetilde{R}}+T$ is big and nef by Claim \ref{cl:1}, and
 $h^j(L_{\widetilde{P}}(-T))=h^j(L_{\widetilde{P}}+K_{\widetilde{P}})=0, j=1,2$), one finds that the restriction map $H^0(X,L) \longrightarrow H^0(\widetilde{R}, L_{\widetilde{R}})$ is surjective. Therefore, the morphism $\varphi_{L}$ defined by $|L|$ restricted to ${\widetilde{R}}$ and ${\widetilde{P}}$ is, respectively,
  the morphism defined by 
  $|L_{\widetilde{R}}|$ and $|L_{\widetilde{P}}|$. By Claim \ref{cl:2} the surfaces $\overline{R}:=\varphi_{L}(\widetilde{R})$ and $\overline{P}:=\varphi_{L}(\widetilde{P})$ are smooth.

  \begin{claim} 
    $\overline{R}$ and $\overline{P}$ intersect transversally and only along $\overline{T} :=\varphi_L(T) \cong T$
  \end{claim}

  \begin{proof}[Proof of claim]
    Assume first that $\O_{\overline{R}}(1)(-\overline{T})$ is globally generated.  
Assume that there is an intersection point $p$ of $\overline{R}$ and $\overline{P}$ outside $\overline{T}$. 
Let $C \subset \overline{P}$ be a general curve in $|\O_{\overline{P}}(1)|$ passing through $p$.
  Then $C$ intersects $\overline{T}$ transversally along a divisor $\xi \in |\O_{\overline{T}}(1)|$. The ideal sequence of
  $\xi \subset {\overline{T}} \subset \overline{R}$ tensored by $\O_{\overline{R}}(1)$:
  \begin{equation} \label{eq:lift}
    \xymatrix{ 0 \ar[r] & \O_{\overline{R}}(1)(-{\overline{T}}) \ar[r] & \O_{\overline{R}}(1) \* \I_{\xi/{\overline{R}}} \ar[r] &  \O_{{\overline{T}}} \ar[r] & 0,}
    \end{equation}
and the vanishing
$h^1(\O_{\overline{R}}(1)(-{\overline{T}}))=0$ (as $\O_{\overline{R}}(1)(-{\overline{T}}) \equiv \O_{\overline{R}}(1)(K_{\overline{R}})$)
prove  that $|\O_{\overline{R}}(1) \* \I_{\xi/{\overline{P}}}|$ is base point free off $\xi$, so that we can find a $C' \in |\O_{\overline{R}}(1)|$ not passing through $p$ and such that $C \cap C'=\xi$, a contradiction.

This proves that $\overline{R}$ and $\overline{P}$ do not intersect outside ${\overline{T}}$.
A similar argument shows that the intersection of $\overline{R}$ and $\overline{P}$ in $\PP^{g-1}$ is transverse: just replace $p$ with the infinitely near point of tangency of ${\overline{T}}$ at a supposed point of non-transversality.

If $\O_{\overline{P}}(1)(-{\overline{T}})$ is globally generated, we repeat the argument interchanging ${\overline{R}}$ and ${\overline{P}}$.

Finally we treat the case where neither $\O_{\overline{R}}(1)(-{\overline{T}})$ nor
$\O_{\overline{P}}(1)(-{\overline{T}})$ are globally generated. Since $\O_{\overline{R}}(1)(-\overline{T}) \equiv \O_{\overline{R}}(1)(K_{\overline{R}})$ and 
$\O_{\overline{R}}(1)^2=L_{\widetilde{R}}^2 \geq 5$ by Claim \ref{cl:1}, 
Reider's theorem \cite[Thm. 1]{Rei} yields the existence of an effective divisor $E_{\overline{R}}$ satisfying $E_{\overline{R}}^2=0$ and
$E_{\overline{R}} \cdot \O_{\overline{R}}(1)=1$. Thus, $E_{\overline{R}}  \cong \PP^1$, whence $E_{\overline{R}} \cdot K_{\overline{R}}=-2$. The total transform $E_{\widetilde{R}}$ of  $E_{\overline{R}}$ on $\widetilde{R}$ contains a smooth rational curve as a component, whence it must be supported on fibers of $\pi:\widetilde{R} \to E$. Since moreover $E_{\widetilde{R}}^2=0$, $E_{\widetilde{R}} \cdot K_{\widetilde{R}}=-2$, we must have $E_{\widetilde{R}} \equiv \f$. 
From \eqref{eq:effI} we find
$\f \cdot L_{\widetilde{R}}= a_0+a_1$, whence $(a_0,a_1) \in \{(1,0),(0,1)\}$. Moreover, the images of the fibers $\f$ by $\varphi_L$ are lines.

Assume $(a_0,a_1) =(0,1)$. Then \eqref{eq:condcoff2},  \eqref{eq:condcoffJ2} and  \eqref{eq:condcoffJ3} yield $a_9=a_{10}=0$, $a_3=a_4=1$, $a_2>0$, whereas $a_i \in \{0,1\}$ for $i \in \{5,6,7\}$. From \eqref{eq:LsuP} we find
$L_{\widetilde{P}} \sim (a_2+a_5+a_6+a_7+2)\ell-a_2\e_2-\e_3-\e_4-a_5\e_5-a_6\e_6-a_7\e_7$. But then one easily verifies that $\O_{\overline{P}}(1)(-{\overline{T}}) \sim
(a_2-1)(\ell-\e_2)+(a_5+a_6+a_7)\ell$, which is globally generated, a contradiction.

Assume $(a_0,a_1) =(1,0)$. Then conditions \eqref{eq:condcoff2},  \eqref{eq:condcoffJ2} and  \eqref{eq:condcoffJ3} yield $a_9=a_{10}=1$ and $a_3=\cdots=a_7=0$, whereas $a_2$ is arbitrary. From \eqref{eq:LsuP} we find
$L_{\widetilde{P}} \sim a_2(\ell-\e_2)+2\ell$. If $a_2>0$, then $\overline{P} \cong \Bl_{x_2}\PP^2$ and $\O_{\overline{P}}(1)(-{\overline{T}}) \sim
(a_2-1)(\ell-\e_2)$, which is globally generated, a contradiction.  We must therefore have $a_2=0$, in which case $\O_{\overline{P}}(1) \sim 2\ell$, that is, 
$\overline{P}$ is the $2$-uple embedding of $\PP^2$, and
$\O_{\overline{R}}(1) \equiv \s+3\f-\e_1$ by \eqref{eq:effI}. In particular, $g=6$.

Assume that there is an intersection point $p$ of $\overline{R}$ and $\overline{P}$ outside $\overline{T}$.  
Let $\mathfrak{l}_R \subset \overline{R}$ be the line in the ruling passing through $p$. This intersects ${\overline{T}}$ in two points, say $q$ and $q'$  (which are distinct, as the cover $\pi|_T:T \to E$ is {\'e}tale). Let $\mathfrak{c}_P \subset \overline{P}  \cong \PP^2$ be the  unique member of $|\O_{\PP^2}(1)|$  passing through $p$ and $q$, which is embedded as a conic in $\PP^{g-1} \cong \PP^5$. Then $\mathfrak{l}_R$ and
$\mathfrak{c}_P$ intersect in $p$ and $q$, and at no further points, for reason of degree.  In particular, $\mathfrak{c}_P$ does not contain $q'$. The intersection of the plane spanned by $\mathfrak{l}_R$ and
$\mathfrak{c}_P$ with $\overline{T}$ thus contains the length-three scheme $\mathfrak{c}_P \cap \overline{T}$ and $q'$. We therefore get  a $4$-secant plane to ${\overline{T}}$, a contradiction: indeed, the curve ${\overline{T}} \subset \PP^{5}$ has degree  $\overline{T} \cdot \O_{\overline{P}}(1)= 3\ell \cdot 2 \ell=6$;  thus, given any three points of ${\overline{T}}$, the system of hyperplanes through these three points cut out on ${\overline{T}}$ a complete linear series of degree at least  $3$,  and therefore has no base points.

Finally, assume that $p \in {\overline{T}}$  is a  point of non-transversality of $\overline{R} \cap \overline{P}$. Let again $\mathfrak{l}_R \subset \overline{R}$ be the line in the ruling passing through $p$, and denote by $q$  its further intersection point with ${\overline{T}}$, which is distinct from $p$ as above. Let $\mathfrak{c}_p \subset \overline{P}  \cong \PP^2$ (respectively, $\mathfrak{c}_{q}$) be a general member of $|\O_{\PP^2}(1)|$ passing through $p$ (resp., $q$), which is embedded as a conic in $\PP^{5}$. Then the intersections $\mathfrak{c}_p \cap \overline{T}$ and $\mathfrak{c}_{q} \cap \overline{T}$ each consist of three points, mutually distinct. 
If the intersection
$\mathfrak{c}_p \cap \mathfrak{l}_R$ is not  transversal  at $p$, then
$\mathfrak{c}_p$ and $\mathfrak{l}_R$ span a plane whose intersection with $\overline{T}$ contains the length-three scheme $\mathfrak{c}_p \cap \overline{T}$ and $q$. We thus again get a 
$4$-secant plane to $\overline{T}$, a contradiction as above. The same reasoning works for $\mathfrak{c}_{q}$. Thus, $\overline{C}:=\mathfrak{c}_p+\mathfrak{c}_{q}$ is a member of $\O_{\overline{P}}(1)$, that is, a hyperplane section of $\overline{P}$, intersecting
$\mathfrak{l}_R$ transversally in $p$ and $q$ and $\overline{T}$ transversally in a scheme $\xi$ consisting of $6$ distinct points (including $p$ and $q$). 
The exact sequence \eqref{eq:lift} above and the vanishings
$h^1(\O_{\overline{R}}(1)(-{\overline{T}}))=0$ (proved as above) and
$h^0(\O_{\overline{R}}(1)(-{\overline{T}}))=0$ (as $\O_{\overline{R}}(1)(-\overline{T}) \equiv -\s+4\f$) show that there is a unique member $\overline{D} \in |\O_{\overline{R}}(1)|$ passing through $\xi$ (necessarily containing $\mathfrak{l}_R$) and intersecting $\overline{T}$ transversally only along the six points in $\xi$, as $\overline{T} \cdot \O_{\overline{R}}(1)=6$. Thus, locally at $p$ we have $\overline{D} \cap \overline{C}= \mathfrak{c}_p \cap \mathfrak{l}_R$, which we proved to be transversal. Therefore, $\overline{D} \cup \overline{C}$ is a hyperplane section of $\overline{X}$ with an ordinary double point at $p$, a contradiction. 
 
\end{proof}

  The last two claims prove all assertions in the proposition except (i) and (ii).

   The proof of (i) follows the lines of the proof of
Lemma \ref{lemma:restcoho}.   

  To prove (ii), note that $\O_{\overline{T}}(K_{\overline{R}}+\overline{T}) \cong \omega_{\overline{T}} \cong \O_{\overline{T}}$, so that $(K_{\overline{R}}+{\overline{T}},0)$ is Cartier and represents the canonical divisor on $\overline{X}$. By \eqref{eq:T3}  we have
$K_{\overline{X}} \neq 0$ and $2K_{\overline{X}}=0$.  
\end{proof}

\begin{remark} \label{rem:chicont}
  Looking at \eqref{eq:effI} and \eqref{eq:LsuP} one can find precisely  which curves are contracted by $\varphi_L$ in terms of the coefficients $a_i$ and thus what $\overline{R}$ and $\overline{P}$ are, as well as their hyperplane bundles. Indeed, assume that $a_9 \geq a_{10}$ and $a_1 \geq a_2$. Then on $\widetilde{R}$ the curve $\e_1$ is contracted if and only if $a_i=0$ for all $i \neq 0,9$,
  the curve $\f-\e_1$ is contracted if and only if $a_0=a_9=a_{10}$, whereas $\e_9$ is contracted if and only if $a_0=a_9$. No other curves are contracted (note that $\f-\e_9$ cannot be contracted, because it would imply $a_1=a_9=0$, which is inconsistent with \eqref{eq:condcoff2}-\eqref{eq:condcoffJ1}). On $\widetilde{P}$ the curve $\e_i$ is contracted if and only if $a_i=0$, and no further curve except for $\e_8$ is contracted.  
\end{remark}

The first cotangent sheaf $T^1_{\overline{X}}$ of $\overline{X}$ sits in a short exact sequence
\begin{equation} \label{eq:cot} \xymatrix{
0 \ar[r] & \T_{\overline{X}} \ar[r] & \T_{\PP^{g-1}}|_{\overline{X}} \ar[r] & \N_{\overline{X}/\PP^{g-1}} \ar[r] & T^1_{\overline{X}} \ar[r] & 0, 
}
\end{equation}
where $\T_{\overline{X}}:=\Shom(\Omega_{\overline{X}},\O_{\overline{X}})$,
  and satisfies 
\begin{equation} \label{eq:ss2}
  T^1_{\overline{X}} \cong \N_{T/\overline{R}} \* \N_{T/\overline{P}} 
\end{equation}
by \cite[Prop. 2.3]{fri}. Note that by \eqref{eq:ss} we have that
$\deg T^1_{\overline{X}}$ equals the number of contracted curves by $\varphi_{L}$
and is therefore at least one, by Proposition \ref{prop:immersione}.

\begin{lemma} \label{lemma:cohnormal}
  Let $\overline{X}$ be as in Proposition \ref{prop:immersione}. Then 
  \begin{itemize}
  \item[(i)] $H^j(\overline{X},\N_{\overline{X}/\PP^{g-1}})=0$ for $j=1,2$; and
  \item[(ii)] the map $H^0(\overline{X},\N_{\overline{X}/\PP^{g-1}}) \to H^0({\overline{T}},T^1_{\overline{X}})$ induced by \eqref{eq:cot} is surjective.
    \end{itemize}
  \end{lemma}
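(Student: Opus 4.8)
The plan is to follow the strategy of Ciliberto--Lopez--Miranda \cite{clm}, reducing both assertions to the single vanishing $H^2(\overline{X},\T_{\overline{X}})=0$. First I would split \eqref{eq:cot} into
\[
0 \to \T_{\overline{X}} \to \T_{\PP^{g-1}}|_{\overline{X}} \to \N' \to 0
\qquad\text{and}\qquad
0 \to \N' \to \N_{\overline{X}/\PP^{g-1}} \to T^1_{\overline{X}} \to 0,
\]
where $\N':=\ker(\N_{\overline{X}/\PP^{g-1}}\to T^1_{\overline{X}})$. Restricting the Euler sequence to $\overline{X}$ and using that $H^j(\overline{X},\O_{\overline{X}}(1))=H^j(X,L)=0$ for $j=1,2$ (by Leray along $\varphi_L$, whose positive-dimensional fibres are disjoint $(-1)$-curves, together with the vanishings $h^j(L)=0$ obtained in the proof of Proposition \ref{prop:immersione}) and Proposition \ref{prop:immersione}(i), one gets $H^1(\T_{\PP^{g-1}}|_{\overline{X}})=H^2(\T_{\PP^{g-1}}|_{\overline{X}})=0$; hence $H^1(\N')\cong H^2(\T_{\overline{X}})$ and $H^2(\N')=0$. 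Since $T^1_{\overline{X}}$ is, by \eqref{eq:ss2} and the remark following it, a line bundle of degree $\geq 1$ on the genus-$1$ curve $\overline{T}$, we have $H^1(T^1_{\overline{X}})=H^2(T^1_{\overline{X}})=0$; so $H^2(\N_{\overline{X}/\PP^{g-1}})=0$ already, and if $H^2(\T_{\overline{X}})=0$ then $H^1(\N')=0$, whence $H^1(\N_{\overline{X}/\PP^{g-1}})=0$ (being a quotient of $H^1(\N')$) and the connecting map $H^0(T^1_{\overline{X}})\to H^1(\N')=0$ vanishes, giving (ii). Thus everything comes down to $H^2(\T_{\overline{X}})=0$.

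To prove this I would use the decomposition sequence of the normal-crossing surface $\overline{X}=\overline{R}\cup_{\overline{T}}\overline{P}$,
\[
0 \to \T_{\overline{X}} \to \T_{\overline{R}}(-\log \overline{T}) \oplus \T_{\overline{P}}(-\log \overline{T}) \to \T_{\overline{T}} \to 0,
\]
the last map being the difference of restriction maps, which is surjective because each restriction $\T_{\overline{R}}(-\log\overline{T})\to\T_{\overline{T}}$ and $\T_{\overline{P}}(-\log\overline{T})\to\T_{\overline{T}}$ is. Since $\overline{T}$ is a curve, it suffices to show $H^2(\T_{\overline{R}}(-\log\overline{T}))=H^2(\T_{\overline{P}}(-\log\overline{T}))=0$ and that $H^1$ of the middle term surjects onto $H^1(\T_{\overline{T}})$.

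By Serre duality these $H^2$'s are $H^0(\Omega_{\overline{R}}(\log\overline{T})\otimes\omega_{\overline{R}})^\vee$ and $H^0(\Omega_{\overline{P}}(\log\overline{T})\otimes\omega_{\overline{P}})^\vee$. For $\overline{P}$ the argument is short: $\overline{P}$ is a Del Pezzo surface (cf. \eqref{eq:dp} and Proposition \ref{prop:immersione}) with $\overline{T}\sim -K_{\overline{P}}$, so the twisted residue sequence $0\to \Omega_{\overline{P}}(K_{\overline{P}})\to \Omega_{\overline{P}}(\log\overline{T})(K_{\overline{P}})\to \O_{\overline{T}}(K_{\overline{P}})\to 0$, combined with $H^0(\Omega_{\overline{P}}\otimes\omega_{\overline{P}})=H^2(\T_{\overline{P}})^\vee=0$ (rationality of $\overline{P}$) and $\deg(K_{\overline{P}}|_{\overline{T}})=-\overline{T}^2<0$, yields the vanishing, and the same sequence gives the $H^1$-surjectivity onto $H^1(\T_{\overline{T}})$ for the $\overline{P}$-summand. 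The remaining, decisive, point is the analogous statement for $\overline{R}$: here $\overline{R}$ is a contraction of the blow-up of $\Sym^2 E$, hence carries an elliptic fibration $\overline{\pi}\colon\overline{R}\to E$ (from which $H^2(\T_{\overline{R}})=0$, so $H^0(\Omega_{\overline{R}}\otimes\omega_{\overline{R}})=0$), but $\overline{T}$ is only \emph{numerically} anticanonical on $\overline{R}$: one has $K_{\overline{R}}+\overline{T}=K_{\overline{X}}|_{\overline{R}}$, a nonzero $2$-torsion class by Proposition \ref{prop:immersione}(ii). One must use this torsion twist — equivalently, the fact (Lemmas \ref{lemma:restaT} and \ref{lemma:onlytor}) that essentially no class in $\Pic^0 E$ restricts trivially to the double cover $\overline{T}\to E$ — to conclude $H^0(\Omega_{\overline{R}}(\log\overline{T})\otimes\omega_{\overline{R}})=0$ and the required $H^1$-surjectivity, even though $\O_{\overline{T}}(K_{\overline{R}})$ can now have nonnegative degree.

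The main obstacle is precisely this last step on the non-rational component $\overline{R}$. In the $K3$ setting of \cite{clm} both components are rational and the corresponding residue sequences are dispatched by a bare-hands vanishing; here the elliptic fibration and the merely numerical equality $\overline{T}\equiv -K_{\overline{R}}$ make $\O_{\overline{T}}(K_{\overline{R}})$ possibly effective, so one cannot argue naively. I expect the argument to pin this down either by tracking the $2$-torsion class $K_{\overline{X}}|_{\overline{R}}$ through the residue sequence twisted by $\omega_{\overline{R}}$ (using Lemmas \ref{lemma:restaT}, \ref{lemma:onlytor} to kill the potentially-effective restriction to $\overline{T}$), or by a direct deformation-theoretic argument showing that every first-order deformation of $\N_{\overline{T}/\overline{R}}$ lifts to one of $\overline{R}$. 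Once $H^2(\T_{\overline{X}})=0$ is established, parts (i) and (ii) follow formally from the exact sequences of the first paragraph.
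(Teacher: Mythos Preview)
Your reduction of both (i) and (ii) to the single vanishing $H^2(\overline X,\T_{\overline X})=0$ is correct and is a genuine alternative to the paper's route. The paper never computes $H^2(\T_{\overline X})$ globally; instead it works componentwise with the normal bundle, using the decomposition sequence $0\to \N_{\overline X}|_{\overline R}(-\overline T)\to \N_{\overline X}\to \N_{\overline X}|_{\overline P}\to 0$ together with the sequence $0\to \N_{\overline P}\to \N_{\overline X}|_{\overline P}\to T^1_{\overline X}\to 0$ of \cite[Lemma~2]{clm} (and its analogue on $\overline R$ twisted by $-\overline T$). The relevant vanishings on each component are then obtained from the Euler and normal-bundle sequences, with the key inputs $H^2(\T_{\overline P})=0$ (rationality) and $H^2(\T_{\overline R}(-\overline T))=0$.

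Where your proposal has a real gap is the $\overline R$ step. Your residue-sequence argument does not close: $\deg\O_{\overline T}(K_{\overline R})=-K_{\overline R}^2$ can be as large as $2$ (indeed $K_{\overline R}^2\in\{-2,-1,0,1\}$, depending on how many of $\e_1,\f-\e_1,\e_9$ get contracted, cf.\ Remark~\ref{rem:chicont}), so $H^0(\O_{\overline T}(K_{\overline R}))$ is generally nonzero and you would need injectivity of a connecting map that you do not establish. Your appeals to Lemmas~\ref{lemma:restaT} and \ref{lemma:onlytor} remain speculative. The clean fix---and the point where the paper's input enters---is to use instead the exact sequence
\[
0 \longrightarrow \T_{\overline R}(-\overline T) \longrightarrow \T_{\overline R}(-\log\overline T) \longrightarrow \T_{\overline T} \longrightarrow 0,
\]
so that $H^2(\T_{\overline R}(-\log\overline T))$ is a quotient of $H^2(\T_{\overline R}(-\overline T))$. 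The latter vanishes: the paper shows that $h^2(\T_{S'}(-D'))=h^2(\T_S(-D))$ under a blow-up at a point of $D$, so the question reduces to $H^2(R,\T_R(-T))=0$, which is exactly Lemma~\ref{lemma:g1}. With this in hand (and your own argument that the $\overline P$-summand already surjects onto $H^1(\T_{\overline T})$), your log-tangent decomposition gives $H^2(\T_{\overline X})=0$ and the proof goes through. In short: the missing idea is to invoke Lemma~\ref{lemma:g1} via $\T_{\overline R}(-\overline T)$ rather than trying to control the twisted residue on $\overline T$.
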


\begin{proof}
  To prove the lemma we will argue much as in \cite[Pf. of Lemma 3]{clm}. We first have to deduce several vanishings of cohomology of sheaves on ${\overline{R}}$ and ${\overline{P}}$.

  Recall that $T^1_{\overline{X}}$ is supported on $\overline{T}$, where it is a line bundle of positive degree. Moreover, since ${\overline{T}} \equiv -K_{\overline{R}}$ and ${\overline{T}} \not \sim -K_{\overline{R}}$ on
  $\overline{R}$ and $K_{\overline{R}}^2 \leq 0$ as $\overline{R}$
  is a blow down of a blow up of $R$, we have
  \begin{equation} \label{eq:hjt1}
 h^j(T^1_{\overline{X}})=h^j(T^1_{\overline{X}} \*\O_{\overline{R}}(-{\overline{T}}))=0, \; \; j=1,2.
    \end{equation}

  We next claim that
  \begin{equation} \label{eq:h2tp}
H^2(\overline{P},\T_{\overline{P}})=0.
    \end{equation} 
 Indeed, for $f: S' \to S$ a blow up morphism of a single point of a smooth projective surface $S$ we have the dual of the sequence of relative differentials
  \[
  \xymatrix{ 0 \ar[r] & \T_{S'} \ar[r] & f^*\T_{S}  \ar[r] & \O_{\PP^1}(1) \ar[r] & 0,}
  \]
  which shows that $h^2(\T_{S'})=h^2(\T_{S})$. Thus \eqref{eq:h2tp} follows since $\overline{P}$ is birational to $\PP^2$.

  We then claim that
 \begin{equation} \label{eq:h2tr}
H^2(\overline{R},\T_{\overline{R}}(-{\overline{T}}))=0.
    \end{equation}   
Indeed, recall that $\overline{R}$ is obtained from $R:=\Sym^2(E)$ by a sequence of blow ups and downs of $(-1)$-curves always intersecting the strict transforms of $T$ in one point, as $T$ is numerically anticanonical at each step. As above, for  $f: S' \to S$ a blow up morphism of a single point of a smooth projective surface $S$ lying on a smooth curve $D \subset S$, letting $\e$ be the exceptional curve and $D' \sim f^*D -\e$ the strict transform of $D$ on $S$, we have
the  dual of the sequence of relative differentials twisted by $\O_{S'}(-D') \cong f^*\O_S(-D)\* \O_{S'}(\e)$:
 \[
  \xymatrix{ 0 \ar[r] & \T_{S'}(-D') \ar[r] & f^*(\T_{S}(-D))(\e)  \ar[r] & \O_{\PP^1} \ar[r] & 0,}
  \]
which shows that $h^2(\T_{S'}(-D'))=h^2(f^*(\T_{S}(-D))(\e))=h^2(\T_{S}(-D)\*f_*\O_{S'}(\e))= h^2(\T_{S}(-D))$. Thus \eqref{eq:h2tr} is equivalent to $h^2(\T_R(-T))=0$, which holds by Lemma \ref{lemma:g1}.

For simplicity we will in the rest of the proof denote by $\N_{\overline{X}}$, $\N_{\overline{R}}$ and $\N_{\overline{P}}$ the normal bundles of $\overline{X}$, ${\overline{R}}$ and ${\overline{P}}$ in $\PP^{g-1}$, respectively.

We claim that
 \begin{equation} \label{eq:hjnp}
H^j(\overline{P},\N_{\overline{P}})=0, \; \; j=1,2.
    \end{equation}   
 To prove this, consider the Euler sequence
  \[
  \xymatrix{ 0 \ar[r] & \O_{\overline{P}} \ar[r] & \O_{\overline{P}}(1)^{\+g}  \ar[r] & \T_{\PP^{g-1}}|_{\overline{P}} \ar[r] & 0.}
  \]
  Since $\O_{\overline{P}}(1) \sim K_{\overline{P}}+\O_{\overline{P}}(1)({\overline{T}})$ and ${\overline{T}}$ is nef (as $T^2 = 2$ on $\widetilde{P}$ and $\overline{P}$ is obtained from $\widetilde{P}$ by blowing down $(-1)$-curves), we get  $h^j( \O_{\overline{P}}(1))=0$ for $j=1,2$, whence $h^j(\T_{\PP^{g-1}}|_{\overline{P}})=0$ for $j=1,2$. Then \eqref{eq:hjnp} follows from  \eqref{eq:h2tp} and the normal bundle sequence
 \[
 \xymatrix{ 0 \ar[r] & \T_{\overline{P}} \ar[r] & \T_{\PP^{g-1}}|_{\overline{P}} \ar[r] & \N_{\overline{P}} \ar[r] & 0.}
  \]

 We then claim that
 \begin{equation} \label{eq:hjnr}
H^j(\overline{R},\N_{\overline{R}}(-{\overline{T}}))=0, \; \; j=1,2.
    \end{equation}   
To prove this, consider the Euler sequence twisted by $\O_{\overline{R}}(-{\overline{T}}))$:
  \[
  \xymatrix{ 0 \ar[r] & \O_{\overline{R}}(-{\overline{T}}) \ar[r] & \O_{\overline{R}}(1)(-{\overline{T}})^{\+g}  \ar[r] & \T_{\PP^{g-1}}|_{\overline{P}}(-{\overline{T}}) \ar[r] & 0.}
  \]
  Since $\O_{\overline{R}}(1)(-{\overline{T}}) \equiv \O_{\overline{R}}(1)(K_{\overline{R}})$ we get
  $h^j( \O_{\overline{R}}(1)(-{\overline{T}}))=0$ for $j=1,2$. Moreover 
  $h^2(\O_{\overline{R}}(-{\overline{T}}))=h^0(K_{\overline{R}}+{\overline{T}})=0$, as ${\overline{T}}$ is not anticanonical. Hence
$h^j(\T_{\PP^{g-1}}|_{\overline{R}}(-{\overline{T}}))=0$ for $j=1,2$. Then, from the normal bundle sequence twisted by $\O_{\overline{R}}(-{\overline{T}})$:
 \[
 \xymatrix{ 0 \ar[r] & \T_{\overline{R}}(-{\overline{T}}) \ar[r] & \T_{\PP^{g-1}}|_{\overline{R}}(-{\overline{T}}) \ar[r] & \N_{\overline{R}}(-{\overline{T}}) \ar[r] & 0,}
  \] 
and \eqref{eq:h2tr}, we obtain \eqref{eq:hjnr}.

We now prove that
 \begin{equation} \label{eq:hjnxap}
H^j(\overline{P},{\N_{\overline{X}}}|_{\overline{P}})=0, \; \; j=1,2.
    \end{equation}   
 To prove this, we use the short exact sequence
 \begin{equation} \label{eq:clmlemma2}
 \xymatrix{ 0 \ar[r] & \N_{\overline{P}} \ar[r] & {\N_{\overline{X}}}|_{\overline{P}}
   \ar[r] & T^1_{\overline{X}} \ar[r] & 0,}
  \end{equation} 
 whose existence is proved  in \cite[Lemma 2]{clm}. Then \eqref{eq:hjnxap} follows from  \eqref{eq:hjt1} and \eqref{eq:hjnp}.

 Similarly, using the sequence similar to \eqref{eq:clmlemma2} on $\overline{R}$
 twisted by $-{\overline{T}}$ and \eqref{eq:hjt1} and \eqref{eq:hjnr}, we obtain that
 \begin{equation} \label{eq:hjnxar}
H^j(\overline{R},{\N_{\overline{X}}}|_{\overline{R}}(-{\overline{T}}))=0, \; \; j=1,2.
    \end{equation}  

 We can now prove the lemma. From \eqref{eq:hjnxap}, \eqref{eq:hjnxar} and the short exact sequence
 \[
 \xymatrix{ 0 \ar[r] & {\N_{\overline{X}}}|_{\overline{R}}(-{\overline{T}}) 
\ar[r] & {\N_{\overline{X}}} 
   \ar[r] & {\N_{\overline{X}}}|_{\overline{P}}\ar[r] & 0,}
  \]
  we get $h^j(\N_{\overline{X}})=0$ for $j=1,2$, proving (i), as well as the restriction map
  $H^0(\overline{X},\N_{\overline{X}}) \to  H^0(\overline{P},{\N_{\overline{X}}}|_{\overline{P}})$ being surjective. Since $H^0(\overline{P},{\N_{\overline{X}}}|_{\overline{P}})$ surjects onto $H^0(T^1_{\overline{X}})$ by \eqref{eq:clmlemma2} and \eqref{eq:hjnp}, we see that $H^0(\overline{X},\N_{\overline{X}})$ surjects onto $H^0(T^1_{\overline{X}})$, proving (ii). 
  \end{proof}

\begin{thm} \label{thm:hilbert}
Let $\overline{X}$ be as in Proposition \ref{prop:immersione}. Then $\overline{X}$ is
represented by a smooth point $[\overline{X}]$ of the Hilbert scheme parametrizing surfaces of degree $2g-2$ in $\PP^{g-1}$. The irreducible component $\mathfrak{H}$ containing $[\overline{X}]$ 
is reduced and has dimension $g^2+9$ and its general point parametrizes a smooth Enriques surface $S$.
\end{thm}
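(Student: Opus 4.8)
The plan is to adapt the degeneration argument of Ciliberto--Lopez--Miranda for $K3$ surfaces \cite{clm}, feeding in the geometric description of $\overline{X}=\overline{R}\cup_{\overline{T}}\overline{P}$ from Proposition \ref{prop:immersione} and the cohomological input of Lemma \ref{lemma:cohnormal}. First I would observe that the Zariski tangent space to the Hilbert scheme at $[\overline{X}]$ is $H^0(\overline{X},\N_{\overline{X}/\PP^{g-1}})$ and that obstructions to deforming $\overline{X}$ lie in $H^1(\overline{X},\N_{\overline{X}/\PP^{g-1}})$, which vanishes by Lemma \ref{lemma:cohnormal}(i). Hence $[\overline{X}]$ is a smooth point, it lies on a unique irreducible component $\mathfrak{H}$, and $\mathfrak{H}$ has dimension $h^0(\overline{X},\N_{\overline{X}/\PP^{g-1}})$ near $[\overline{X}]$; moreover $\mathfrak{H}$ is reduced, since the Hilbert scheme is reduced in a neighbourhood of the smooth point $[\overline{X}]$ and such a neighbourhood meets the irreducible variety $\mathfrak{H}$ in a dense open subset (so the generic point of $\mathfrak{H}$ is reduced).

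The key step is to produce a smoothing of $\overline{X}$ inside $\PP^{g-1}$ whose general fibre is a smooth surface. Recall that $\overline{X}$ has normal crossings along the elliptic curve $\overline{T}$, with $T^1_{\overline{X}}\cong\N_{\overline{T}/\overline{R}}\otimes\N_{\overline{T}/\overline{P}}$ a line bundle of positive degree on $\overline{T}$ (its degree being the number of $(-1)$-curves contracted by $\varphi_L$, which is at least one by Proposition \ref{prop:immersione}); thus a general section $u$ of $T^1_{\overline{X}}$ has only simple zeros. By Lemma \ref{lemma:cohnormal}(ii) I would lift $u$ to a global section $v$ of $\N_{\overline{X}/\PP^{g-1}}$; since $H^1(\N_{\overline{X}/\PP^{g-1}})=0$, the first-order deformation $v$ is unobstructed, so there is a smooth curve germ $(\Delta,0)\to(\mathfrak{H},[\overline{X}])$ with derivative $v$, and pulling back the universal family gives a flat family $\mathcal{X}\to\Delta$ with $\mathcal{X}_0=\overline{X}\subset\PP^{g-1}$. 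Along $\overline{T}$ the family is \'etale-locally $\{xy=t\,u(z)\}$, with $z$ a local coordinate on $\overline{T}$; since $\overline{X}$ is smooth away from $\overline{T}$ (by Proposition \ref{prop:immersione}, $\overline{R}$ and $\overline{P}$ are smooth and meet only transversally along $\overline{T}$), hence $\mathcal{X}$ is smooth along $\mathcal{X}_0\setminus\overline{T}$ by flatness, a standard local normal-form analysis shows that $\mathcal{X}$ is smooth except for ordinary double points at the finitely many simple zeros of $u$, all of which lie on $\mathcal{X}_0$, and that consequently $\mathcal{X}_t$ is smooth for every $t\neq0$. In particular the locus of $\mathfrak{H}$ parametrizing smooth surfaces is open, nonempty, hence dense.

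Next I would identify the general fibre. By flatness $\chi(\O_{\mathcal{X}_t})=\chi(\O_{\overline{X}})=1$, and by semicontinuity together with Proposition \ref{prop:immersione}(i), $h^1(\O_{\mathcal{X}_t})\leq h^1(\O_{\overline{X}})=0$ and $h^0(\omega_{\mathcal{X}_t})=h^2(\O_{\mathcal{X}_t})\leq h^2(\O_{\overline{X}})=0$, so $\mathcal{X}_t$ is a smooth surface with $q=p_g=0$. Since $\omega_{\mathcal{X}/\Delta}$ restricts on $\mathcal{X}_0$ to the canonical bundle $\omega_{\overline{X}}$ of Proposition \ref{prop:immersione}(ii), which is $2$-torsion and nontrivial, and since $h^0(\omega_{\mathcal{X}_t}^{\otimes2})\leq h^0(\O_{\overline{X}})=1$ and $h^0(\omega_{\mathcal{X}_t})=0$, the bundle $\omega_{\mathcal{X}_t}$ is $2$-torsion and nontrivial; by the Enriques--Kodaira classification $\mathcal{X}_t$ is an Enriques surface, with polarization $\O_{\mathcal{X}_t}(1)$ of degree $2g-2$ (the degree being constant in the flat family, equal to $\deg\overline{X}=L^2=2g-2$). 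For the dimension, since $\mathfrak{H}$ is reduced it is smooth at its general point $[S]$, so $\dim\mathfrak{H}=h^0(S,\N_{S/\PP^{g-1}})$ with $h^1(S,\N_{S/\PP^{g-1}})=0$; moreover $h^2(S,\N_{S/\PP^{g-1}})=0$, because $h^j(\O_S(1))=0$ for $j\geq1$ (vanishing), so the Euler sequence gives $h^2(\T_{\PP^{g-1}}|_S)=0$ and then the normal bundle sequence gives $h^2(\N_{S/\PP^{g-1}})=0$ (as $H^3(\T_S)=0$). Therefore
\[
\dim\mathfrak{H}=\chi\bigl(\N_{S/\PP^{g-1}}\bigr)=\chi\bigl(\T_{\PP^{g-1}}|_S\bigr)-\chi(\T_S)=\bigl(g\,\chi(\O_S(1))-\chi(\O_S)\bigr)-\bigl(2K_S^2-10\chi(\O_S)\bigr)=(g^2-1)+10=g^2+9,
\]
using $\chi(\O_S(1))=\tfrac12(2g-2)+1=g$, $\chi(\O_S)=1$ and $K_S^2=0$.

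The main obstacle is the smoothing step: producing a deformation of $\overline{X}$ whose total space has only isolated, well-understood singularities and whose general fibre is genuinely smooth. The crucial ingredient that makes the Ciliberto--Lopez--Miranda mechanism work is the surjectivity $H^0(\N_{\overline{X}/\PP^{g-1}})\twoheadrightarrow H^0(T^1_{\overline{X}})$ of Lemma \ref{lemma:cohnormal}(ii); granting it and the vanishing in (i), what remains is the local normal-form computation along $\overline{T}$ and the classification-theoretic identification of the smooth limit, and I expect the bookkeeping in that local analysis to be the only delicate point.
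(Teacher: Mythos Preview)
Your argument follows the paper's proof essentially step for step: smoothness of $[\overline{X}]$ from Lemma \ref{lemma:cohnormal}(i), smoothing of the double curve via the surjection onto $H^0(T^1_{\overline{X}})$ in (ii), identification of the general fibre using the invariants of Proposition \ref{prop:immersione}, and the dimension count from the Euler and normal bundle sequences on a smooth Enriques fibre (the paper computes $h^0(\N_{S/\PP^{g-1}})$ directly rather than via $\chi$, but this is cosmetic). One step to tighten: the inequalities $h^0(\omega_{\mathcal{X}_t}^{\otimes 2})\le 1$ and $h^0(\omega_{\mathcal{X}_t})=0$ by themselves do not force $\omega_{\mathcal{X}_t}^{\otimes 2}\cong\O_{\mathcal{X}_t}$; the paper instead observes that $2K_{\overline{X}}=0$ gives $K_S\equiv 0$ numerically (intersection numbers being constant in the flat family), which together with $q=p_g=0$ already pins down $S$ as Enriques by classification.
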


\begin{proof}
  Since $H^1(\N_{\overline{X}/\PP^{g-1}})=0$ by Lemma \ref{lemma:cohnormal}(i), the
  point $[\overline{X}]$ representing $\overline{X}$ in the Hilbert scheme of $\PP^{g-1}$ is smooth \cite[Thm. 4.3.5]{ser}
and thus 
belongs to a single reduced component $\mathfrak{H}$ of it. By Lemma \ref{lemma:cohnormal}(ii), a general tangent vector to $\mathfrak{H}$ at $[\overline{X}]$   represents a first-order embedded deformation of $\overline{X}$ that smooths the double curve ${\overline{T}}$. Hence the general point in $\mathfrak{H}$ represents a smooth irreducible surface $S$.
  Since $h^j(\O_{\overline{X}})=0$ for $j=1,2$ by  Proposition \ref{prop:immersione}(i), also $h^j(\O_S)=0$ for $j=1,2$. Moreover $2K_{\overline{X}}=0$ by Proposition \ref{prop:immersione}(ii), whence $K_S \equiv 0$.  It follows that $S$ is a smooth Enriques surface.

  Since $[S]$ is a smooth point of $\mathfrak{H}$, we  have $\dim \mathfrak{H}=h^0(\N_{S/\PP^{g-1}})$, which can be computed using $h^0(\T_S)=h^2(\T_S)=0$ and $h^1(\T_S)=10$, the
 normal bundle sequence
 \[
 \xymatrix{ 0 \ar[r] & \T_{S} \ar[r] & \T_{\PP^{g-1}}|_{S} \ar[r] & \N_{S/\PP^{g-1}} \ar[r] & 0,}
  \]  
and the Euler sequence for $S \subset \PP^{g-1}$.
\end{proof}

\begin{corollary} \label{cor:piclimite}
  With the same assumptions as in Proposition \ref{prop:immersione}, there is a flat family $\pi:\mathfrak{X} \to \DD$ over the unit disc such that $\mathfrak{X}$ is smooth, $\pi^{-1}(0)=X$ and $S_t:=\pi^{-1}(t)$ is a smooth Enriques surface for $t \neq 0$, and a line bundle $\L$ on $\mathfrak{X}$ such that $\L|_{X}=L$ and $\L|_{S_t}$ is very ample for $t \neq 0$.

Furthermore, there is a short exact sequence
\begin{equation} \label{eq:piclimite}
  \xymatrix{
      0 \ar[r] & \ZZ[\xi] \ar[r] & \Pic X \cong H^2(\mathfrak{X},\ZZ) \ar[r]^{\iota_t^*} & H^2(S_t,\ZZ) \cong \Pic S_t  \ar[r] & 0,}
  \end{equation}
where $\iota_t: S_t \subset \mathfrak{X}$ is the inclusion. 
\end{corollary}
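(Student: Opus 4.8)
The plan is to build the one-parameter degeneration from the already-established deformation theory of $\overline{X}$ and then to identify the Picard group of the total space with $H^2$ and compute the cokernel of the specialization map. First I would apply Theorem \ref{thm:hilbert}: since $[\overline{X}]$ is a smooth point of the Hilbert scheme lying on a component $\mathfrak{H}$ whose general point is a smooth Enriques surface, I can choose a smooth curve through $[\overline{X}]$ in $\mathfrak{H}$ meeting the Enriques locus, pull back the universal family, and (after base change and shrinking) obtain a flat family $\pi:\mathfrak{X}\to\DD$ with $\pi^{-1}(0)=\overline{X}\cong X$ (the isomorphism coming from Proposition \ref{prop:immersione}, where $\varphi_L$ contracts only $(-1)$-curves and $X$ denotes the abstract surface) and $S_t$ a smooth Enriques surface for $t\neq 0$. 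The total space $\mathfrak{X}$ is smooth because $X$ is a semi-stable (normal crossings) surface and the family smooths the double curve $\overline{T}$, so $\mathfrak{X}$ is smooth along $X$; shrinking $\DD$ removes any other singularities. The line bundle $\L$ is simply the restriction of $\O_{\PP^{g-1}}(1)$ to $\mathfrak{X}$ (viewing $\mathfrak{X}\subset\PP^{g-1}\times\DD$), which restricts to $L$ on $X$ and to a very ample bundle on each $S_t$.

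Next I would establish the exact sequence \eqref{eq:piclimite}. The key input is the Clemens–Schmid type / Friedman-style analysis of the Picard group of a smoothing of a semi-stable surface with $H^1(\O)=H^2(\O)=0$. Concretely: because $\mathfrak{X}$ is smooth and retracts onto $X$, we have $H^2(\mathfrak{X},\ZZ)\cong H^2(X,\ZZ)\cong \Pic X$ (the last isomorphism by Lemma \ref{lemma:restcoho}, which gives $h^i(\O_X)=0$ for $i=1,2$; the same vanishing on $\mathfrak{X}$ via the retraction forces $\Pic\mathfrak{X}=H^2(\mathfrak{X},\ZZ)$). Similarly $\Pic S_t\cong H^2(S_t,\ZZ)$ since $S_t$ is Enriques. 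The restriction map $\iota_t^*:H^2(\mathfrak{X},\ZZ)\to H^2(S_t,\ZZ)$ is the specialization map, and the point is to show it is surjective with kernel exactly $\ZZ[\xi]$. Surjectivity follows because every class on $S_t$ extends: the monodromy around $0$ is trivial (the family is a smoothing of a surface with trivial $T^1_X/$torsion issues — more precisely $T^1_{\overline{X}}\cong\O_{\overline{T}}$ plus degree of contracted curves, and a local computation as in \cite{clm} shows the vanishing cycle is a single class), so $H^2(S_t)^{T}=H^2(S_t)$ and by the local invariant cycle theorem $H^2(\mathfrak{X})\to H^2(S_t)$ is onto. For the kernel: a class on $\mathfrak{X}$ restricting to $0$ on a general fiber is, by flatness and properness, supported on the special fiber; the Cartier divisors on $X=\widetilde{R}\cup_T\widetilde{P}$ that become trivial on nearby fibers are generated by the component classes $[\widetilde{R}]$ and $[\widetilde{P}]$, whose difference restricts on $X$ to $(-T,T)=-\xi$ by \eqref{eq:xi} (and whose sum is trivial, being the class of the whole fiber); hence the kernel is $\ZZ[\xi]$.

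The main obstacle I anticipate is making the last step — the precise identification of $\ker(\iota_t^*)$ with $\ZZ[\xi]$ — fully rigorous, i.e. ruling out that the kernel is larger (e.g. contains a torsion class or a multiple-of-$\xi$ subtlety) or that $\xi$ itself could already restrict nontrivially. For the former, one uses that $\xi$ is primitive and non-torsion in $\Pic X$: indeed $\xi=(T,-T)$ is manifestly non-torsion since $T^2=2$ on $\widetilde P$ and $\xi^2 = T^2_{\widetilde R}+T^2_{\widetilde P}=-2+2=0$ but $\xi\cdot L\neq 0$ for suitable $L$, and by Lemma \ref{lemma:onlytor} the only torsion class $K_X$ restricts to $K_{S_t}\neq 0$ on $S_t$ (it deforms with the family, by Proposition \ref{prop:immersione}(ii) and Theorem \ref{thm:hilbert}), so no torsion lies in the kernel. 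For the latter, $\xi=(T,-T)$ restricts to $0$ on $S_t$ because $\widetilde{R}$ and $\widetilde{P}$ individually do not extend to $\mathfrak{X}$ but their formal neighborhoods show $\O_{S_t}(\xi)$ is the normal-bundle difference, which is trivial after smoothing — this is exactly the Enriques analogue of \cite[proof of Thm. 1]{clm} and I would cite that argument, adapting it using Lemma \ref{lemma:restcoho} and Proposition \ref{prop:immersione}(i). The rank count $\rk\Pic X = \rk\Pic S_t + 1$ (i.e. $11=10+1$, consistent with $\dim\mathfrak{H}=g^2+9$ and $h^1(\T_S)=10$) then confirms that $\ZZ[\xi]$ is the entire kernel, completing the exactness of \eqref{eq:piclimite}.
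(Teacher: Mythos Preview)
There is a genuine gap in your construction of the family. You write $\pi^{-1}(0)=\overline{X}\cong X$, but this isomorphism does not hold: Proposition \ref{prop:immersione} explicitly states that $\varphi_L$ contracts at least one $(-1)$-curve (namely $\e_8$), so $\overline{X}$ is a strict blow-down of $X$. Consequently the embedded family you pull back from the Hilbert scheme has central fiber $\overline{X}$, not $X$. Moreover, $\overline{X}$ is \emph{not} $d$-semistable: by \eqref{eq:ss2} and the remark after it, $T^1_{\overline{X}}$ is a line bundle on $\overline{T}$ of strictly positive degree (equal to the number of contracted curves). Hence the total space $\mathfrak{X}'$ of the embedded smoothing is singular, with an ordinary double point over each zero of the section of $T^1_{\overline{X}}$ corresponding to the chosen deformation direction; your claim that ``$\mathfrak{X}$ is smooth along $X$'' is therefore unjustified.

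The paper fixes this as follows: it chooses the section $s\in H^0(T^1_{\overline{X}})$ whose zeros are exactly the images of the contracted $(-1)$-curves, lifts it via Lemma \ref{lemma:cohnormal}(ii), and then performs a \emph{small resolution} of $\mathfrak{X}'$ at these finitely many threefold ordinary double points (blowing up and contracting one ruling of each exceptional quadric), choosing the rulings so that the resulting smooth threefold $\mathfrak{X}$ has central fiber $X$ rather than $\overline{X}$. This step, standard in \cite{clm}, is what simultaneously produces a smooth total space and recovers $X$ as the special fiber; it cannot be bypassed by ``shrinking $\DD$''. Your treatment of the exact sequence is in the right spirit, though the paper makes it precise by invoking the Clemens--Schmid sequence (via \cite{M2}) with $\QQ$-coefficients, then passing to $\ZZ$ modulo torsion and using Lemma \ref{lemma:onlytor} to control the torsion on both sides; the indivisibility of $\xi$ then pins down the kernel.
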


\begin{proof}
  As mentioned above, $\deg T^1_{\overline{X}}$ equals the number of contracted curves by $\varphi_{L}$ and there is a section $s \in H^0(T^1_{\overline{X}})$ such that the support of its zero scheme $Z(s)$ is precisely the  images of the contracted curves. By Lemma \ref{lemma:cohnormal}(ii) this section can be lifted to a section of $H^0(\overline{X},\N_{\overline{X}/\PP^{g-1}})$, which in turn defines an embedded deformation of $\overline{X}$. Let $p:\mathfrak{X'} \to \DD$ be the universal family. Then  $\mathfrak{X}'$ is singular precisely along $Z(s)$, cf., e.g.,  
\cite[Chp. 2]{ser} or \cite[\S 2]{fri}, and in particular the general fiber of $p$ is a smooth Enriques surface.  The singularities of $\mathfrak{X}'$ can be resolved by a small resolution in the following way, cf. e.g. \cite[p.~647]{clm}: the tangent cone to $\mathfrak{X}'$ at each of the singular points has rank $4$. The exceptional divisors of the blow up $\widetilde{\mathfrak{X}} \to \mathfrak{X}'$ at these points are rank $4$ quadric surfaces. These can be contracted along any of the two rulings on one of the two irreducible components of the strict transform of $\overline{X}$ in $\widetilde{\mathfrak{X}}$ by a contraction map $\widetilde{\mathfrak{X}} \to {\mathfrak{X}}$. One obtains a morphism $\mathfrak{X} \to \mathfrak{X}'$, which is the desired small resolution, and one can make sure that the central fiber is $X$ by choosing
which of the components of the central fiber to contract along.

  Since the sum of the irregularities of the components of $X$ equals the genus of the double curve of $X$, and $b_1(S_t)=0$ for $t \neq 0$,
 we have an exact sequence
\[ \xymatrix{
  0 \ar[r] & H^0(S_t,\QQ)   \ar[r] & H_4(\mathfrak{X},\QQ) \ar[r] \ar@{}[d]|*=0[@]{\cong} & H^2(\mathfrak{X},\QQ) \ar[r]^{{\iota_t}^*} \ar@{}[d]|*=0[@]{\cong} & H^2(S_t,\QQ) \ar[r] & 0, \\
  & &H_4(X,\QQ)& H^2(X,\QQ)  &&}
    \]
    cf. \cite[\S 4(c)]{M2}.
    Since $H^0(S_t,\ZZ) \cong \ZZ$  and  $H_4(X,\ZZ) \cong \ZZ^2$,  we obtain an exact sequence
\[ \xymatrix{
  0 \ar[r] & \ZZ \ar[r] & H^2(X,\ZZ)/\mathfrak{T} \ar[r]^{i_t^*} & H^2(S_t,\ZZ)/\mathfrak{T}_t \ar[r] & 0,}
  \]
  where $\mathfrak{T} \subset H^2(X,\ZZ)$ and
  $\mathfrak{T}_t \subset H^2(S_t,\ZZ)$ are the torsion subgroups. 
  We have  $\Pic S_t \cong H^2(S_t,\ZZ)$  and $\mathfrak{T}_t\cong \ZZ_2[K_{S_t}]$,  and 
also $\Pic X \cong H^2(X,\ZZ)$ by  Lemma \ref{lemma:restcoho} and 
  $\mathfrak{T} \cong \ZZ_2[K_X]$ by Lemma \ref{lemma:onlytor}. Since 
    $[\xi] \in \ker{{\iota_t}^*}$  and $\xi$ is indivisible,  the sequence \eqref{eq:piclimite} follows.  
\end{proof}

By the latter result we may extend the notions of isotropic $10$-sequence, of simple isotropic decompositions
and of admitting the same simple  decomposition type (modulo $\xi$) to all members of $\D$.

The next result is crucial in the proof of Theorem \ref{mainthm1}.

\begin{proposition} \label{prop:main}
  Let $(X,L)$ be as in Proposition \ref{prop:immersione} and assume that one of the following holds:
  \begin{itemize}
  \item[(i)] $a_0$ is odd, $a_9$ is even;
  \item[(ii)] $a_0$ is even and nonzero, $a_9$ is odd;
\item[(iii)] $a_0,a_9,a_{10}$ are odd, $a_1$ is even;
  \item[(iv)] $a_i$ is odd for some $i \in \{2,\ldots,7\}$.
    \end{itemize}
 Then there is a flat family $f:\mathfrak{X} \to E^{\circ}$, where $E^{\circ} \subset E$ is a Zariski-open dense subset,  parametrizing  surfaces in $\D$,
  a line bundle $\L$ on $\mathfrak{X}$ and points $t_0, t_0 \+ \eta \in E^{\circ}$ such that
  \begin{itemize}
  \item $f^{-1}(t_0)=f^{-1}(t_0\+\eta)=X$,
    \item $\L|_{f^{-1}(t_0)} \cong L$ and $\L|_{f^{-1}(t_0\+\eta)} \cong L+K_X$.
    \end{itemize}
\end{proposition}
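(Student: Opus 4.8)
The plan is to exhibit $L$ and $L+K_X$ as the two restrictions of a single line bundle over a one‑parameter subfamily of $\D$, obtained by letting one of the \emph{auxiliary points} in the construction of $X$ run over a dense open $E^{\circ}\subset E$, while the compensating motion is absorbed by the point $x_8$, whose exceptional curve $\e_8$, and whose associated isotropic divisor $E_8$, does \emph{not} occur in the decomposition \eqref{dec:immersione} of $L$. Concretely: keep $E$, $\eta$, the cubic $T\subset\PP^2$ and all but one of $x_1,\dots,x_7,x_9$ fixed; move a chosen parameter $t$ — one of the $\alpha$‑, $\beta$‑ or $p$‑parameters occurring in the definitions of the $E_k$ — which moves exactly one blown‑up point, among $x_1,x_9$ on $\widetilde R$ or $x_2,\dots,x_7$ on $\widetilde P$; and translate $x_8$ along $T$ so that the class of $x_1+\cdots+x_9$ in $\Pic T$, and hence condition \eqref{eq:cond}, is preserved. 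The conditions \eqref{eq:dp} and \eqref{eq:df} persist on a dense open and \eqref{eq:condcoff2}--\eqref{eq:condcoffJ3} are untouched, since they involve only the $a_i$; so every fibre lies in $\D$. The crucial point is that the moving blown‑up point is recovered, as a point of $R$ or of $\PP^2$, from \emph{both} $t$ and $t\oplus\eta$: here one uses that the pertinent degree‑two cover of $E$ (either $\pi|_T\colon T\to E$ or $E\to T_\eta$) is classified by $\eta$. Thus the whole configuration, and with it $X$ itself, is literally the same over $t_0:=t$ and over $t_0\oplus\eta$, giving $f^{-1}(t_0)=f^{-1}(t_0\oplus\eta)=X$.

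Next, extend every isotropic divisor occurring in \eqref{dec:immersione} to a relative Cartier divisor on $\mathfrak X$ — each is cut out by explicit systems on $\widetilde R$ and $\widetilde P$ varying algebraically with the moving data — and let $\L$ be the resulting extension of $L$, so that $\L|_{f^{-1}(t_0)}\cong L$ by construction and \eqref{dec:immersione}. Among these, the only divisors whose restriction can differ between the two copies of $X$ are the ones built from the moving parameter and the moved point; and for those, the formulas \eqref{eq:car2}, \eqref{eq:car3}, \eqref{eq:defE10}, \eqref{eq:110}, together with \eqref{eq:relsf2} and \eqref{eq:canrist}, say exactly that replacing $\alpha_i$, $p_9$ or $\beta_1$ by its $\eta$‑translate replaces the divisor by its twist by $K_X$. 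Hence $\L|_{f^{-1}(t_0\oplus\eta)}\cong L+mK_X$, where $m$ is the sum of the coefficients $a_k$ of the twisted divisors; it remains to verify, case by case, that $m$ is odd, for then $L+mK_X\cong L+K_X$ by \eqref{eq:doppioz}.

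The case analysis is the bookkeeping of which divisors twist. In case (iv) take $t=\alpha_i$ for the index $i\in\{2,\dots,7\}$ with $a_i$ odd: $t$ moves $x_i$ (so $\widetilde P$ varies, but $x_i$ returns at the endpoints), twists only $E_i$, and $m=a_i$ is odd. In cases (i) and (ii) take $t=p_9$: then $x_9$ moves as well, so both $E_{9,10}$ (directly, via $p_9$) and $E_9$ (via the induced motion of $\alpha_9$) twist, and since among the summands of \eqref{dec:immersione} only $E_9$ and $E_{9,10}$ depend on $x_9$, one gets $m=a_0+a_9$, which is odd in both cases. In case (iii) take $t=\beta_1$: then $x_1$ moves as well, so $E_{10}$ (directly, via $\beta_1$) and possibly $E_1$ (via the induced motion of $p_1$) twist, whence $m=a_{10}$ or $m=a_{10}+a_1$, odd because $a_{10}$ is odd and $a_1$ is even. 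In every case the accompanying motion of $x_8$ changes only $E_8$, which is absent from $L$. This gives $\L|_{f^{-1}(t_0\oplus\eta)}\cong L+K_X$, as claimed.

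The step I expect to be the real obstacle is the one used implicitly above: determining, for each family, exactly which of the extended divisors twist, i.e. computing the monodromy of the $\ZZ/2$‑covers of $E^{\circ}$ that record whether the ancillary parameters $\alpha_9$, $p_1$, etc., come back to themselves or to their $\eta$‑translates as $t$ runs from $t_0$ to $t_0\oplus\eta$. This reduces to a computation with the isogenies $\pi|_T\colon T\to E$ and $E\to T_\eta=T$ on $R=\Sym^2 E$, essentially an identification of the relevant $2$‑torsion classes; Lemma \ref{lemma:restaT} and Lemma \ref{lemma:onlytor} are then invoked — the latter to ensure the resulting twist is necessarily $0$ or $K_X$ and nothing else. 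The hypotheses (i)--(iv) are precisely the parity patterns of the $a_i$ for which one of these families can be rigged so that $m$ turns out odd.
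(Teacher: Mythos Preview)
Your overall plan is exactly the paper's: fix all data except one blown-up point, let it move along $E$ via a degree-two map that identifies $t$ with $t\oplus\eta$, absorb the compensating motion in $x_8$ (whose divisor $E_8$ is absent from \eqref{dec:immersione}), and track which of the $E_k$ get replaced by $E_k+K_X$. Case (iv) is correct and coincides with the paper's argument.

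The gap is in cases (i)--(iii), and it is precisely the one you flag yourself at the end. For (i)--(ii) you take $t=p_9$ and assert that \emph{both} $E_{9,10}$ and $E_9$ twist, giving $m=a_0+a_9$. But with this family only the twist of $E_{9,10}=(\s_{p_9}-\e_9,0)$ is forced (since $\s_{p_9}\mapsto\s_{p_9\oplus\eta}$ by construction); whether $\alpha_9$ returns to itself or to $\alpha_9\oplus\eta$ is a genuine $\ZZ/2$-monodromy that you do not compute. The paper does not compute it either: instead it separates the two cases and uses \emph{two different} families. In (i) it uses your family $\Psi\colon t\mapsto \s_t\cap T$, notes that $E_{9,10}$ definitely twists while $E_9$ may or may not, and invokes $a_9$ even to make the latter irrelevant. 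In (ii) it uses a second map $\Phi\colon E\to T$, sending $t$ to the residual point of $\ell_t\cap T$ after removing $\f_t\cap T$; the key observation is that $\Phi(t)=\Phi(t\oplus\eta)$ but $\ell_t\neq\ell_{t\oplus\eta}$, so $E_9=(\f_t+\e_9,\ell_t)$ definitely twists, while now $a_0$ even makes the status of $E_{9,10}$ irrelevant. Thus the paper never needs $m=a_0+a_9$; it arranges matters so that the one divisor whose twist is \emph{certain} carries the odd coefficient.

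For (iii) your family ``$t=\beta_1$'' is not clearly defined: $\beta_1$ is a derived quantity of $x_1$ via the chain $x_1\mapsto\pi(x_1)\mapsto x_1'\mapsto\{\beta_1,\beta_1\oplus\eta\}$, and it is not evident that one can invert this so that $x_1$ returns to itself at $t_0\oplus\eta$ while $\beta_1$ definitely twists. The paper sidesteps this entirely by first \emph{rewriting} the decomposition: using $E_{9,10}+E_{10}\sim E_{1,9}+E_1$ (a consequence of \eqref{eq:ijk}) one gets
\[
L\sim (a_0-1)E_{9,10}+(a_1+1)E_1+a_2E_2+\cdots+a_7E_7+a_9E_9+(a_{10}-1)E_{10}+E_{1,9},
\]
in which only $E_1$ and $E_{10}$ depend on $x_1$, with coefficients $a_1+1$ (odd) and $a_{10}-1$ (even). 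Now the $\Psi$-family with $x_1$ in place of $x_9$ makes $E_1$ definitely twist, and the even coefficient on $E_{10}$ absorbs whatever happens to $\beta_1$.

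In short, your identified obstacle is real, and the paper's contribution is not to resolve the monodromy but to dodge it: two tailored families for (i) versus (ii), and a preliminary change of decomposition for (iii), so that in every case the coefficient of the guaranteed-to-twist divisor is odd and the uncertain ones are even.
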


\begin{proof}
 {\bf Case (i).} 
By \eqref{eq:duesez} we have a double cover $\Psi:E \to T$ mapping $t \in E$ to $\s_{t} \cap T=\{t+(t\+\eta)\}$, which identifies $t$ with $t \+ \eta$.

Pick seven general distinct points $x_1,\ldots, x_7$ on $T$.
For $t \in E$, set $x_9^{t}:=\Psi(t)$ and let $x_8^{t}$ be the unique point on $T$ such that $x_1+x_2+\cdots+x_7+x_8^{t}+x_9^{t} \in |\N_{T/R} \* \N_{T/\PP^2}|$.
Let
$\widetilde{P}_{t}:=\Bl_{x_2,\ldots,x_7,x_8^{t}}\PP^2$,
$\widetilde{R}_{t}:=\Bl_{x_1,x_9^{t}}R$ and $X_{t}:=\widetilde{R}_{t} \cup_T \widetilde{P}_{t}$, obtained by the obvious gluing. Then
there is a Zariski-open dense subset $E^{\circ}$ of $E$ such that
$\{X_{t}\}_{t \in E^{\circ}}$ is a flat family  of  surfaces in $\D$, with $X_{t}=X_{t\+\eta}$, since $x_9^{t}=\Psi(t)=\Psi(t\+\eta)$. Let $X=X_{t_0}$ for some $t_0 \in E^{\circ}$, with $x_8^{t_0}=x_8$ and $x_9^{t_0}=x_9$.
As $t_0$ deforms to $t_0 \+ \eta$, the surface $X=X_{t_0}$ deforms nontrivially back to itself. The divisor $\s_{t_0}$ deforms to $\s_{t_0\+\eta}$, whence $E_{9,10}=(\s_{t_0}-\e_9,0)$ deforms to $E_{9,10}+K_X=(\s_{t_0\+\eta}-\e_9,0)$, cf. \eqref{eq:110}. 
The divisor $E_9$ depends on $x_9$, and under this process it may deform either to
itself or to $E_9+K_X$, as $\f_{\alpha_9}$ may deform to itself or $\f_{\alpha_9 \+ \eta}$. Since $a_9$ is even, $a_9E_9$ will in any case deform back to itself. All other divisors
present in the decomposition \eqref{dec:immersione} are independent of both $x_8$ and $x_9$, thus remain invariant. Hence $L$ deforms to $L+K_X$ as $t_0$ deforms to $t_0 \+ \eta$.

{\bf Case (ii).} 
For each $t \in E$, let $\ell_{t}$ be the unique member of $|\ell|$ 
through the two points in $\f_{t}\cap T$.
We have a morphism $\Phi:E \to T$ mapping $t \in E$ to the residual intersection point of  $\ell_{t}$ with $T$; in other words
$\Phi(t)=(\ell_{t} \cap T) \setminus (\f_{t}\cap T)$. We note that $\Phi$ is a double cover, identifying each $t \in E$ with $t \+ \eta$. 
A key observation is that, by contrast, $\ell_{t} \neq \ell_{t \+ \eta}$.

Pick seven general distinct points $x_1,\ldots, x_7$ on $T$.
For any $t \in E$, set $x_9^{t}:=\Phi(t)$ and let $x_8^{t}$ be the unique point on $T$ such that $x_1+x_2+\cdots+x_7+x_8^{t}+x_9^{t} \in |\N_{T/R} \* \N_{T/\PP^2}|$.  Let
$\widetilde{P}_{t}:=\Bl_{x_2,\ldots,x_7,x_8^{t}}\PP^2$,
$\widetilde{R}_{t}:=\Bl_{x_1,x_9^{t}}R$ and $X_{t}=\widetilde{R}_{t} \cup_T \widetilde{P}_{t}$. Then there is a Zariski-open dense subset $E^{\circ}$ of $E$ such that $\{X_{t}\}_{t \in E^{\circ}}$ is a flat family  of  surfaces in $\D$, with $X_{t}=X_{t\+\eta}$, since $x_9^{t}=\Phi(t)=\Phi(t\+\eta)$. Let  $X=X_{t_0}$ for some $t_0 \in E^{\circ}$, with $x_8^{t_0}=x_8$ and $x_9^{t_0}=x_9$.
As $t_0$ deforms to $t_0 \+ \eta$, the surface $X=X_{t_0}$ deforms nontrivially back to itself. The divisor $E_9$ is on
$X$ represented by the pair $(\f_{t_0}+\e_9, \ell_{t_0})$. As $t_0$ deforms to $t_0 \+ \eta$, this will deform to $(\f_{t_0\+\eta}+\e_9, \ell_{t_0\+\eta}) \sim E_9+K_X$. 

Since $a_0$ is even, $a_0E_{9,10}$ will deform back to itself. All other divisors
present in the decomposition \eqref{dec:immersione} are independent of $x_8$ and $x_9$, thus remain invariant. It follows that $L$ deforms to $L+K_X$ as $t_0$ deforms to $t_0 \+ \eta$.

{\bf Case (iii).} 
By \eqref{eq:ijk} we have $E_{9,10}+E_{10} \sim E_{1,9}+E_{1}$, so that we may write
\begin{equation}
  \label{eq:nuovoII}
    L \sim (a_0-1)E_{9,10}+(a_1+1)E_1+a_2E_2+\cdots+a_7E_7+a_9E_9+(a_{10}-1)E_{10}+E_{1,9}.
\end{equation}
We note that no isotropic divisor present in this decomposition depends on $x_8$ and that the only ones depending on $x_1$ are $E_1$ and $E_{10}$, the first occurring with odd coefficient and the latter occurring with even coefficient in \eqref{eq:nuovoII}.

We now argue as in case (i), with $x_9$ replaced by $x_1$, and obtain a (nontrivial) deformation of $X$ back to itself in such a way that $E_1=(\s_{t_0}-\e_1,0)$ deforms to $E_1+K_X=(\s_{t_0\+\eta}-\e_1,0)$. Since $E_{10}$ occurs with even coefficients in the decomposition  \eqref{eq:nuovoII} and all other isotropic divisors in the decomposition  are independent of $x_1$ and $x_8$,  we see that $L$ deforms to $L+K_X$.

{\bf Case (iv).} 
We argue as in  case (ii), with $x_9$ replaced by $x_i$, and obtain a (nontrivial) deformation of $X$ back to itself in such a way that $E_i=(\f_{t_0},\ell-\e_i)$ deforms to $E_i+K_X$, and since all other isotropic divisors in the decomposition \eqref{dec:immersione} are independent of both $x_i$ and $x_8$, we see that $L$ deforms to $L+K_X$. 
\end{proof}

\section{Proof of Theorem \ref{mainthm1}} \label{sec:proof}

We will need the following result:

\begin{lemma} \label{lemma:sid}
   Let $L$ be any effective line bundle on an Enriques surface $S$ such that $L^2>0$, $\phi(L) \geq 3$ and $L$ is not numerically $2$-divisible. Then there is an isotropic $10$-sequence $\{E_1,\ldots,E_{10}\}$ on $S$  such that
   \[
     L \sim a_1E_1+\cdots +a_7E_7+a_9E_9+a_{10}E_{10}+a_0E_{9,10},
\]
   where 
   $a_0,a_1,\ldots,a_{10}$ are nonnegative integers satisfying conditions \eqref{eq:condcoff2}-\eqref{eq:condcoffJ3} (in Proposition \ref{prop:immersione}) and such that one of the conditions (i)-(iv) in Proposition \ref{prop:main} holds. 
\end{lemma}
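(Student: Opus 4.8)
The plan is to begin from the canonical simple isotropic decomposition furnished by Proposition \ref{prop:sid}, observe that the inequalities \eqref{eq:condcoff2}--\eqref{eq:condcoffJ3} (the hypotheses of Proposition \ref{prop:immersione}) hold automatically once $L^2>0$ and $\phi(L)\geq 3$, and then, by a parity analysis combined with a relabelling of the isotropic $10$-sequence, achieve in addition one of conditions (i)--(iv) of Proposition \ref{prop:main}.

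First, Proposition \ref{prop:sid} provides an isotropic $10$-sequence $\{E_1,\ldots,E_{10}\}$ and a decomposition $L \sim a_1E_1+\cdots+a_7E_7+a_9E_9+a_{10}E_{10}+a_0E_{9,10}+\varepsilon_L K_S$ with $a_1\geq\cdots\geq a_7$, $a_9+a_{10}\geq a_0\geq a_9\geq a_{10}$ and all $a_i\geq 0$. Since $L$ is not numerically $2$-divisible, $L+K_S$ is not $2$-divisible in $\Pic S$, so $\varepsilon_L=0$; moreover the $a_i$ cannot all be even, otherwise $L$ would be $2$-divisible. Conditions \eqref{eq:condcoff2} and \eqref{eq:condcoffJ2} are immediate from $a_9\geq a_{10}$ and $a_1\geq a_2\geq\cdots\geq a_7$. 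For the remaining two, one uses that $E_1$ is a nonzero effective isotropic divisor, so by \eqref{eq:defphi}
\[ a_0+a_2+a_3+\cdots+a_7+a_9+a_{10}=E_1\cdot L\geq\phi(L)\geq 3; \]
the left-hand side being $\geq 3$ is exactly \eqref{eq:condcoffJ3}, and it also forces $a_0+\min\{a_1,a_2\}=a_0+a_2>0$ (if $a_0+a_2=0$ then $a_0=a_2=\cdots=a_7=a_9=a_{10}=0$ and $E_1\cdot L=0$, absurd), which is \eqref{eq:condcoffJ1}. This is the only point where $\phi(L)\geq 3$ enters, and it is genuinely needed, since \eqref{eq:condcoffJ3} can fail when $\phi(L)\leq 2$.

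It remains to arrange one of (i)--(iv); for this it suffices to relabel $\{E_1,\ldots,E_{10}\}$, because interchanging $E_1$ with $E_2$, or $E_9$ with $E_{10}$, leaves the divisor $E_{9,10}\sim\frac13(E_1+\cdots+E_{10})-E_9-E_{10}$ (and the absence of an $E_8$-term) unchanged. If some $a_i$ with $2\leq i\leq 7$ is odd, (iv) holds. If $a_1$ is odd (hence $a_2,\ldots,a_7$ even), swap $E_1$ and $E_2$: the quantities $\min\{a_1,a_2\}$ and $a_0,a_3,\ldots,a_7,a_9,a_{10}$ are unchanged and the left-hand side of \eqref{eq:condcoffJ3} stays equal to the original $E_1\cdot L\geq 3$, so \eqref{eq:condcoff2}--\eqref{eq:condcoffJ3} persist while the coefficient in position $2$ is now odd, giving (iv). Finally, if $a_1,\ldots,a_7$ are all even, some $a_j$ with $j\in\{0,9,10\}$ is odd; running through the parities of $(a_0,a_9,a_{10})$ — recalling that $a_0=0$ forces $a_9=a_{10}=0$, so an odd $a_9$ or $a_{10}$ forces $a_0>0$ — one finds that either (i), (ii) or (iii) already holds, or else $(a_0,a_9,a_{10})$ has parity $(\mathrm{odd},\mathrm{odd},\mathrm{even})$ or $(\mathrm{even},\mathrm{even},\mathrm{odd})$; in these two cases, swapping $E_9$ and $E_{10}$ (which affects only \eqref{eq:condcoff2}, still valid because $a_9+a_{10}\geq a_0\geq a_9\geq a_{10}$) produces case (i), respectively (ii).

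The argument is thus essentially bookkeeping. The only point that requires care is to verify that each relabelling preserves all of \eqref{eq:condcoff2}--\eqref{eq:condcoffJ3}, and in particular never violates \eqref{eq:condcoffJ3}, which each time reduces to the inequality $\phi(L)\geq 3$ applied to the appropriate member of the isotropic $10$-sequence.
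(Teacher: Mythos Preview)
Your proof is correct and follows essentially the same approach as the paper's: start from the fundamental presentation of Proposition~\ref{prop:sid}, use $\phi(L)\geq 3$ (via $E_1\cdot L\geq 3$) to verify \eqref{eq:condcoffJ1} and \eqref{eq:condcoffJ3}, then do a parity case analysis with swaps of $E_1\leftrightarrow E_2$ or $E_9\leftrightarrow E_{10}$ to land in one of (i)--(iv). The only cosmetic differences are that the paper invokes Remark~\ref{rem:eps} rather than arguing $\varepsilon_L=0$ directly from the definition, and organizes the parity cases by first splitting on whether $a_0>0$.
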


  \begin{proof}
    Write $L$ as in \eqref{eq:scrivoL} satisfying the conditions of Proposition \ref{prop:sid}. Thus, \eqref{eq:condcoff2} and \eqref{eq:condcoffJ2} hold.
    Moreover, if $a_0=0$, then also $a_9=a_{10}=0$ by \eqref{eq:condcoff'}, whence $a_1,a_2>0$ since $L^2>0$. It follows that also \eqref{eq:condcoffJ1} holds.
    Finally, since $\phi(L) \geq 3$, we must have $E_i \cdot L \geq 3$ for
    $i \in \{1,2\}$, yielding \eqref{eq:condcoffJ3}.
Note that conditions \eqref{eq:condcoff2}-\eqref{eq:condcoffJ3} are symmetric with respect to interchanging $a_9$ and $a_{10}$, as well as $a_1$ and $a_2$.

    By \eqref{eq:defeps} and Remark \ref{rem:eps}, at least one of the coefficients $a_i$ is odd.

    Assume  that $a_0>0$.
    If $a_0$ and $a_9$ have different parities, we are in case (i) or (ii). Similarly, if $a_0$ and $a_{10}$ have different parities, we may interchange $E_9$ and $E_{10}$ and end up in case (i) or (ii). If $a_{i}>0$ is odd for some $i \in \{1,\ldots,7\}$, we end up in case (iv), possibly after 
    interchanging $E_1$ and $E_2$. The latter is the case if all $a_0,a_9,a_{10}$ are even. Left is the case where $a_0,a_9,a_{10}$ are odd and $a_1,\ldots,a_7$ are even, which yields case (iii).

    Assume  that $a_0=0$. Since $a_{i}>0$ is odd for some $i \in \{1,\ldots,7\}$, we end up in case (iv) possibly after interchanging $E_1$ and $E_2$. 
\end{proof}

\begin{proof}[Proof of Theorem \ref{mainthm1}]
  Assume that $\widehat{\E}'$ is an irreducible component of $\widehat{\E}_{g}$ parametrizing pairs $(S,[H])$ with $[H]$ $2$-divisible. Then by \cite[Lemma 4.8]{cdgk} the pairs $(S,H)$ and $(S,H+K_S)$ lie in different irreducible components of $\E_{g}$. Hence, $\rho_g^{-1}(\widehat{\E}')$ is reducible.

  Assume on the contrary that $\widehat{\E}'$ parametrizes
   numerically polarized surfaces with  classes that are not numerically $2$-divisible.  By \cite[Prop. 4.16]{cdgk} the component $\widehat{\E}'$ consists precisely of pairs admitting the same simple  decomposition type. Let $\phi$ be the $\phi$-value of the members of $\widehat{\E}'$. Assume first that $\phi \geq 3$. Then by  Lemma \ref{lemma:sid} the members of $\widehat{\E}'$ 
admit  the same simple  decomposition type (modulo $\xi$) as $(X,L)$, with $X$ in $\D$, 
  and $L  \sim a_1E_1+\cdots +a_7E_7+a_9E_9+a_{10}E_{10}+a_0E_{9,10}$ in $\Pic X$ satisfying the conditions in Propositions \ref{prop:immersione} and \ref{prop:main}.
By Proposition \ref{prop:immersione}, the line bundles $L$ and $L+K_X$ define morphisms $\varphi_L:X \to \PP^{g-1}$ and $\varphi_{L+K_X}:X \to \PP^{g-1}$, respectively, that are isomorphisms except for contractions of $(-1)$-curves on either component of $X$.
By Proposition \ref{prop:main}, the surfaces $\varphi_L(X)$ and $\varphi_{L+K_X}(X)$ lie in the same irreducible component of the Hilbert scheme $\mathfrak{H}$, and they are both smooth points of $\mathfrak{H}$ by Theorem \ref{thm:hilbert}.  

  Let $\pi:\mathfrak{X} \to \DD$ be the one-parameter family with parameter $t$ over the disc $\DD$
  of Corollary \ref{cor:piclimite}, with special fiber $\pi^{-1}(0)=X$ and general fiber a smooth Enriques surface $S_t=\pi^{-1}(t)$. Let $\iota_t: S_t \subset \mathfrak{X}$ be the inclusion. Using Corollary \ref{cor:piclimite} and the notation therein, and setting
  $E_i^{(t)}:=\iota_t^*E_i \in \Pic S_t$
and, similarly, $E_{9,10}^{(t)}:=\iota_t^*E_{9,10}$,
we get
\[ L_t:=\iota_t^*L \sim a_1E_1^{(t)}+\cdots +a_7E_7^{(t)}+a_9E_9^{(t)}+a_{10}E_{10}^{(t)}+a_0E_{9,10}^{(t)}.\]
In particular, $(S_t,L_t)$ admits the same  simple decomposition type as $(X,L)$. Moreover, $\varphi_{L_t}(S_t)$ also lies in $\mathfrak{H}$. Since $\iota_t^*K_X =K_{S_t}$, we have
\[ \iota_t^*(L+K_X) \sim a_1E_1^{(t)}+\cdots +a_7E_7^{(t)}+a_9E_9^{(t)}+a_{10}E_{10}^{(t)}+a_0E_{9,10}^{(t)}+K_{S_t} \sim L_t+K_{S_t},\]
and $\varphi_{L_t+K_{S_t}}(S_t)$ also lies in $\mathfrak{H}$. Thus, 
$(S_t,L_t)$ and $(S_t,L_t+K_{S_t})$ belong to the same irreducible component of $\E_{g}$.  By construction,  $\rho_g([S_t,L_t])=\rho_g([S_t,L_t+K_{S_t}])$ lies in $\widehat{\E}'$. Thus 
$\rho_g^{-1}(\widehat{\E}')$, containing both $(S_t,L_t)$ and $(S_t,L_t+K_{S_t})$,  is 
irreducible.

If  $\phi \leq 2$,  then one can  repeat the same reasoning substituting the  pairs $(S,[H])$ parametrized by $\widehat{\E}'$ with $(S,3[H])$.  Since Theorem \ref{mainthm1} in these cases follows from \cite[Cor. 1.3]{cdgk} anyway, we leave the details to the reader.
 \end{proof}

As a consequence of Theorem \ref{mainthm1}, we get a positive answer to \cite[Question 4.17]{cdgk}:

\begin{thm} \label{mainthm2}
  The irreducible components of $\E_{g}$ are precisely the loci parametrizing pairs admitting the same simple  decomposition type. 
\end{thm}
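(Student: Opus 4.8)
The plan is to obtain this as a fairly formal consequence of Theorem~\ref{mainthm1} together with the already-known analogue for numerically polarized surfaces, \cite[Prop.~4.16]{cdgk}. Recall that by \cite[Cor.~4.6]{cdgk} every polarized, or numerically polarized, Enriques surface admits a simple isotropic decomposition, and that by \cite[Prop.~4.15]{cdgk} ``admitting the same simple decomposition type'' is an equivalence relation, with finitely many classes, on $\E_g$ and on $\widehat\E_g$. Whether a numerical class admits a decomposition of a prescribed type is a purely lattice-theoretic condition (an isotropic numerical class on an Enriques surface being effective up to sign and up to $K_S$); since $\Num S$ and the class of the polarization are locally constant in flat families, each of these equivalence classes is open and closed in the respective moduli space. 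By \cite[Prop.~4.16]{cdgk} the equivalence classes in $\widehat\E_g$ are exactly its irreducible components. Finally, since $\rho_g$ only replaces $L$ by $L+K_S$, the preimage $\rho_g^{-1}(\widehat\E')$ of any component $\widehat\E'$ of $\widehat\E_g$ is a union of whole equivalence classes of $\E_g$, and, over the various $\widehat\E'$, these clopen preimages partition $\E_g$. It therefore suffices to understand each $\rho_g^{-1}(\widehat\E')$.

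So I would fix a component $\widehat\E'$ of $\widehat\E_g$ and determine the equivalence classes of $\E_g$ lying over it. Using \cite[Cor.~4.6]{cdgk} and Remark~\ref{rem:eps}, there is exactly one such class, necessarily with $\varepsilon_L=0$, when $\widehat\E'$ parametrizes classes that are not numerically $2$-divisible (a decomposition then has an odd coefficient, which forces $\varepsilon=0$); and there are exactly two, distinguished by $\varepsilon_L=0$ and $\varepsilon_L=1$ according to which of $L$, $L+K_S$ is $2$-divisible in $\Pic S$, when $\widehat\E'$ parametrizes numerically $2$-divisible classes. (In the latter case $(S,L)$ and $(S,L+K_S)$ admit disjoint sets of decomposition types, so lie in different classes; this dichotomy is recorded in Remark~\ref{rem:eps} and \cite[Lemma~4.8]{cdgk}, and in the discussion preceding Theorem~\ref{mainthm1}.)

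Now I would apply Theorem~\ref{mainthm1}. If $\widehat\E'$ is of the non-$2$-divisible type, then $\rho_g^{-1}(\widehat\E')$ is irreducible, hence (being a disjoint union of clopen equivalence classes) equals the unique equivalence class of $\E_g$ over it; being clopen and irreducible, that class is an irreducible component of $\E_g$. If $\widehat\E'$ is of the $2$-divisible type, then $\rho_g^{-1}(\widehat\E')$ is reducible, consisting of two disjoint irreducible components; the two equivalence classes of $\E_g$ over $\widehat\E'$ are nonempty, pairwise disjoint, closed in $\rho_g^{-1}(\widehat\E')$ and cover it, so each must coincide with one of these two components. In either case every equivalence class of $\E_g$ is an irreducible component; and conversely any irreducible component of $\E_g$, lying in one of the clopen pieces $\rho_g^{-1}(\widehat\E')$ and hence being one of the at most two equivalence classes into which that piece splits, is itself such a class. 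This proves that the irreducible components of $\E_g$ are precisely the loci parametrizing pairs admitting the same simple decomposition type.

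I do not expect a genuine obstacle here: the substance is entirely contained in Theorem~\ref{mainthm1} and in its numerically polarized counterpart \cite[Prop.~4.16]{cdgk}. The only points requiring a little care are the remark that each equivalence class is clopen --- so that irreducibility alone already makes it a component --- and the bookkeeping, via Remark~\ref{rem:eps} and \cite[Lemma~4.8]{cdgk}, of how many equivalence classes of $\E_g$ sit over a given component of $\widehat\E_g$.
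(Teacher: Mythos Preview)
Your proposal is correct and follows essentially the same approach as the paper: both combine Theorem~\ref{mainthm1} with \cite[Prop.~4.16]{cdgk}, split into the $2$-divisible and non-$2$-divisible cases, and use \cite[Lemma~4.8]{cdgk} and Remark~\ref{rem:eps} to count the equivalence classes sitting over each component of $\widehat\E_g$. Your explicit clopen argument is a mild elaboration of what the paper leaves implicit, but the logical structure is the same.
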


\begin{proof}
  By \cite[Prop. 4.16]{cdgk} the irreducible components of $\widehat{\E}_{g}$ are precisely the loci para-\linebreak metrizing pairs admitting the same simple  decomposition type modulo the canonical bundle. By Theorem \ref{mainthm1} the map $\rho_g$ gives a one-to-one correspondence between the irreducible components of $\E_{g}$ and $\widehat{\E}_{g}$ parametrizing pairs with line bundles that  are not numerically $2$-divisible. Moreover, in this case pairs of the form $(S,L)$ and $(S,L+K_S)$ admit the same simple  decomposition type by \cite[Cor. 4 and Rem. 4.11]{cdgk}: more precisely, this follows as one may always find a simple isotropic decomposition 
$L \sim \sum_{i=1}^n a_iE_i+\varepsilon K_S$ with at least one odd coefficient $a_{i_0}$ 
(by \cite[Lemma 4.8]{cdgk}),  and then setting $E'_{i_0}:=E_{i_0}+K_S$ one reaches a simple isotropic decomposition $L+K_S \sim \sum_{i=1, i \neq i_0}^n  a_iE_i+a_{i_0}E'_{i_0}+\varepsilon K_S$, which is of the same type as the one for $L$.
The theorem is therefore proved for components of $\E_{g}$ parametrizing classes that are not numerically $2$-divisible.

On the other hand, if $\E'$ is an irreducible component of $\E_{g}$ parametrizing classes $(S,L)$ that are numerically $2$-divisible, then $\rho_g^{-1}\rho_g(\E')$ consists of two irreducible components by Theorem \ref{mainthm1}, containing $(S,L)$ and $(S,L+K_S)$, respectively. Since the coefficients in any simple isotropic decomposition of $(S,L)$ and of $(S,L+K_S)$ are even by \cite[Lemma 4.8]{cdgk}, we see that $(S,L)$ and $(S,L+K_S)$ do not admit the same simple  decomposition type by \cite[Cor. 4 and Rem. 4.11]{cdgk}: indeed, assuming for instance $L=2M$ in $\Pic S$, then $L$ always has simple isotropic decompositions with $\varepsilon=0$, whereas  $L+K_S$ always has simple isotropic decompositions with $\varepsilon=1$. This proves the theorem for components of $\E_{g}$ parametrizing classes that are  numerically $2$-divisible.
 \end{proof}

\section{The $\phi$-vector and proofs of Theorems \ref{thm:phivector1} and \ref{mainthm3}} \label{sec:phi}

Recalling Proposition \ref{prop:sid}, we make the following:

\begin{definition} \label{def:fund}
Let $L$ be any effective line bundle on an Enriques surface $S$ such that $L^2>0$. A decomposition of the form \eqref{eq:scrivoL} with coefficients satisfying  \eqref{eq:defeps}, \eqref{eq:condcoff} and \eqref{eq:condcoff'}  is called a {\em fundamental presentation of $L$}. 
  \end{definition}

In the next two lemmas we  deduce some properties of isotropic $10$-sequences satisfying the conditions of Proposition \ref{prop:sid}, that is, appearing in fundamental presentations.

\begin{lemma} \label{lemma:intminime1}
  Let $\{E_1,\ldots,E_{10}\}$ be any isotropic $10$-sequence satisfying the conditions of Proposition \ref{prop:sid} and $F$ be any effective primitive isotropic divisor such that $F \not \equiv E_i$ for all $i \in \{1,\ldots,9\}$ and $F \not \equiv E_{9,10}$. Then
  \[ \phi(L)=E_1 \cdot L \leq E_2 \cdot L \leq \cdots \leq E_8 \cdot L \leq \min\{E_9 \cdot L, E_{9,10}\cdot L\} \leq   E_9 \cdot L \leq F \cdot L.\]
\end{lemma}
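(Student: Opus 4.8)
The plan is to split the statement into a purely arithmetic chain of inequalities among the numbers $E_i\cdot L$ and $E_{9,10}\cdot L$, which does not involve $F$ at all, and the single genuinely geometric inequality $E_9\cdot L\le F\cdot L$; the normalisation $\phi(L)=E_1\cdot L$ then follows formally. For the arithmetic part I would write $L$ as in \eqref{eq:scrivoL}, set $a_8:=0$, and compute, using $E_i\cdot E_j=1$ for $i\ne j$, the intersection numbers with $E_{9,10}$ from \eqref{eq:10-3} (namely $E_k\cdot E_{9,10}=1$ for $k\le 8$, $E_9\cdot E_{9,10}=E_{10}\cdot E_{9,10}=2$, $E_{9,10}^2=0$) and $K_S\cdot(-)=0$, that $E_i\cdot L=C-a_i$ for $1\le i\le 8$, where $C:=a_1+\cdots+a_7+a_0+a_9+a_{10}$, while $E_9\cdot L=C+a_0-a_9$, $E_{10}\cdot L=C+a_0-a_{10}$ and $E_{9,10}\cdot L=C+(a_9+a_{10}-a_0)$. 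Then $a_1\ge\cdots\ge a_7\ge a_8=0$ gives $E_1\cdot L\le\cdots\le E_8\cdot L$, while $a_0\ge a_9$ and $a_9+a_{10}\ge a_0$ (both part of \eqref{eq:condcoff'}) give $E_8\cdot L\le\min\{E_9\cdot L,E_{9,10}\cdot L\}\le E_9\cdot L$. Finally, granting $E_9\cdot L\le F\cdot L$, the equality $\phi(L)=E_1\cdot L$ is immediate: the minimum defining $\phi(L)$ is attained at an effective primitive isotropic divisor, and any such divisor is numerically one of $E_1,\dots,E_{10},E_{9,10}$ — each having intersection with $L$ at least $E_1\cdot L$, using $a_0\ge a_{10}$ and $a_9+a_{10}\ge a_0$ — or else is an $F$ as in the statement, with $F\cdot L\ge E_9\cdot L\ge E_1\cdot L$.

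For the inequality $E_9\cdot L\le F\cdot L$ I would first dispose of the case $F\equiv E_{10}$, where it reads $a_9\ge a_{10}$, and then assume $F\not\equiv E_i$ for all $i\in\{1,\dots,10\}$ and $F\not\equiv E_{9,10}$. Put $m_i:=F\cdot E_i$ and $n:=F\cdot E_{9,10}$. Since $F$, each $E_i$ and $E_{9,10}$ are effective and isotropic, hence lie in the closure of the positive cone of $\Num S$, the Hodge index theorem gives $m_i\ge 0$ and $n\ge 0$, with equality forcing $F$ to be proportional to the relevant class; primitivity and effectivity then force numerical equivalence, which is excluded, so $m_i\ge 1$ for all $i$ and $n\ge 1$. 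A direct expansion using the intersection numbers above yields
\begin{equation*}
 F\cdot L-E_9\cdot L=\sum_{i=1}^{7}a_i(m_i-1)+a_9m_9+a_{10}(m_{10}-1)+a_0(n-2).
\end{equation*}
If $n\ge 2$ all four summands are nonnegative. If $n=1$, then intersecting $3E_{9,10}\sim E_1+\cdots+E_8-2E_9-2E_{10}$ (Lemma \ref{lemma:ceraprima}) with $F$ gives $3=m_1+\cdots+m_8-2m_9-2m_{10}$, which together with $m_i\ge 1$ forces $m_9+m_{10}\ge 3$; then if $m_{10}\ge 2$ one has $F\cdot L-E_9\cdot L\ge a_9m_9+a_{10}(m_{10}-1)-a_0\ge a_9+a_{10}-a_0\ge 0$, and if $m_{10}=1$ then $m_9\ge 2$ and $F\cdot L-E_9\cdot L\ge a_9m_9-a_0\ge 2a_9-a_0\ge 0$, the last step because $a_0\le a_9+a_{10}\le 2a_9$ by \eqref{eq:condcoff'}.

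I expect the only delicate point to be the case $n=F\cdot E_{9,10}=1$: here the crude bound $a_0(n-2)\ge-a_0$ does not suffice, and one has to extract the extra inequality $m_9+m_{10}\ge 3$ from the cubic relation $3E_{9,10}\sim E_1+\cdots+E_8-2E_9-2E_{10}$ and combine it with the coefficient inequalities $a_0\le a_9+a_{10}$ and $a_0\le 2a_9$ coming from \eqref{eq:condcoff'}. All remaining computations are routine bookkeeping.
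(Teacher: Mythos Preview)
Your proof is correct and follows essentially the same approach as the paper: compute $E_i\cdot L$ and $E_{9,10}\cdot L$ explicitly in terms of the $a_i$, use that distinct effective primitive isotropic classes intersect positively, and exploit the relation $E_1+\cdots+E_{10}\sim 3(E_9+E_{10}+E_{9,10})$ to handle the case where $F\cdot E_{9,10}$ is small. The only cosmetic difference is the case split: the paper first deduces $F\cdot(E_9+E_{10}+E_{9,10})\ge 4$ and then branches on which of the three intersections is $\ge 2$, whereas you branch on $n=F\cdot E_{9,10}\ge 2$ versus $n=1$ and then on $m_{10}$; the underlying inequalities ($a_9+a_{10}\ge a_0$, $2a_9\ge a_0$) are identical.
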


\begin{proof}
  Set $a_8:=0$ and $a:=\sum_{i=0}^{10}a_i$. Since $a_1 \geq \cdots \geq a_7 \geq a_8=0$, and $E_i \cdot L=a-a_i$ for all $i \in \{1,\ldots,8\}$, we get $E_1 \cdot L \leq \cdots \leq E_8 \cdot L=a$. Moreover, $E_9 \cdot L=a+a_0-a_9 \geq a$ since
  $a_0 \geq a_9$ and $E_{9,10} \cdot L=a+a_9+a_{10}-a_0 \geq a$ since
  $a_9+a_{10} \geq a_0$.

  Let now $F$ be an effective primitive isotropic divisor such that $F \not \equiv E_i$ for all $i \in \{1,\ldots,9\}$ and $F \not \equiv E_{9,10}$.

If $F \equiv E_{10}$, then $F \cdot L=a+a_0-a_{10} \geq a+a_0-a_9=E_9 \cdot L$ since
$a_{9} \geq a_{10}$.

If $F \not \equiv E_{10}$, then $F \cdot E_i>0$ for all $i \in \{1,\ldots,10\}$ (cf. \cite[Lemma 2.1]{klvan}), whence $F \cdot (E_1+\cdots+E_{10}) \geq 10$. But $E_1+\cdots+E_{10} \sim 3D$, with $D \sim E_9+E_{10}+E_{9,10}$ (cf. Lemma \ref{lemma:ceraprima}), whence $F \cdot (E_9+E_{10}+E_{9,10}) \geq 4$. Hence,
$F \cdot E_9 \geq 2$, or $F \cdot E_{10} \geq 2$, or $F \cdot E_{9,10} \geq 2$.

If $F \cdot E_9 \geq 2$, then $F \cdot L \geq a+a_9 \geq a+a_0-a_{10} \geq a+a_0-a_9=E_9 \cdot L$, using the facts that $a_9+a_{10}\geq a_0$ and $a_9 \geq a_{10}$
(and $F \cdot E_{9,10} >0$ by \cite[Lemma 2.1]{klvan}).

If $F \cdot E_{10} \geq 2$, then similarly $F \cdot L \geq a+a_{10} \geq a+a_0-a_9=E_9 \cdot L$.

If $F \cdot E_{9,10} \geq 2$, then  $F \cdot L \geq a+a_0 \geq a+a_0-a_{9} =E_9 \cdot L$.

We have therefore proved that $F \cdot L \geq E_9 \cdot L$. It follows that $E_1 \cdot L=\phi(L)$.
 \end{proof}

 \begin{remark} \label{rem:intminime1}
 For $a:=\sum_{i=0}^{10}a_i$ the last proof yields  $E_8 \cdot L =a$. Hence, for any  effective primitive isotropic divisor such that $F \not \equiv E_i$ for all $i \in \{1,\ldots,8\}$, we get $F \cdot L \geq a$. 
 \end{remark}

 \begin{lemma} \label{lemma:intminime2}
   Let $\{E_1,\ldots,E_{10}\}$ be any isotropic $10$-sequence satisfying the conditions of Proposition \ref{prop:sid}. For any isotropic $10$-sequence
   $\{F_1,\ldots,F_{10}\}$ we have $(F_1+\cdots+F_{10}) \cdot L \geq (E_1+\cdots+E_{10}) \cdot L$. If equality holds and $F_1 \cdot L \leq F_2 \cdot L \leq \cdots \leq F_{10} \cdot L$, then there is no $n \in \{1,\ldots,9\}$ such that $E_i \cdot L = F_i \cdot L$ for all $i \in \{1,\ldots,n-1\}$ and $F_n \cdot L < E_n \cdot L$.
 \end{lemma}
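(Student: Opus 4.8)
\emph{Setup.} The plan is to replace the two isotropic $10$-sequences by the associated divisors of Lemma~\ref{lemma:ceraprima} and to compare their intersection numbers with $L$ via the fundamental presentation. Let $D$ and $D_F$ be the divisors with $3D\sim E_1+\cdots+E_{10}$ and $3D_F\sim F_1+\cdots+F_{10}$ furnished by Lemma~\ref{lemma:ceraprima}; thus $D^2=D_F^2=10$, $\phi(D)=\phi(D_F)=3$, and $D\sim E_i+E_j+E_{i,j}$ for all $i\neq j$. Then $(E_1+\cdots+E_{10})\cdot L=3\,D\cdot L$ and $(F_1+\cdots+F_{10})\cdot L=3\,D_F\cdot L$, so the first assertion is equivalent to $D_F\cdot L\geq D\cdot L$. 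Writing $L$ in the fundamental presentation \eqref{eq:scrivoL} and setting $a_8:=0$, $a:=\sum_{i=0}^{10}a_i$, Lemma~\ref{lemma:ceraprima} gives $D\cdot E_k=3$ for $k\in\{1,\dots,10\}$ and $D\cdot E_{9,10}=4$, hence $D\cdot L=3a+a_0$. Since $\phi(D_F)=3$, every effective isotropic divisor $E$ satisfies $D_F\cdot E\geq 3$. Finally, after sorting so that $F_1\cdot L\leq\cdots\leq F_{10}\cdot L$, Remark~\ref{rem:intminime1} shows that $E_1\cdot L\leq\cdots\leq E_8\cdot L=a$ are the eight smallest intersection numbers with $L$ attained by effective primitive isotropic classes, so $F_i\cdot L\geq E_i\cdot L$ for $i=1,\dots,8$.

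\emph{First inequality.} Expanding against the fundamental presentation,
\[
D_F\cdot L-D\cdot L=\sum_{k=1}^{7}a_k\bigl(D_F\cdot E_k-3\bigr)+a_9\bigl(D_F\cdot E_9-3\bigr)+a_{10}\bigl(D_F\cdot E_{10}-3\bigr)+a_0\bigl(D_F\cdot E_{9,10}-4\bigr).
\]
Every summand but the last is $\geq 0$, and the last one is $\geq 0$ unless $D_F\cdot E_{9,10}=3$; if $a_0=0$ we are done. Otherwise, expanding $3\,D_F\cdot D=D_F\cdot(E_1+\cdots+E_{10})$ against $D\sim E_9+E_{10}+E_{9,10}$ yields the identity
\[
\sum_{k=1}^{8}\bigl(D_F\cdot E_k-3\bigr)+3=2\bigl(D_F\cdot E_9-3\bigr)+2\bigl(D_F\cdot E_{10}-3\bigr)+3\bigl(D_F\cdot E_{9,10}-3\bigr),
\]
while the Hodge index theorem forces $D_F\cdot D\geq 10$, with equality only if $D_F\equiv D$. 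When $D_F\cdot E_{9,10}=3$ the identity gives $(D_F\cdot E_9-3)+(D_F\cdot E_{10}-3)\geq 2$; one then distinguishes whether both differences are positive or one of them is $\geq 2$, and, using $a_9+a_{10}\geq a_0\geq a_9\geq a_{10}$ together with the contributions of $a_1,\dots,a_7$ that the identity forces in the remaining subcase, one checks that the nonnegative terms compensate the deficit $a_0$. I expect the delicate point to be precisely this last subcase: one must rule out a $D_F$ whose intersection pattern with the $E$-sequence would force $D_F\cdot D$ down to its minimal value $10$ (hence $D_F\equiv D$ and $D_F\cdot E_{9,10}=4$, a contradiction), or would produce a $(-2)$-class meeting $E_9,E_{10},E_{9,10}$ with intersection numbers incompatible with effectivity.

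\emph{The lexicographic part.} Assume in addition $F_1\cdot L\leq\cdots\leq F_{10}\cdot L$. For $n\leq 8$ there is nothing to prove, since $F_n\cdot L\geq E_n\cdot L$. For $n=9$, suppose $F_i\cdot L=E_i\cdot L$ for $i=1,\dots,8$ and, for contradiction, $F_9\cdot L<E_9\cdot L$. Since $E_9\cdot L=a+a_0-a_9$ and $F_9\cdot L\geq F_8\cdot L=a$, we must have $a_0>a_9$, so $E_i\cdot L\leq a<E_9\cdot L$ for $i\leq 8$, whence $F_1\cdot L,\dots,F_9\cdot L$ are all $<E_9\cdot L$. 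By Lemma~\ref{lemma:intminime1} each of $F_1,\dots,F_9$ is then numerically equivalent to one of $E_1,\dots,E_8$ or to $E_{9,10}$ (it cannot be $E_9$), so these nine distinct classes exhaust $\{[E_1],\dots,[E_8],[E_{9,10}]\}$. As $\{F_1,\dots,F_{10}\}$ is an isotropic $10$-sequence, $F_{10}$ then meets each of $E_1,\dots,E_8$ and $E_{9,10}$ in exactly $1$; but computing $F_{10}\cdot(E_1+\cdots+E_{10})=3\,F_{10}\cdot D$ in two ways — directly, and via $D\sim E_9+E_{10}+E_{9,10}$ — gives $2\,(F_{10}\cdot E_9+F_{10}\cdot E_{10})=5$, which is impossible. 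This contradiction finishes the proof.
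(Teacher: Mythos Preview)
Your overall strategy---replacing the two $10$-sequences by the associated divisors $D$ and $D_F$ and expanding $D_F\cdot L-D\cdot L$ against the fundamental presentation---is genuinely different from the paper's proof, which proceeds by a lengthy case analysis on how many and which of the $F_i$ coincide with the $E_j$ or $E_{9,10}$. Your approach is more conceptual and, once completed, shorter; the lexicographic part in particular is correct and cleaner than the paper's (your parity contradiction $2(F_{10}\cdot E_9+F_{10}\cdot E_{10})=5$ replaces the paper's computation that $D'\cdot L>3a+a_0$).

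There is, however, a real gap in the \emph{first inequality}. You correctly reduce to the case $D_F\cdot E_{9,10}=3$ (so the deficit is $a_0$) and observe via your identity that $\alpha+\beta\geq 2$, where $\alpha:=D_F\cdot E_9-3$ and $\beta:=D_F\cdot E_{10}-3$. When $\alpha,\beta\geq 1$ you are done, since $a_9\alpha+a_{10}\beta\geq a_9+a_{10}\geq a_0$. But the subcase $\alpha=0$ (or symmetrically $\beta=0$) is not handled: your appeal to ``the contributions of $a_1,\dots,a_7$ that the identity forces'' fails, since all of $a_1,\dots,a_7$ may vanish (take e.g.\ $L\equiv a_0E_{9,10}+a_9E_9$ with $a_0=a_9$), and your subsequent remarks about Hodge index and $(-2)$-classes are not an argument. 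The clean fix is to observe that this subcase is \emph{impossible}: if $E$ is effective isotropic with $D_F\cdot E=3$, then $E\equiv F_i$ for some $i$ (otherwise $E\cdot F_i>0$ for all $i$ gives $3\,D_F\cdot E=E\cdot(F_1+\cdots+F_{10})\geq 10$, so $D_F\cdot E\geq 4$). Hence $D_F\cdot E_{9,10}=3$ forces $E_{9,10}\equiv F_i$; if in addition $\alpha=0$, i.e.\ $D_F\cdot E_9=3$, then $E_9\equiv F_j$ with $j\neq i$, contradicting $E_9\cdot E_{9,10}=2\neq 1=F_i\cdot F_j$. Thus $\alpha,\beta\geq 1$ automatically, and your argument closes.
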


 \begin{proof}
  As in the previous proof, set $a_8:=0$ and $a:=\sum_{i=0}^{10}a_i$.  Let $D:=\frac{1}{3}(E_1+\cdots+E_{10}) \sim E_9+E_{10}+E_{9,10}$. Then
  \[ D \cdot L = (E_9+E_{10}+E_{9,10}) \cdot L = (a+a_0-a_9)+(a+a_0-a_{10})+(a+a_9+a_{10}-a_0)=3a+a_0.\]
  Let  $\{F_1,\ldots,F_{10}\}$ be any isotropic $10$-sequence. Set $D':=\frac{1}{3}(F_1+\cdots+F_{10}) \sim F_i+F_j+F_{i,j}$. We will prove that $D' \cdot L \geq 3a+a_0$. We may and will assume that $D' \not \equiv D$.

  We will divide the treatment into the two cases
  \begin{itemize}
  \item[(I)] $E_{9,10} \equiv F_i$ for some $i \in \{1,\ldots,10\}$.
  \item[(II)] $E_{9,10} \not \equiv F_i$ for all $i \in \{1,\ldots,10\}$.
  \end{itemize}

  {\bf Case (I).} Assume without loss of generality that $E_{9,10} \equiv F_1$.
  We have $D' \sim F_1 + F_j+F_{1,j}$ for each $j \in \{2,\ldots,10\}$. Then
  $F_j \not \equiv E_i$ for $i  \in \{9,10\}$ (as $F_j \cdot E_{9,10}=F_j \cdot F_1=1$). Hence there must exist $j \in \{2,\ldots,10\}$ such that $F_j \not \equiv E_i$ for all $i \in \{1,\ldots,10\}$. Assume without loss of generality that $j=2$. Note that $F_{1,2}  \not \equiv E_i$ for all $i \in \{1,\ldots, 8\}$ (as
  $F_{1,2} \cdot E_{9,10}=F_{1,2} \cdot F_1=2$).
  We divide into the cases
  \begin{itemize}
  \item[(I-i)] $F_{1,2} \equiv E_{k}$ for $k=9$ or $10$.
      \item[(I-ii)] $F_{1,2} \not \equiv E_i$ for all $i \in \{1,\ldots, 10\}$.
   \end{itemize}

   {\bf Case (I-i).} We have $F_2 \cdot E_k=F_2 \cdot F_{1,2}=2$ for $k=9$ or $10$ and $F_2 \cdot E_i >0$ for all $i \in \{1,\ldots,10\}$, as $F_2 \not \equiv E_i$, by \cite[Lemma 2.1]{klvan}. Thus $F_2 \cdot L \geq a+a_k$, whence
   \begin{eqnarray*}
     D' \cdot L & = & (F_1 + F_2+F_{1,2}) \cdot L = E_{9,10}\cdot L+F_2 \cdot L + E_k \cdot L \\
     & \geq & (a+a_9+a_{10}-a_0)+(a+a_{k})+(a+a_0-a_k) = 3a+a_9+a_{10} \geq 3a+a_0.
   \end{eqnarray*}

   {\bf Case (I-ii).} We  have $F_{1,2} \cdot L \geq a+a_0$ (as $F_{1,2} \cdot E_{9,10}=F_{1,2} \cdot F_1=2$ and $F_{1,2} \cdot E_i >0$ for all $i \in \{1,\ldots,10\}$ by \cite[Lemma 2.1]{klvan}).
   We have $F_2 \cdot L \geq a$ by Remark \ref{rem:intminime1}.  Thus, 
   \begin{eqnarray*}
     D' \cdot L & = & (F_1 + F_2+F_{1,2}) \cdot L = E_{9,10}\cdot L+F_2 \cdot L + F_{1,2} \cdot L \\
     & \geq & (a+a_9+a_{10}-a_0)+a+(a+a_0) = 3a+a_9+a_{10} \geq 3a+a_0.
   \end{eqnarray*}

   {\bf Case (II).} We have $E_{9,10} \cdot F_i \geq 1$ for all $i$ by \cite[Lemma 2.1]{klvan}, whence
   $E_{9,10} \cdot (F_1+\cdots+F_{10}) \geq 12$, as $ F_1+\cdots+F_{10}\sim 3D'$.

   If $E_{9,10} \cdot F_i=1$ for all but one index $i=i_0$, then
   $E_{9,10} \cdot F_{i_0} \geq 3$. It follows that $(E_{9,10} + F_{i_0})^2 \geq 6$ and $2=\phi(E_{9,10} + F_{i_0})=F_i \cdot (E_{9,10} + F_{i_0})$ for all $i \neq i_0$, contradicting the fact that $\phi$ can be computed by at most three different effective isotropic numerical classes (cf. \cite[Rem. 4.19]{cdgk}). 
Hence there exist two different indices $i$ such that $E_{9,10} \cdot F_i \geq 2$, and we will without loss of generality assume $E_{9,10} \cdot F_1 \geq 2$ and $E_{9,10} \cdot F_{2} \geq 2$. It follows in particular that $F_1,F_{2} \not \equiv E_i$ for $i \in \{1,\ldots,8\}$.

   We will divide the rest of the treatment in the following cases:
   \begin{itemize}
   \item[(II-i)]  $\{F_1,F_{2}\}=\{E_9,E_{10}\}$ up to numerical equivalence.
   \item[(II-ii)] $F_1 \equiv E_9$, $F_{2} \not \equiv E_{10}$ (or vice versa).
\item[(II-iii)] $F_1,F_{2} \not \equiv E_9,E_{10}$ and $F_{1,2} \equiv E_{i_0}$ for some $i_0 \in \{1,\ldots,8\}$.
   \item[(II-iv)] $F_1,F_{2} \not \equiv E_9,E_{10}$ and  $F_{1,2} \not \equiv E_i$ for $i \in \{1,\ldots,8\}$.
\end{itemize}

{\bf Case (II-i).} Assume without loss of generality that $F_1 \equiv E_9$, $F_{2} \equiv E_{10}$. Then $F_{1,2} \not \equiv E_i$ for $i \in \{1,\ldots,10\}$,
and also $F_{1,2} \not \equiv E_{9,10}$, as $D \not \equiv D'$. Thus,
$F_{1,2} \cdot E_i>0$ for $i \in \{1,\ldots,10\}$ and $F_{1,2} \cdot E_{9,10}>0$ by \cite[Lemma 2.1]{klvan}, whence
$F_{1,2} \cdot L \geq a+a_9+a_{10}$, so that
\begin{eqnarray*}
     D' \cdot L & = & (F_1 + F_2+F_{1,2}) \cdot L = E_{9}\cdot L+E_{10} \cdot L + F_{1,2} \cdot L \\
     & \geq & (a+a_0-a_9)+(a+a_0-a_{10})+(a+a_9+a_{10}) = 3a+2a_0 \geq 3a+a_0.
   \end{eqnarray*}

   {\bf Case (II-ii).} Since $F_{2} \not \equiv E_i$ for all $i \in \{1,\ldots,10\}$, we have $F_2 \cdot E_i \geq 1$ for all $i$ by \cite[Lemma 2.1]{klvan}. Moreover, $F_2 \cdot L \geq a+a_0$ (as $F_2 \cdot E_{9,10} \geq 2$). Since $F_{1,2} \cdot E_9=F_{1,2} \cdot F_1=2$, we have $F_{1,2} \not \equiv E_i$ for all $i \in \{1,\ldots,10\}$. Thus 
$F_{1,2} \geq a$ by Remark \ref{rem:intminime1}.
Hence
   \begin{eqnarray*}
     D' \cdot L & = & (F_1 + F_2+F_{1,2}) \cdot L = E_{9}\cdot L+F_2 \cdot L + F_{1,2} \cdot L \\
     & \geq & (a+a_0-a_9)+(a+a_0)+a = 3a+2a_0-a_{9} \geq 3a+a_0.
   \end{eqnarray*}

{\bf Case (II-iii).}  Since $F_1 \not \equiv E_i$ for all $i \in \{1,\ldots,10\}$, and $F_1 \cdot E_{9,10} \geq 2$ and $F_1 \cdot E_{i_0} =F_1 \cdot F_{1,2} \geq 2$, we have $F_1 \cdot L \geq a+a_0+a_{i_0}$. Similarly, $F_2 \cdot L \geq a+a_0+a_{i_0}$. Hence 
\begin{eqnarray*}
     D' \cdot L & = & (F_1 + F_2+F_{1,2}) \cdot L = F_1\cdot L+F_2 \cdot L + E_{i_0} \cdot L \\
     & \geq & (a+a_0+a_{i_0})+ (a+a_0+a_{i_0})+(a-a_{i_0}) = 3a+2a_0+a_{i_0} \geq 3a+a_0.
\end{eqnarray*}

   {\bf Case (II-iv).} Since $F_1 \not \equiv E_i$ for all $i \in \{1,\ldots,10\}$, and $F_1 \cdot E_{9,10} \geq 2$, we have $F_1 \cdot L \geq a+a_0$. Similarly, $F_2 \cdot L \geq a+a_0$. Since $F_{1,2} \not \equiv E_i$ for $i \in \{1,\ldots,8\}$, we have $F_{1,2} \geq a$ by Remark \ref{rem:intminime1}. Hence,
   \[ D' \cdot L  =  (F_1 + F_2+F_{1,2}) \cdot L \geq  (a+a_0)+(a+a_0)+a = 3a+2a_0 \geq 3a+a_0.\]

   We have therefore proved the first part of the lemma, namely that
$(F_1+\cdots+F_{10}) \cdot L \geq (E_1+\cdots+E_{10}) \cdot L$ for any isotropic $10$-sequence
$\{F_1,\ldots,F_{10}\}$.

Assume  that $(F_1+\cdots+F_{10}) \cdot L = (E_1+\cdots+E_{10}) \cdot L$ and $F_1 \cdot L \leq F_2 \cdot L \leq \cdots \leq F_{10} \cdot L$. Arguing by contradiction, assume  that there is an $n \in \{1,\ldots,9\}$ such that $E_i \cdot L = F_i \cdot L$ for all $i \in \{1,\ldots,n-1\}$ and $F_n \cdot L < E_n \cdot L$. Since by Lemma \ref{lemma:intminime1}, the numbers $E_i \cdot L$ with $i \in \{1,\ldots,8\}$ are the eight lowest intersections of $L$ with effective nonzero isotropic divisors, we must have $n=9$. In this case, we would have $F_i \cdot L < E_9 \cdot L$ for all $i \in \{1,\ldots,9\}$. Lemma \ref{lemma:intminime1} therefore yields that $\{F_1,\ldots,F_9\}=\{E_1,\ldots E_8,E_{9,10}\}$ up to numerical equivalence and that $E_{9,10} \cdot L \geq F_i \cdot L$ for all $i \in \{1,\ldots,9\}$. We may therefore without loss of generality assume $E_{9,10} \equiv F_9$.

Consider as before $D' \sim F_9+F_{10}+F_{9,10}$. Since $(F_1+\cdots+F_{10}) \cdot L = (E_1+\cdots+E_{10}) \cdot L$, $F_i \cdot L=E_i \cdot L$ for all $i \in \{1,\ldots,8\}$ and $F_9 \cdot L < E_9 \cdot L$, we must have $F_{10} \cdot L > E_{10} \cdot L=a+a_0-a_{10}$. We have $F_{9,10} \not \equiv E_i$ for $i \in \{1,\ldots,8\}$ (as $F_{9,10} \cdot E_{9,10}=F_{9,10} \cdot F_9=2$). If $F_{9,10} \equiv E_k$ for
$k=9$ or $10$, we have $F_{9,10} \cdot L =a+a_0-a_k \geq a+a_0-a_9$; if 
$F_{9,10} \not \equiv E_k$ for
$k \in \{9,10\}$, we have $F_{9,10} \cdot E_i>0$ for all $i \in \{1,\ldots,10\}$ (by \cite[Lemma 2.1]{klvan}) and $F_{9,10} \cdot E_{9,10}=F_{9,10} \cdot F_9 = 2$, whence $F_{9,10} \cdot L \geq a+a_0$. Thus, in any event $F_{9,10} \cdot L \geq a+a_0-a_9$. Hence, we get 
\begin{eqnarray*}
     D' \cdot L & = & (F_9 + F_{10}+F_{9,10}) \cdot L = E_{9,10} \cdot L+F_{10} \cdot L + F_{9,10} \cdot L \\
                & > & (a+a_9+a_{10}-a_0)+ (a+a_0-a_{10})+(a+a_0-a_9) = 3a+a_0,
                      \end{eqnarray*}
a contradiction.
 \end{proof}

At this point we recall Definition \ref{def:phivector}  from the introduction.

\begin{proposition} \label{prop:phivector0}
  Any isotropic $10$-sequence as in Proposition \ref{prop:sid} computes $\underline{\phi}(L)$. In particular, the coefficients $a_i$ therein are unique and given by
  \begin{eqnarray*}
   a_i & = & \phi_8(L)-\phi_i(L), \; \; i \in \{1,\ldots,7\},\\
   a_i & = & \frac{1}{3}\sum_{i=1}^{10}\phi_i(L)-2\phi_8(L)-\phi_i(L), \; \; i \in \{9,10\},\\
   a_0 & = & \frac{1}{3}\sum_{i=1}^{10}\phi_i(L)-3\phi_8(L).                      
 \end{eqnarray*}
\end{proposition}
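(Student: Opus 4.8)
The plan is to show that any isotropic $10$-sequence $\{E_1,\ldots,E_{10}\}$ arising in a fundamental presentation \eqref{eq:scrivoL} of $L$ computes $\underline{\phi}(L)$, and then read off the formulas for the coefficients. First I would set $a_8:=0$ and $a:=\sum_{i=0}^{10}a_i$ as in the proofs of Lemmas \ref{lemma:intminime1} and \ref{lemma:intminime2}. By the computation already recorded in Lemma \ref{lemma:intminime1} (and Remark \ref{rem:intminime1}), one has $E_i\cdot L=a-a_i$ for $i\in\{1,\ldots,8\}$, hence $E_1\cdot L\le\cdots\le E_8\cdot L=a$, while $E_9\cdot L=a+a_0-a_9$ and $E_{10}\cdot L=a+a_0-a_{10}$, both $\ge a$. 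So the tuple $(E_1\cdot L,\ldots,E_{10}\cdot L)$ is already weakly increasing after possibly swapping $E_9$ and $E_{10}$ (to ensure $E_9\cdot L\le E_{10}\cdot L$, i.e. $a_9\ge a_{10}$, which is part of \eqref{eq:condcoff'}), so it is a legitimate candidate for $\underline{\phi}(L)$ in the sense of Definition \ref{def:phivector}.

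Next I would invoke Lemma \ref{lemma:intminime2} directly: for any isotropic $10$-sequence $\{F_1,\ldots,F_{10}\}$ with $F_1\cdot L\le\cdots\le F_{10}\cdot L$, the lemma gives $\sum_i F_i\cdot L\ge\sum_i E_i\cdot L$, and in case of equality there is no $n\in\{1,\ldots,9\}$ with $F_i\cdot L=E_i\cdot L$ for $i<n$ and $F_n\cdot L<E_n\cdot L$. By the definition of the order relation on $10$-tuples recalled before Definition \ref{def:phivector}, this says precisely that $(E_1\cdot L,\ldots,E_{10}\cdot L)$ is $\le(F_1\cdot L,\ldots,F_{10}\cdot L)$ in that order. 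Since this holds for every isotropic $10$-sequence, $(E_1\cdot L,\ldots,E_{10}\cdot L)$ is the minimal such tuple, i.e. it equals $\underline{\phi}(L)=(\phi_1(L),\ldots,\phi_{10}(L))$, and $\{E_1,\ldots,E_{10}\}$ computes $\underline{\phi}(L)$. In particular $\phi_i=a-a_i$ for $i\in\{1,\ldots,8\}$ and $\phi_i=a+a_0-a_i$ for $i\in\{9,10\}$.

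It remains to solve these linear relations for the $a_i$. From $\phi_8=a$ (recall $a_8=0$) we get $a_i=\phi_8-\phi_i$ for $i\in\{1,\ldots,7\}$. Summing $\phi_i=a-a_i$ over $i\in\{1,\ldots,8\}$ together with $\phi_9+\phi_{10}=2a+2a_0-a_9-a_{10}$ gives $\sum_{j=1}^{10}\phi_j=10a+2a_0-(a_9+a_{10})-\sum_{i=1}^{7}a_i$; substituting $a=\phi_8$, $a_i=\phi_8-\phi_i$ and simplifying expresses $a_0$ (and hence, via $\phi_i=a+a_0-a_i$, also $a_9,a_{10}$) in terms of the $\phi_j$, yielding $a_0=\frac13\sum_{j=1}^{10}\phi_j-3\phi_8$ and $a_i=\frac13\sum_{j=1}^{10}\phi_j-2\phi_8-\phi_i$ for $i\in\{9,10\}$; equivalently one just notes $a_0=a+2a_0-a_9-a_{10}+\cdots$ directly from $E_{9,10}\cdot L=a+a_9+a_{10}-a_0$ and the fact that $E_1+\cdots+E_{10}\sim 3D$ forces $\sum_i\phi_i=3(D\cdot L)=3(3a+a_0)$ by Lemma \ref{lemma:ceraprima}, so $a_0=\frac13\sum_i\phi_i-3a=\frac13\sum_i\phi_i-3\phi_8$. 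Uniqueness of the coefficients is then immediate since they are determined by the (intrinsic) $\phi_i$'s. The only point needing a word of care is the ordering convention: one must check that swapping $E_9,E_{10}$ when $a_9<a_{10}$ keeps the presentation fundamental (it does, by the symmetry of \eqref{eq:condcoff'}), so that without loss of generality $\phi_9\le\phi_{10}$; I expect this bookkeeping, rather than any genuine difficulty, to be the only subtle point, as the real content is entirely contained in Lemmas \ref{lemma:intminime1} and \ref{lemma:intminime2}.
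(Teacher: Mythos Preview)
Your proposal is correct and follows exactly the paper's approach: the paper's proof simply says the first statement is ``an immediate consequence of Lemma \ref{lemma:intminime2}'' and that the computation of the $a_i$ is ``straightforward,'' and you have filled in precisely those details. The only quibble is that your remarks about swapping $E_9$ and $E_{10}$ are unnecessary, since condition \eqref{eq:condcoff'} already forces $a_9\ge a_{10}$ (it is not symmetric in $a_9,a_{10}$), so the tuple is weakly increasing from the outset.
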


\begin{proof}
  The first statement is an immediate consequence of  Lemma \ref{lemma:intminime2}. The computation of the coefficients $a_i$ in terms of the $\phi_j(L)$ is straightforward.
\end{proof}

\begin{remark}
  Although the coefficients $a_i$ are unique, the isotropic $10$-sequence in Proposition \ref{prop:sid} is not unique, not even up to numerical equivalence or permutation, and nor is the presentation \eqref{eq:scrivoL}. Take for instance any isotropic $10$-sequence $\{E_1,\ldots,E_{10}\}$ and a decomposition
\[
     L \sim a_1E_1+\cdots +a_7E_7+a_9E_9+a_9E_{9,10},
\]
where  
$a_1\geq \cdots \geq a_7$ and $a_9$ are nonnegative integers. This satisfies the conditions of Proposition \ref{prop:sid}. On the other hand, since $E_9+E_{9,10} \sim E_8+E_{8,10}$, we may also write
\[ L \sim a_1E_1+\cdots +a_7E_7+a_9E_8+a_9E_{8,10},
\]
whence also the isotropic $10$-sequence $\{E_1,\ldots,E_7,E_9,E_8,E_{10}\}$ satisfies the desired conditions. This is a permutation of the previous isotropic $10$-sequence, but we can also construct a different isotropic $10$-sequence satisfying the conditions of Proposition \ref{prop:sid} that is not a permutation: indeed, set $E'_i:=E_i$ for $i \in \{1,\ldots,7\}$, $E'_8:=E_{8,10}$, $E'_9:=E_{9,10}$
and $E'_{10}:=E_{8,9}$. Then $\{E'_1,\ldots,E'_{10}\}$ is an isotropic $10$-sequence with $E'_{9,10}=E_9$, satisfying the conditions of Proposition \ref{prop:sid}, as
\[
     L \sim a_1E'_1+\cdots +a_7E'_7+a_9E'_9+a_9E'_{9,10}.
\]
\end{remark}

We may summarize Propositions \ref{prop:sid} and \ref{prop:phivector0} in:

\begin{thm} \label{thm:uniquefund}
Any effective line bundle $L$ on an Enriques surface $S$ satisfying $L^2>0$ admits a fundamental presentation. Moreover, the coefficients in any fundamental presentation are unique.
  \end{thm}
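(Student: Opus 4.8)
The plan is to deduce Theorem \ref{thm:uniquefund} directly from Propositions \ref{prop:sid} and \ref{prop:phivector0}, which together already contain all the substance. For the \emph{existence} of a fundamental presentation I would simply invoke Proposition \ref{prop:sid}: it produces an isotropic $10$-sequence $\{E_1,\ldots,E_{10}\}$ and nonnegative integers $a_0,a_1,\ldots,a_7,a_9,a_{10}$ realizing $L$ as in \eqref{eq:scrivoL}, with the coefficient inequalities \eqref{eq:condcoff} and \eqref{eq:condcoff'} satisfied; the exponent $\varepsilon_L$ appearing there is precisely the one dictated by \eqref{eq:defeps} (its compatibility with the parities of the $a_i$ being recorded in Remark \ref{rem:eps}). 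By Definition \ref{def:fund} this is exactly a fundamental presentation, so existence is immediate.

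For \emph{uniqueness} of the coefficients I would argue as follows. Let
\[ L \sim a_1E_1+\cdots+a_7E_7+a_9E_9+a_{10}E_{10}+a_0E_{9,10}+\varepsilon_L K_S \]
be \emph{any} fundamental presentation of $L$. By Definition \ref{def:fund}, the isotropic $10$-sequence $\{E_1,\ldots,E_{10}\}$ satisfies the conditions of Proposition \ref{prop:sid}, hence by Proposition \ref{prop:phivector0} it computes $\underline{\phi}(L)$; that is, $E_i\cdot L=\phi_i(L)$ for all $i\in\{1,\ldots,10\}$, where the vector $\underline{\phi}(L)=(\phi_1(L),\ldots,\phi_{10}(L))$ depends only on $L$ and not on the chosen presentation. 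Proposition \ref{prop:phivector0} then expresses each coefficient solely in terms of the $\phi_j(L)$, namely $a_i=\phi_8(L)-\phi_i(L)$ for $i\in\{1,\ldots,7\}$, $a_i=\tfrac13\sum_{j=1}^{10}\phi_j(L)-2\phi_8(L)-\phi_i(L)$ for $i\in\{9,10\}$, and $a_0=\tfrac13\sum_{j=1}^{10}\phi_j(L)-3\phi_8(L)$. Since the right-hand sides are intrinsic to $L$, the tuple $(a_0,a_1,\ldots,a_7,a_9,a_{10})$ is the same for every fundamental presentation. Finally $\varepsilon_L$ is determined by $L$ by its very definition \eqref{eq:defeps}. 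This shows that the coefficients in any fundamental presentation of $L$ are unique.

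The step carrying all the weight is of course the implication ``an isotropic $10$-sequence as in Proposition \ref{prop:sid} computes $\underline{\phi}(L)$'', i.e.\ Proposition \ref{prop:phivector0}, which in turn rests on the combinatorial inequalities of Lemma \ref{lemma:intminime2}; but that work is already done. The only point to watch in writing up Theorem \ref{thm:uniquefund} itself is that the hypotheses packaged into Definition \ref{def:fund} (conditions \eqref{eq:defeps}, \eqref{eq:condcoff}, \eqref{eq:condcoff'}) are literally the conclusions of Proposition \ref{prop:sid}, so that Proposition \ref{prop:phivector0} applies verbatim — which they are by construction. Hence the theorem is a clean repackaging, with no new obstacle.
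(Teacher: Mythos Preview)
Your proposal is correct and follows exactly the paper's own approach: the paper simply states that Theorem \ref{thm:uniquefund} summarizes Propositions \ref{prop:sid} and \ref{prop:phivector0}, with existence coming from the former and uniqueness of the coefficients from the latter via the formulas expressing the $a_i$ in terms of $\underline{\phi}(L)$. Your write-up merely unpacks this in slightly more detail, but there is no difference in substance or strategy.
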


\begin{definition} \label{def:fund2}
  The coefficients $a_i=a_i(L)$, $i \in\{0,1,\ldots,7,9,10\}$ and $\varepsilon_L$ appearing in any
  fundamental presentation of $L$ will be called {\em the fundamental coefficients of $L$}. 
  \end{definition}


As a consequence of Theorems \ref{mainthm2} and \ref{thm:uniquefund} we obtain:

\begin{thm} \label{mainthm2'}
  The irreducible components of $\E_{g}$ are precisely the loci parametrizing pairs of arithmetic genus $g$ with the same fundamental coefficients.  
\end{thm}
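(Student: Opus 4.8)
The plan is to combine Theorem~\ref{mainthm2} with the uniqueness of fundamental presentations (Theorem~\ref{thm:uniquefund}). By Theorem~\ref{mainthm2}, the irreducible components of $\E_g$ are exactly the loci of pairs admitting the same simple decomposition type; so it suffices to show that two pairs $(S,L)$ and $(S',L')$ of arithmetic genus $g$ admit the same simple decomposition type if and only if they have the same fundamental coefficients. The ``only if'' direction will be the easy one: a fundamental presentation is in particular a simple isotropic decomposition (Proposition~\ref{prop:sid}), and the intersection pattern of the isotropic $10$-sequence $\{E_1,\dots,E_{10}\}$ together with $E_{9,10}$ is fixed by Lemma~\ref{lemma:ceraprima} (all $E_i\cdot E_j=1$, $E_i\cdot E_{9,10}=2$ for $i\in\{9,10\}$, $E_i\cdot E_{9,10}=1$ otherwise), so two pairs with equal fundamental coefficients automatically have equal simple decomposition types.

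For the ``if'' direction I would argue contrapositively, or rather directly, as follows. Suppose $(S,L)$ and $(S',L')$ admit the same simple decomposition type; I want to conclude that their fundamental coefficients coincide. The cleanest route is to observe that the simple decomposition type of a pair determines the $\phi$-vector $\underline{\phi}(L)$: indeed, by Theorem~\ref{thm:phivector1}(g) (equivalently, by Proposition~\ref{prop:phivector0} together with Lemma~\ref{lemma:intminime2}), the value $\underline{\phi}(L)$ is an invariant of $L$, and the fundamental coefficients are explicit functions of $\underline{\phi}(L)$ by Proposition~\ref{prop:phivector0}. So it remains to see that the $\phi$-vector is constant on a simple-decomposition-type locus. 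This should follow because, within such a locus, $g$ is fixed and one can pass to a universal family (or just use that the loci are irreducible by Theorem~\ref{mainthm2}) and invoke that $\underline{\phi}$, being defined by a minimum of intersection numbers over isotropic $10$-sequences, is upper semicontinuous and takes integer values, hence is locally constant, hence constant on each component. Alternatively, and perhaps more in the spirit of the paper, one checks directly that a fundamental presentation can be read off from \emph{any} simple isotropic decomposition via the rewriting moves in the proof of Proposition~\ref{prop:sid}, and these moves only depend on the coefficients and the intersection type, not on the particular surface; since by Theorem~\ref{thm:uniquefund} the output is unique, equal decomposition types produce equal fundamental coefficients.

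Putting the two directions together: the map sending a component of $\E_g$ to the common tuple of fundamental coefficients (with fixed $g$) of the pairs it parametrizes is well-defined by Theorem~\ref{thm:uniquefund}, is injective by the ``if'' direction above, and its fibers are exactly the same-simple-decomposition-type loci, which are the components by Theorem~\ref{mainthm2}. Hence the components of $\E_g$ are precisely the loci of pairs of arithmetic genus $g$ with the same fundamental coefficients, and in fact the correspondence with the admissible $11$-tuples $(\phi_1,\dots,\phi_{10},\varepsilon)$ of Theorem~\ref{mainthm3} is established along the way.

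The main obstacle I anticipate is the ``if'' direction, specifically verifying that the simple decomposition type of a pair determines its fundamental coefficients. The subtlety, flagged in the remark after Theorem~\ref{thm:uniquefund}, is that the isotropic $10$-sequence appearing in a fundamental presentation is \emph{not} unique, not even up to numerical equivalence or permutation, and a given line bundle may admit simple isotropic decompositions of genuinely different types (cf.\ \cite[Rmk.~4.14]{cdgk}). So one cannot simply say ``the type is the presentation''; one must invoke that \emph{admitting the same decomposition type} is an equivalence relation whose classes are the components (Theorem~\ref{mainthm2}), and then use the invariance of $\underline{\phi}(L)$ — which is the content already secured by Lemma~\ref{lemma:intminime2} and Proposition~\ref{prop:phivector0} — to pin down the coefficients. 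Once that invariance is in hand, the rest is bookkeeping.
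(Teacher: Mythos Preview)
Your proposal is correct and follows essentially the same strategy as the paper, though with a slightly different organization and one harmless labeling slip: you have the ``if'' and ``only if'' directions interchanged (what you call ``only if'' is the implication \emph{same fundamental coefficients} $\Rightarrow$ \emph{same decomposition type}, which is the ``if'' direction of the biconditional you set up, and vice versa).

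The paper's proof is marginally more direct. For the implication \emph{same component} $\Rightarrow$ \emph{same fundamental coefficients}, the paper does not pass through Theorem~\ref{mainthm2} at all: it simply observes that the Picard group is invariant under deformation, so $\underline{\phi}(L)$ and $\varepsilon_L$ are constant on any irreducible component, and then invokes Proposition~\ref{prop:phivector0} to recover the $a_i$. Your route---first using Theorem~\ref{mainthm2} to reduce to \emph{same decomposition type}, then arguing that $\underline{\phi}$ is constant on such a locus via semicontinuity/irreducibility---amounts to the same thing with one extra step. Your combinatorial alternative (that the rewriting moves in the proof of Proposition~\ref{prop:sid} depend only on the coefficients and intersection pattern, hence equal types yield equal fundamental coefficients by the uniqueness in Theorem~\ref{thm:uniquefund}) is a perfectly valid and slightly different argument that avoids any deformation reasoning; the paper does not use it. For the converse implication both you and the paper argue identically: a fundamental presentation is a simple isotropic decomposition with a fixed intersection pattern, so equal fundamental coefficients give equal decomposition types, and Theorem~\ref{mainthm2} finishes.
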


\begin{proof}
  Assume that $(S,L)$ and $(S',L')$ lie in the same irreducible component of $\E_g$. Then $\varepsilon_L=\varepsilon_{L'}$ and $\underline{\phi}(L)=\underline{\phi}(L')$, since the Picard group is invariant under deformation. By Proposition \ref{prop:phivector0} we get that $L$ and $L'$ have the same fundamental coefficients.

  Conversely, assume that $L$ and $L'$ have the same fundamental coefficients. Since a fundamental presentation is a particular type of simple isotropic decomposition, $(S,L)$ and $(S',L')$ admit the same simple decomposition type, whence they lie in the same irreducible component of $\E_g$ by Theorem \ref{mainthm2}. 
  \end{proof}

We will reformulate this theorem using the $\phi$-vector, leading to Theorem \ref{mainthm3}. 
We  first prove Theorem \ref{thm:phivector1}.

\begin{proof}[Proof of Theorem \ref{thm:phivector1}]
Property (a) follows by definition and property
(e) follows from  Lemma \ref{lemma:intminime1} and Proposition  \ref{prop:phivector0}. To prove (b) and (c), consider the expressions of the coefficients $a_i$ in terms of the $\phi_j=\phi_j(L)$ from Proposition \ref{prop:phivector0}.
  It follows that $\sum_{i=1}^{10}\phi_i$ must be  divisible by $3$.
  Moreover, 
one checks that the conditions \eqref{eq:condcoff'} are equivalent to $\frac{1}{3}\sum_{i=1}^{10}\phi_i \geq \phi_8+\phi_9+\phi_{10}$, which yields (c). 

To prove (d), 
write
\begin{eqnarray*}
  L^2& \hspace{-0.2cm} = & \hspace{-0.2cm} L \cdot(a_1E_1+\cdots+a_7E_7+a_9E_9+a_{10}E_{10}+a_{0}E_{9,10})  \\
  & \hspace{-0.2cm} = & \hspace{-0.2cm} a_1\phi_1+\cdots+ a_7\phi_7+a_{9}\phi_9+a_{10}\phi_{10}+a_0(a_1+\cdots +a_7+2a_9+2a_{10}),
  \end{eqnarray*}
and insert the values of $a_i$ from Proposition \ref{prop:phivector0}.

  The ``only if'' part of (f) is obvious. Conversely, assume that all
  $\phi_i$ are even.  Setting $a:=a_0+a_1+\cdots+a_7+a_9+a_{10}$, we have
  \begin{eqnarray*}
    \phi_8 & = & E_8 \cdot L = a, \\
    \phi_9 & = & E_9 \cdot L = a+a_0-a_9, \\
    \phi_{10} & = & E_{10} \cdot L = a+a_0-a_{10}. 
  \end{eqnarray*}
  Thus, $a$, $a_0-a_9$ and $a_0-a_{10}$ are all even. Moreover, by Proposition \ref{prop:phivector0}, all $a_i$ are even for $i \in \{1,\ldots,7\}$.  Since
  \[ a= a_1+\cdots+a_7+(a_9-a_0)+(a_{10}-a_0)+3a_0,\]
  we see that also $a_0$ is even, whence also $a_9$ and $a_{10}$. Hence, all coefficients $a_i$ in the fundamental presentation \eqref{eq:scrivoL} are even, and it follows that $L$ is numerically $2$-divisible. 

  Finally we prove (g). The fact that any isotropic $10$-sequence appearing in a fundamental presentation of $L$
  computes $\underline{\phi}$ follows from Proposition \ref{prop:phivector0}. Conversely, let $\{E_1,\ldots,E_{10}\}$ be any isotropic $10$-sequence
  computing $\underline{\phi}$. Define integers $a_i$, for $i \in \{0,1,\ldots,7,9,10\}$ as in Proposition \ref{prop:phivector0}. 
  Set $A:=L-a_1E_1-\cdots-a_7E_7-a_9E_9-a_{10}E_{10}-a_0E_{9,10}$.
Then one readily computes $A^2=0$ and
  $E_i \cdot A=0$ for all $i \in \{1,\ldots,10\}$. Hence $A \equiv 0$ by the Hodge index theorem. Thus, $L \equiv a_1E_1+\cdots+a_7E_7+a_9E_9+a_{10}E_{10}+a_0E_{9,10}$. If all coefficients $a_i$ are even, then $L$ is numerically $2$-divisible, whence $L \sim a_1E_1+\cdots+a_7E_7+a_9E_9+a_{10}E_{10}+a_0E_{9,10}+\varepsilon_L K_S$ by definition of $\varepsilon_L$ (cf. \eqref{eq:defeps}). If $a_{i_0}$ is odd, for some $i_0\in \{0,1,\ldots,7,9,10\}$, then substitute $E_{i_0}$ with $E_{i_0}+K_S$ and $E_8$ by $E_8+K_S$ if necessary, to make sure that $L \sim a_1E_1+\cdots+a_7E_7+a_9E_9+a_{10}E_{10}+a_0E_{9,10}$ (note that $\varepsilon_L=0$ by Remark \ref{rem:eps}).

  To prove the last statement, assume $(\phi_1,\ldots, \phi_{10})$ satisfies (a)-(c). Define integers $a_0,a_1,\ldots,a_7,a_9,a_{10}$ by
  $a_i  =  \phi_8-\phi_i$ for $i \in \{1,\ldots,7\}$,
  $a_i  =  \frac{1}{3}\sum_{i=1}^{10}\phi_i-2\phi_8-\phi_i$, for $ i \in \{9,10\}$, and 
  $a_0  =  \frac{1}{3}\sum_{i=1}^{10}\phi_i-3\phi_8$. Then these are all nonnegative, not all zero, and satisfy the conditions    \eqref{eq:condcoff} and \eqref{eq:condcoff'}. Take any isotropic $10$-sequence of effective divisors $\{E_1,\ldots,E_{10}\}$ and define $E_{9,10}:=\frac{1}{3}(E_1+\ldots+E_{10})-E_9-E_{10}$ as usual (cf. Lemma \ref{lemma:ceraprima}). Let $L \equiv a_1E_1+\cdots+a_7E_7+a_9E_9+a_{10}E_{10}+a_{9,10}E_{9,10}$. Then  $L$ satifies the desired conditions by Proposition \ref{prop:phivector0}.                 
\end{proof}

\begin{proof}[Proof of Theorem \ref{mainthm3}]
By Theorem \ref{mainthm2'}  the irreducible components of $\E_g$ are determined by the fundamental coefficients, which are by Proposition \ref{prop:phivector0} determined by the $\phi$-vector and value of $\varepsilon_L$. The possible values of   the $\phi$-vector are determined by conditions (i)-(iii) by Theorem \ref{thm:phivector1} and the value of $\varepsilon_L$ satisfies (iv) by Remark \ref{rem:eps}. The value of $g$ satisfies (v) by Theorem \ref{thm:phivector1}(d). The form of the fundamental presentation is again given by  Proposition \ref{prop:phivector0}.
\end{proof}

\begin{proof}[Proof of Proposition \ref{prop:dom}]
  From Proposition \ref{prop:phivector0} one finds that
$\widehat{\E}_{621;30,31,32,33,34,35,36,37,38,39}$  parametrizes pairs $(S,[L])$ with fundamental presentation
  (modulo $K_S$)
  \begin{equation} \label{eq:star}
L \equiv 7E_1+6E_2+5E_3+4E_4+3E_5+2E_6+E_7+3E_9+2E_{10}+4E_{9,10}.
  \end{equation}

  \begin{claim}
    The only isotropic $10$-sequence computing $\underline{\phi}(L)$ is, up to numerical equivalence, $(E_1,\ldots,E_{10})$. 
  \end{claim}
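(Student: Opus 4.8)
First I would record, via Proposition~\ref{prop:phivector0}, that the tuple $(30,31,\ldots,39)$ corresponds to the fundamental coefficients $(a_1,\ldots,a_7)=(7,6,5,4,3,2,1)$, $a_9=3$, $a_{10}=2$, $a_0=4$, and that $\underline{\phi}(L)=(30,31,\ldots,39)$; the structural feature I will use repeatedly is that these ten entries are pairwise distinct. Let $(F_1,\ldots,F_{10})$ be an arbitrary isotropic $10$-sequence computing $\underline{\phi}(L)$, so $F_i\cdot L=29+i$. Since $\phi_1,\ldots,\phi_8=30,\ldots,37$ are, by Theorem~\ref{thm:phivector1}(e), the eight lowest intersection numbers of $L$ with effective isotropic divisors, and are pairwise distinct, each of them is achieved by a unique numerical class; as $E_i\cdot L=\phi_i$ for $i\le 8$, this already forces $F_i\equiv E_i$ for every $i\in\{1,\ldots,8\}$, so it remains only to identify $F_9$ and $F_{10}$.

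Next I would compare the two fundamental presentations of $L$ determined by $(E_\bullet)$ and $(F_\bullet)$, which by Theorem~\ref{thm:phivector1}(g) and Proposition~\ref{prop:phivector0} have the same coefficients $a_i$. Writing $3D=E_1+\cdots+E_{10}$, $E_{9,10}=D-E_9-E_{10}$, $3D'=F_1+\cdots+F_{10}$, $F_{9,10}=D'-F_9-F_{10}$, and cancelling the common term $\sum_{i=1}^{7}a_iE_i$ (legitimate since $F_i\equiv E_i$ for $i\le 7$), I obtain in $\Num S$
\[
a_9E_9+a_{10}E_{10}+a_0E_{9,10}\equiv a_9F_9+a_{10}F_{10}+a_0F_{9,10}.
\]
Using $F_i\equiv E_i$ for all $i\le 8$ one has $3(D'-D)=(F_9-E_9)+(F_{10}-E_{10})$; substituting this, together with $a_9=3$, $a_{10}=2$, $a_0=4$, collapses the identity to $E_9-F_9=2(E_{10}-F_{10})$. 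Setting $v:=E_{10}-F_{10}$, this reads $F_{10}=E_{10}-v$ and $F_9=E_9-2v$.

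Finally comes the heart of the argument, a computation in a rank-two lattice. Because $v\cdot E_i=E_{10}\cdot E_i-F_{10}\cdot F_i=0$ for $i\le 8$, the class $v$ lies in $M^{\perp}$, where $M:=\langle E_1,\ldots,E_8\rangle\subset\Num S$; this orthogonal complement is negative definite of rank $2$ (the Gram matrix $J_8-I_8$ of $M$ has signature $(1,7)$ while $\Num S$ has signature $(1,9)$), and it contains $w:=E_9-E_{10}$ and $w':=E_{9,10}-E_9$, whose Gram matrix, computed from Lemma~\ref{lemma:ceraprima}, is $\left(\begin{smallmatrix}-2&1\\1&-4\end{smallmatrix}\right)$, of determinant $7$. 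As $7$ is squarefree, $M^{\perp}=\langle w,w'\rangle$. Writing $v=pw+qw'$ with $p,q\in\ZZ$, the two isotropy conditions $F_9^2=0$ and $F_{10}^2=0$ translate into $E_9\cdot v=v^2$ and $2E_{10}\cdot v=v^2$; expanding both in the basis $\{w,w'\}$ and subtracting yields $3p=0$, and the surviving equation then gives $q(1+2q)=0$. Hence $p=q=0$, i.e. $v=0$, so $F_9\equiv E_9$ and $F_{10}\equiv E_{10}$; since the entries of $\underline{\phi}(L)$ are pairwise distinct, the ordered sequence $(F_1,\ldots,F_{10})$ is numerically $(E_1,\ldots,E_{10})$, which is the assertion.

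I expect the only genuine obstacle to lie in this last step — pinning down $M^{\perp}$ exactly and carrying out the ensuing quadratic Diophantine analysis; everything before it is essentially bookkeeping with the already-established theory of fundamental presentations. Should identifying $M^{\perp}$ prove awkward, an alternative is to solve the two isotropy equations over $\QQ$ inside $\langle w,w'\rangle\otimes\QQ$ — the only rational solutions are $v=0$ and $v=-\frac{1}{2}w'$ — and to discard $v=-\frac{1}{2}w'$ because $\Num S$ is even, so $w'$ is not $2$-divisible, whence $F_{10}$ would fail to be integral.
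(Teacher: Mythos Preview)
Your proof is correct but follows a genuinely different route from the paper's. The paper argues directly at the level of intersection numbers: it shows that any effective isotropic $E$ with $E\not\equiv E_i$ for $i\in\{1,\ldots,10\}$ satisfies $E\cdot L\ge 40$ unless $E\equiv E_{9,10}$ (giving $38$) or $E\equiv E_{8,10}$ (giving $39$), by exploiting the inequality $E\cdot(E_9+E_{10}+E_{9,10})\ge 4$ from Lemma~\ref{lemma:ceraprima}; it then rules out $E_{9,10}$ and $E_{8,10}$ as ninth or tenth members because $E_{9,10}\cdot E_{10}=E_{8,10}\cdot E_8=2$. Your argument instead leverages the machinery of fundamental presentations already in place: after pinning down $F_1,\ldots,F_8$ via Theorem~\ref{thm:phivector1}(e), you equate the two fundamental presentations (legitimate by Theorem~\ref{thm:phivector1}(g) and uniqueness of fundamental coefficients) and reduce the problem to a quadratic Diophantine system in the rank-$2$ negative-definite lattice $M^{\perp}$.

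What each approach buys: the paper's computation is self-contained and yields the finer information of exactly which isotropic classes achieve low intersection with $L$, which is arguably more transparent. Your method is more structural and would adapt more readily to other fundamental presentations with pairwise distinct $\phi_i$, since the key linear relation $(3a_9-2a_0)u+(3a_{10}-2a_0)v=0$ is derived in general before specialising the coefficients; it also neatly sidesteps any case analysis on $E\cdot E_{9,10}$. Your squarefree-discriminant argument for $M^{\perp}=\langle w,w'\rangle$ is clean and does not require checking primitivity of $M$: since $\langle w,w'\rangle\subset M^{\perp}$ with both of rank $2$, the relation $7=d^2\,|\mathrm{disc}(M^{\perp})|$ forces $d=1$.
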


  \begin{proof}[Proof of claim]
    Let $E \not \equiv E_i$ for $i \in \{1,\ldots,10\}$ be any non-zero isotropic effective divisor. We will prove that $E \cdot L \geq 40 > E_{10} \cdot L=39$, unless $E \equiv E_{9,10}$ or $E_{8,10}$, in which case $E \cdot L=38$ or $39$, respectively. It will follow, since $E_8 \cdot L=37$, that the eight first members of any isotropic $10$-sequence computing $\underline{\phi}(L)$ are, up to numerical equivalence, $E_1,\ldots,E_{8}$, and that the only way to complete this to a $10$-sequence computing $\underline{\phi}(L)$ is with $E_9$ and $E_{10}$, which will prove the claim.

    Assume therefore that $E$ is non-zero effective isotropic, with $E \not \equiv E_i$ for $i \in \{1,\ldots,10\}$ and $E \not \equiv E_{9,10},E_{8,10}$. 
Since $E \cdot E_i >0$ for $i \in \{1,\ldots,10\}$ by \cite[Lemma 2.1]{klvan}, we have
\begin{equation} \label{eq:cp1}
  E \cdot (E_9+E_{10}+E_{9,10})=\frac{1}{3}E \cdot \left(E_1+\cdots+ E_{10}\right)>3,
  \end{equation}
  by Lemma \ref{lemma:ceraprima}. Therefore, as also $E \cdot E_{9,10}>0$ by \cite[Lemma 2.1]{klvan}, we have
\begin{eqnarray*}
  E \cdot L & \geq & E \cdot (7E_1+\cdots+E_7)+
                     2 E \cdot (E_9+E_{10}+E_{9,10}) + E \cdot E_9 + 2 E \cdot E_{9,10}
  \\
  & \geq & 
  7+6+5+4+3+2+1+ 2 \cdot 4 + 1+2 = 39,
  \end{eqnarray*}
and equality occurs if and only if $E \cdot E_i=1$ for $i \in \{1,\ldots,7,9\}$, $E \cdot E_{10}=2$ and $E \cdot E_{9,10}=1$. In this case we must have $E \cdot E_8 =2$ by \eqref{eq:cp1}. By Lemma \ref{lemma:ceraprima},  we have \[ 4+E \cdot E_{8,10} =E \cdot (E_8+E_{10}+E_{8,10}) = E \cdot (E_9+E_{10}+E_{9,10})=4,\]
whence $E \equiv E_{8,10}$ by \cite[Lemma 2.1]{klvan}, a contradiction.
  \end{proof}

Let $\underline{\phi}=(\phi_1,\ldots,\phi_{10})$ be any $10$-tuple of integers satisfying (i)-(iii) in Theorem \ref{mainthm3} and let $g$ be as in Theorem \ref{mainthm3}(v). Let $(a_0,a_1,\ldots,a_7,a_9,a_{10})$ be the associated fundamental coefficients as
in
  Proposition \ref{prop:phivector0}.  The claim implies that any member of the component $\widehat{\E}_{621;30,31,32,33,34,35,36,37,38,39}$ has a fundamental presentation  as in \eqref{eq:star} for a unique isotropic $10$-sequence $(E_1,\ldots,E_{10})$ up to numerical equivalence. Therefore,  the map $\mu_{\underline{\phi}}$  sending $(S,[L]=[7E_1+6E_2+5E_3+4E_4+3E_5+2E_6+E_7+3E_9+2E_{10}+4E_{9,10}])$ to
  $(S,[a_1E_1+a_2E_2+a_3E_3+a_4E_4+a_5E_5+a_6E_6+a_7E_7+a_9E_9+a_{10}E_{10}+a_0E_{9,10}])$  is a well-defined morphism 
$\mu_{\underline{\phi}}:\widehat{\E}_{621;30,31,32,33,34,35,36,37,38,39} 
  \longrightarrow \widehat{\E}_{g;\phi_1,\ldots,\phi_{10}}$.  Given any element $(S',[L'])$ in $\widehat{\E}_{g;\phi_1,\ldots,\phi_{10}}$, we may write
  \[ L' \equiv a_1E'_1+a_2E'_2+a_3E'_3+a_4E'_4+a_5E'_5+a_6E'_6+a_7E'_7+a_9E'_9+a_{10}E'_{10}+a_0E'_{9,10},\]
  for an isotropic $10$-sequence $\{E'_1,\ldots,E'_{10}\}$ computing $\phi(L')$, by Theorem \ref{mainthm3} (with $E'_{9,10} \equiv \frac{1}{3}\left(E'_1+\cdots+E'_{10}\right)$). Define
  \[ A= 7E'_1+6E'_2+5E'_3+4E'_4+3E'_5+2E'_6+E'_7+3E'_9+2E'_{10}+4E'_{9,10}.\]
Then $(S',[A]) \in \widehat{\E}_{621;30,31,32,33,34,35,36,37,38,39}$ by Theorem \ref{mainthm3} and 
$\mu_{\underline{\phi}}\left((S',[A])\right)=(S',[L'])$. Thus, $\mu_{\underline{\phi}}$ is surjective. 
\end{proof}

\begin{remark}
  The crucial point in the previous proof is the uniqueness of the isotropic $10$-sequence computing $\underline{\phi}$, as we otherwise would not have a well-defined map as claimed. One can prove that $\widehat{\E}_{621;30,31,32,33,34,35,36,37,38,39} $ is the component of lowest genus with such a property.
\end{remark}

\begin{question} \label{Q:same}
  By \cite[(4), (7) and Prop. 5.7]{GrHu} also the moduli space of Enriques surfaces with a level-$2$-structure $\widetilde{\mathcal{M}}^0_{En}$ enjoys the property that it dominates all irreducible components of the moduli space of numerically polarized Enriques surfaces. Are the spaces $\widehat{\E}_{621;30,31,32,33,34,35,36,37,38,39}$ and $ \widetilde{\mathcal{M}}^0_{En}$ isomorphic?
\end{question}

\begin{remark} \label{rem:border}
  Consider any irreducible component $\E'$ of $\E_g$ for a fixed value of \linebreak $(\phi_1,\ldots,\phi_{10},\varepsilon)$ with $\phi_1 \geq 3$. Then, as $\phi(L) =\phi_1$ for all $(S,H) \in \E'$ by Theorem \ref{thm:phivector1}, we have that $|L|$ is very ample for general $(S,L)$, cf. \cite[Thm. 4.6.1]{cd}. The general member of the irreducible component of the Hilbert scheme $\mathfrak{H}'$ containing the projective models $\phi_L(S)$ of $(S,L) \in \E'$ is thus a smooth Enriques surface of degree $2g-2$ in $\PP^{g-1}$. Proposition \ref{prop:sid} and Theorem \ref{thm:hilbert} show that there are smooth points in $\mathfrak{H}'$ represented by a reducible surface $\overline{X}=\overline{R} \cup_{\overline{T}} \cup \overline{P}$
  with $\overline{P}$ rational and $\overline{R}$ birational to the symmetric product of a double cover of the smooth elliptic curve $\overline{T}$, and
  $\overline{R}$ and $\overline{P}$ intersect transversally and only along $\overline{T}$. Computing the fundamental coefficients $a_i$ from the $\phi_i$ as in Proposition \ref{prop:phivector0} one can find out precisely what the surfaces
  $\overline{R}$ and $\overline{P}$ are as well as their hyperplane bundles, cf. Remark \ref{rem:chicont}. We thus have in each irreducible component of the Hilbert scheme of smooth Enriques surfaces a  concrete reducible member that we hope will find more applications in the future.
\end{remark}

We end by showing how to compute various irreducible components of $\E_g$ in some cases.
We will use the notation for the components proposed in the introduction.

\begin{example1*} This case corresponds to line bundles $L$ with $\phi(L)=1$, by Theorem \ref{thm:phivector1}. One readily checks that the only possibility given by conditions (i)-(iv) in Theorem \ref{mainthm3} for fixed $g$ is $(\phi_1,\ldots,\phi_{10})=(1,g-1,g,\ldots,g)$. In particular there is only one irreducible component of $\E_g$ with $\phi_1=1$, which we denote by $\E_{g;1,g-1,g,\ldots,g}$ with the above notation. The fundamental presentation is
$(g-1)E_1+E_2$.
  We thus retrieve \cite[Cor. 1.3 and Lemma 4.18(i)]{cdgk}.
\end{example1*}
  
\begin{example2*} This case corresponds to line bundles $L$ with $\phi(L)=2$, by Theorem \ref{thm:phivector1}. One readily checks that the only possibilities for $\underline{\phi}:=(\phi_1,\ldots,\phi_{10})$ given by conditions (i)-(iv) in Theorem \ref{mainthm3} for fixed $g$ are \begin{footnotesize}
  \[ \underline{\phi} \in \left\{\left(2,\frac{g+1}{2},\ldots,\frac{g+1}{2}, \frac{g+3}{2}\right), \left(2,\frac{g}{2},\frac{g}{2},\frac{g+2}{2},\ldots,\frac{g+2}{2}\right),
 \left(2,\frac{g-1}{2},\frac{g+3}{2},\ldots,\frac{g+3}{2}\right)\right\}. \]   
    \end{footnotesize}
    Thus, we get the following irreducible components of $\E_g$ with fundamental presentations:
    \begin{itemize}
    \item $\E_{g;2,\frac{g}{2},\frac{g}{2},\frac{g+2}{2},\ldots,\frac{g+2}{2}}$ for even $g \geq 4$; \; \; \; \; \; \; \; \; $\frac{g-2}{2}E_1+E_2+E_3$;
      \item $\E_{g;2,\frac{g+1}{2},\ldots,\frac{g+1}{2}, \frac{g+3}{2}}$ for odd $g \geq 3$;
\; \; \; \; \; \; \;  \hspace{0.2cm} $\frac{g-3}{2}E_1+E_9+E_{9,10}$;
      \item $\E_{g;2,\frac{g-1}{2},\frac{g+3}{2},\ldots,\frac{g+3}{2}}$ for $g \equiv 3 \; \mbox{mod} \; 4$, $g \geq 7$; \; \; $\frac{g-1}{2}E_1+2E_2$;
      \item $\E_{g;2,\frac{g-1}{2},\frac{g+3}{2},\ldots,\frac{g+3}{2}}^+$ for $g \equiv 1 \; \mbox{mod} \; 4$, $g \geq 5$; \; \; $\frac{g-1}{2}E_1+2E_2$;
        \item $\E_{g;2,\frac{g-1}{2},\frac{g+3}{2},\ldots,\frac{g+3}{2}}^-$
  for $g \equiv 1 \; \mbox{mod} \; 4$, $g \geq 5$; \; \; $\frac{g-1}{2}E_1+2E_2+K_S$.
\end{itemize}
Note that in the second case we may use \eqref{eq:ijk} and rewrite $\frac{g-3}{2}E_1+E_9+E_{9,10} \sim \frac{g-1}{2}E_1+E_{1,10}$. In particular, we see that the three irreducible components $\E_{5,2}^{(I)}$, $\E_{5,2}^{(II)^+}$ and $\E_{5,2}^{(II)^-}$ from the introduction are $\E_{5;2,3,\ldots,3, 4}$, $\E_{5;2,2,4,\ldots,4}^+$ and $\E_{5;2,2,4,\ldots,4}^-$, respectively. We also retrieve \cite[Cor. 1.3 and Lemma 4.18(ii)]{cdgk}. 
\end{example2*}

\begin{example3*} This case corresponds to line bundles $L$ with $\phi(L)=3$, by Theorem \ref{thm:phivector1}. Checking the possibilities for $\underline{\phi}:=(\phi_1,\ldots,\phi_{10})$ given by conditions (i)-(iv) in Theorem \ref{mainthm3} for fixed $g$, we get the following irreducible components of $\E_g$ with fundamental presentations: 
\begin{itemize}
\item $\E_{g; 3,\frac{g+3}{3},\ldots,\frac{g+3}{3}}$ \hspace{1.71cm} for $g \equiv 0 \; \mbox{mod} \; 3$, $g \geq 6$;  \hspace{0.4cm} $\frac{g-6}{3}E_1+E_9+E_{10}+E_{9,10}$;
 \item $\E_{g; 3,\frac{g}{3},\frac{g+3}{3},\frac{g+6}{3},\ldots,\frac{g+6}{3}}$ \hspace{0.93cm}for $g \equiv 0 \; \mbox{mod} \; 3$, $g \geq 9$; \hspace{0.35cm} $\frac{g-3}{3}E_1+2E_2+E_3$;
\item $\E_{g;3,\frac{g+2}{3},\frac{g+2}{3},\frac{g+2}{3},\frac{g+5}{3},\ldots,\frac{g+5}{3}}$  for $g \equiv 1 \; \mbox{mod} \; 3$, $g \geq 7$; \; \; $\frac{g-4}{3}E_1+E_2+E_3+E_4$;
\item $\E_{g;3,\frac{g-1}{3},\frac{g+8}{3},\ldots,\frac{g+8}{3}}$ \hspace{1.15cm} for $g \equiv 1 \; \mbox{mod} \; 3$, $g \geq 10$; \hspace{0.18cm}  $\frac{g-1}{3}E_1+3E_2$;
  \item $\E_{g; 3,\frac{g+1}{3},\frac{g+4}{3},\ldots,\frac{g+4}{3}, \frac{g+7}{3}}$ \hspace{0.55cm} for $g \equiv 2 \; \mbox{mod} \; 3$, $g \geq 8$; \hspace{0.37cm}  $\frac{g-5}{3}E_1+E_2+E_9+E_{9,10}$. 
\end{itemize}
Note that in the first case we may use \eqref{eq:ijk} and rewrite
$\frac{g-6}{3}E_1+E_9+E_{10}+E_{9,10} \sim \frac{g-3}{3}E_1+E_2+E_{1,2}$ and likewise in the last case we may rewrite $\frac{g-5}{3}E_1+E_2+E_9+E_{9,10}\sim \frac{g-2}{3}E_1+E_2+E_{1,10}$. In particular, we retrieve \cite[Cor. 1.3 and Lemma 4.18(iii)]{cdgk}. 
\end{example3*}

\begin{remark}
  Although fundamental presentations are of a suitable  form to prove the results in the present paper, there may be other effective isotropic decompositions that are more suitable to work with for other purposes. In particular, as we have seen in the above examples, in certain cases we may obtain simple isotropic decompositions with fewer isotropic components than the fundamental presentation using \eqref{eq:10-3} or \eqref{eq:ijk}. This is useful to find the relation between the $\phi$-vector and the results of \cite{cdgk}, where it is proved that irreducible components of the moduli spaces $\E_g$ are unirational if the members admit simple isotropic decompositions with at most $4$ nonzero coefficients, or $5$ nonzero coefficients or $7$ equal coefficients with all intersections between the isotropic components being $1$, cf. \cite[Thms. 1.1-1.2]{cdgk}. Let us see how to relate this to the $\phi$-vector.

A simple isotropic decomposition with $7$ equal coefficients and with all intersections $1$ between the isotropic components corresponds to the fundamental coefficients satisfying
  \begin{equation}
\label{eq:f6}   a_1=\cdots=a_7.
  \end{equation}
  A simple isotropic decomposition with at most $4$ nonzero coefficients
  can be obtained if the fundamental coefficients satisfy any of the following:
  \begin{eqnarray}
   \label{eq:f2}   & a_2=\cdots=a_7  =  0, & \\
   \label{eq:f4}  & a_3=\cdots=a_7  =  0, \; \; a_0=a_9,&  \\
   \label{eq:f3}  & a_4=\cdots=a_7  =  0, \; \; a_0=a_9=a_{10},& \\
   \label{eq:f5}   & a_3=\cdots=a_7  =  0, \; \; a_9+a_{10}=a_0.&  
  \end{eqnarray}
  This is clear in the first case; in the last three cases one may rewrite
  $a_9E_9+a_{10}E_{10}+a_0E_{9,10}$ to  $a_0E_1+a_{10}E_{10}+a_0E_{1,10}$, $a_0(E_1+E_2+E_{1,2})$,  and $a_0E_1+(a_0-a_{10})E_{9,10}+a_{10}E_{1,9}$, respectively.
  Finally,  a simple isotropic decomposition with at most $5$ nonzero coefficients with all intersections $1$ between the isotropic components corresponds to
  \begin{equation}
   \label{eq:f1}  a_6=a_7=a_9=a_{10}  = 0. 
  \end{equation}
Thus, the cases \eqref{eq:f6}-\eqref{eq:f1} all give unirational components of $\E_g$. Translating these conditions into conditions on the $\phi$-vector using 
Proposition \ref{prop:phivector0}, we obtain that {\it $\E_{g,\phi_1,\ldots,\phi_{10}}$ and $\E^{\pm}_{g,\phi_1,\ldots,\phi_{10}}$ are unirational in any of the following cases:}
  \begin{eqnarray*}
    & \phi_1=\cdots=\phi_7, & \\
    & \phi_2=\cdots=\phi_8, & \\
    & \phi_3=\cdots=\phi_9, & \\
    & \phi_4=\cdots=\phi_{10}, & \\
    & \phi_3=\cdots=\phi_8=\frac{1}{3}\left( 2(\phi_9+\phi_{10})-\phi_1-\phi_2\right), & \\
    & \phi_6=\cdots=\phi_{10}=\frac{1}{4}\left(\phi_1+\cdots+\phi_5\right). & \\
  \end{eqnarray*}
This may suggest that symmetries between the $\phi_i$ guarantee unirationality. One may obtain similar conditions for uniruledness of components, cf. again  \cite[Thms. 1.1-1.2]{cdgk}.
\end{remark}

%
%


\begin{thebibliography}{[FKP3]}

\bibitem{BHPV} W.~Barth, K.~Hulek, C.~Peters, A.~van de Ven, {\it Compact Complex Surfaces}, Ergebnisse der Mathematik und ihrer Grenzgebiete. 3. Folge,  Vol. {\bf 4}  (2004), 
    Springer-Verlag Berlin Heidelberg. 
  

\bibitem{Ca} G.~Casnati, {\it
The  moduli  space  of  Enriques  surfaces  with  a  polarization  of
degree $4$ is rational}, Geom. Dedicata
{\bf 106}
(2004), 185--194. 





\bibitem{CaCi} F.~Catanese, C.~Ciliberto, {\it Symmetric products of elliptic curves and surfaces of general type with $p_g=q=1$}, J. Algebraic Geom. {\bf 2} (1993), 389--411.



 


\bibitem{cdgk} C.~Ciliberto, T.~Dedieu, C.~Galati, A.~L.~Knutsen, {\it  Irreducible unirational and uniruled components of moduli spaces of  polarized Enriques surfaces}, \texttt{arxiv:1809.10569}.

\bibitem{cdgk2} C.~Ciliberto, T.~Dedieu, C.~Galati, A.~L.~Knutsen, {\it 
  Moduli of curves on Enriques surfaces}, Adv. Math. {\bf 365} (2020), 107010,
  https://doi.org/10.1016/j.aim.2020.107010



\bibitem{cdgk-deg} C.~Ciliberto, T.~Dedieu, C.~Galati, A.~L.~Knutsen, {\it Nonemptiness of Severi varieties on Enriques surfaces}, in preparation.

\bibitem{clm} C.~Ciliberto, A.~F.~Lopez, R.~Miranda, {\it Projective degenerations of $K3$ surfaces, Gaussian maps and Fano threefolds}, Invent. Math. {\bf 114} (1993), 641--667.
  








\bibitem{cos2}  F.~R.~Cossec, {\it On the Picard group of Enriques surfaces}, Math. Ann. {\bf 271} (1985), 577--600. 



\bibitem{cd}  F.~R.~Cossec,  I.~V.~Dolgachev,  {\it Enriques surfaces. I.}
 Progress in Mathematics, {\bf 76}. Birkh\"auser Boston, Inc., Boston, MA, 1989.






\bibitem{dol}  I.~V.~Dolgachev,  {\it A brief introduction to Enriques surfaces}, in: Development of Moduli Theory--Kyoto 2013,  Advanced Studies in Pure Mathematics {\bf 69} (2016),  1--32. 






\bibitem{fri} R.~Friedman, {\it Global smoothings of varieties with normal crossings}, Annals of Math. {\bf 118} (1983), 75--114.

 \bibitem{fri2} R.~Friedman, {\it A new proof of the global Torelli theorem for $K3$ surfaces}, Annals of Math. {\bf 120} (1984), 237--269.



\bibitem{GrHu} V. Gritsenko, K. Hulek, {\it Moduli of polarized Enriques surfaces}, in {\it K3 Surfaces and Their Moduli},  Progress in Mathematics {\bf 315} (2016), 55--72


\bibitem{kn-dp} A.~L.~Knutsen, {\it Higher order birational embeddings of Del Pezzo surfaces}, Mathematische Nachrichten {\bf 254--255} (2003), 183--196.

 
\bibitem{kn-man} A.~L.~Knutsen, {\it On $k$th order embeddings of $K3$ surfaces and Enriques surfaces}, Manuscr. Math. {\bf 104} (2001), 211--237 .
   
  
\bibitem{klvan} A.~L.~Knutsen, A.~F.~Lopez, {\it A sharp vanishing theorem for line bundles on $K3$ or Enriques surfaces}, Proc. Amer. Math. Soc. {\bf  135} (2007) 3495--3498.

  

\bibitem{KL1} A.~L.~Knutsen, A.~F.~Lopez, {\it Brill-Noether theory for curves on Enriques surfaces, I: the positive cone and gonality},  Math. Zeit. {\bf 261}(2009), 659--690. 

 
  \bibitem{KLpn} A.~L.~Knutsen, A.~F.~Lopez, {\it Projective normality and the generation of the
  ideal of an Enriques surface}, Adv. Geom. {\bf 15} (2015), 339--348.





\bibitem{K} V.S.~Kulikov, {\it Degenerations of $K3$ surfaces and Enriques surfaces},	AMS Mathematics of the USSR-Izvestiya {\bf 11} (1977), 957--989. 

 

\bibitem{M} D.R.~Morrison, {\it Semistable degenerations of Enriques' and hyperelliptic surfaces}, Duke Math. J. {\bf 48} (1981), no. 1, 197--249.

\bibitem{M2} D.R.~Morrison, {\it The Clemens-Schmid exact sequence and applications}, in {\it Topics
in transcendental algebraic geometry} (Princeton, N.J., 1981/1982), Ann. of Math.
Stud. {\bf 106}, Princeton Univ. Press, Princeton, NJ, 1984,  101--119.


  
  

\bibitem{PP}  U.~Persson, H.~Pinkham, {\it Degeneration of surfaces with trivial canonical bundle}, Ann. of Math. {\bf 113} (1981), 45--66.

  
\bibitem{Rei}  I.~Reider, {\it 
Vector bundles of rank $2$ and linear systems on algebraic surfaces},
Ann. of Math. {\bf  127} (1988), 309--316.
  
 \bibitem{ser} E.~Sernesi, {\it Deformations of Algebraic Schemes},
 Grundlehren der mathematischen Wissenschaften {\bf 334}. Springer-Verlag, Berlin, Heidelberg, New York, 2006.



\bibitem{Shah} J.~Shah, {\it  Projective degenerations of Enriques' surfaces}, Math. Ann. {\bf 256} (1981), 475--495. 





\bibitem{Sz} T.~Szemberg, {\it On positivity of line bundles on {E}nriques surfaces}, Trans. Amer. Math. Soc. {\bf 353} (2001), 4963--4972.

  
\bibitem{Ve} A.~Verra, {\it A short proof of the unirationality of $\A_5$},  Indagationes Math. {\bf 46} (1984), 339--355. 

\bibitem{Vie} E.~Viehweg, {\it Quasi-projective Moduli for Polarized Manifolds}, 
Mathematics Algebra
Ergebnisse der Mathematik und ihrer Grenzgebiete. 3. Folge, Vol. {\bf 30} (1995), Springer-Verlag Berlin Heidelberg. 



  

\end{thebibliography}
\end{document}